\theoremstyle{theorem}
\newtheorem{theorem}{Theorem}
\theoremstyle{theorem}
\newtheorem{corollary}[theorem]{Corollary}
\theoremstyle{theorem}
\newtheorem{lemma}[theorem]{Lemma}
\theoremstyle{definition}
\begin{document}

\title[On a class of diagonal equations]{On a class of diagonal equations\\ over finite fields}

\author{Ioulia N. Baoulina}

\dedicatory{To the memory of my first teacher in number theory, Elena B. Gladkova (1953~--~2015)}

\address{Department of Mathematics, Moscow State Pedagogical University, Krasnoprudnaya str. 14, Moscow 107140, Russia}
\email{jbaulina@mail.ru}

\date{}

\maketitle

\begin{abstract}
Using properties of Gauss and Jacobi sums, we derive explicit formulas for the number of solutions to a diagonal equation of the form $x_1^{2^m}+\dots+x_n^{2^m}\!=0$ over a finite field of characteristic $p\equiv\pm 3\!\pmod{8}$.  All of the evaluations are effected in terms of parameters occurring in quadratic partitions of some powers of~$p$.
\end{abstract}

\keywords{{\it Keywords}: Equation over a finite field; diagonal equation; Gauss sum; Jacobi sum.}

\subjclass{Mathematics Subject Classification 2010: 11G25, 11T24}

\thispagestyle{empty}

\section{Introduction}

Let $\mathbb F_q$ be a finite field of characteristic~$p>2$ with $q=p^s$ elements,  $\eta$ be the quadratic character on $\mathbb F_q$  ($\eta(x)=+1, -1, 0$ according as $x$ is a square, a non-square or zero in $\mathbb F_q$), and $\mathbb F_q^*=\mathbb F_q^{}\setminus\{0\}$. A diagonal equation over $\mathbb F_q$ is an equation of the type
\begin{equation}
\label{eq1}
a_1^{}x_1^{d_1}+\dots+a_n^{}x_n^{d_n}=b,
\end{equation}
where $a_1,\dots,a_n\in\mathbb F_q^*$, $b\in\mathbb F_q$ and $d_1,\dots,d_n$ are positive integers.  As $x_j$ runs through all elements of $\mathbb F_q$, $x_j^{d_j}$ runs through the same elements as $x_j^{\gcd(d_j,q-1)}$ does with the same multiplicity. Therefore, without loss of generality, we may assume that $d_j$ divides $q-1$ for all $j$. Denote by  $N[a_1^{}x_1^{d_1}+\dots+a_n^{}x_n^{d_n}=b]$ the number of solutions to \eqref{eq1} in $\mathbb F_q^n$.

The pioneering work on diagonal equations has been done by Weil~\cite{W}, who expressed the number of solutions in terms of Gauss sums. For certain choices of coefficients $a_1,\dots,a_n,b$, exponents $d_1,\dots,d_n$ and finite fields $\mathbb F_q$, the explicit formulas for the number of solutions can be deduced from Weil's expression, see \cite{B3, BEW, CS, J, LN, MP, S, SY, W1, W2} for some results in this direction. However, in general, it is a difficult task to determine $N[a_1^{}x_1^{d_1}+\dots+a_n^{}x_n^{d_n}=b]$.

In this paper, we consider a diagonal equation of the form
\begin{equation}
\label{eq2}
x_1^{2^m}+\dots+x_n^{2^m}=0,
\end{equation}
where $m$ is a positive integer with $2^m\mid(q-1)$. It is well known (see \cite[Theorem~10.5.1]{BEW} or \cite[Theorems~6.26 and 6.27]{LN}) that
$$
N[x_1^2+\dots+x_n^2=0]=\begin{cases}
q^{n-1}+\eta((-1)^{n/2})q^{(n-2)/2}(q-1)&\text{if $n$ is even,}\\
q^{n-1}&\text{if $n$ is odd.}
\end{cases}
$$
Moreover, if $p\equiv 3\!\pmod{4}$ and $2\mid s$, then it follows from the result of Wolfmann~\cite[Corollary~4]{W1} that
$$
N[x_1^4+\dots+x_n^4=0]=q^{n-1}+(-1)^{((s/2)-1)n}q^{(n-2)/2}(q-1)\cdot\frac{3^n+(-1)^n\cdot 3}4.
$$
Further, for any $m$ with $2^m\mid(q-1)$, it is not hard to show that
$$
N[x_1^{2^m}+x_2^{2^m}=0]=\begin{cases}
2^m(q-1)+1&\text{if $2^{m+1}\mid(q-1)$,}\\
1&\text{if $2^m\parallel(q-1)$.}
\end{cases}
$$

The goal of this paper is to determine explicitly $N[x_1^{2^m}+\dots+x_n^{2^m}=0]$ for an arbitrary $n$ in the case when $p\equiv\pm 3\!\pmod{8}$ and
$$
m\ge\begin{cases}
3&\text{if $p\equiv\hphantom{-}3\!\!\pmod{8}$,}\\
2&\text{if $p\equiv -3\!\!\pmod{8}$.}
\end{cases}
$$
In Section~\ref{s3}, we treat the case $p\equiv 3\!\pmod{8}$. The main results of this section are Theorems~\ref{t1} and \ref{t2}, in which we cover the cases $2^{m+1}\mid(q-1)$ and $2^m\parallel(q-1)$, respectively. Our main results in Section~\ref{s4} are Theorems~\ref{t3} and \ref{t4}, in which we deal with the case $p\equiv-3\!\pmod{8}$. All of the evaluations in Sections \ref{s3} and \ref{s4} are effected in terms of parameters occurring in quadratic partitions of some powers of~$p$. The results of numerical experiments are presented in Section~\ref{s5}. Applications of our results to some other diagonal equations are discussed in Section~\ref{s6}.

\section{Preliminary lemmas}
\label{s2}

Let, as usual, $\zeta_k=\exp(2\pi i/k)$. Let $\psi$ be a nontrivial character on $\mathbb F_q$. We extend $\psi$ to all of $\mathbb F_q$ by setting $\psi(0)=0$. The Gauss sum $G(\psi)$ over $\mathbb F_q$ is defined by
$$
G(\psi)=\sum_{x\in\mathbb F_q}\psi(x)\zeta_p^{{\mathop{\rm
Tr}\nolimits}(x)},
$$
where ${\mathop{\rm
Tr}\nolimits}(x)=x+x^p+x^{p^2}+\dots+x^{p^{s-1}}$ is the trace of
$x$ from $\mathbb F_q$ to $\mathbb F_p$. The next lemma gives an expression for $N[a_1^{}x_1^{d_1}+\dots+a_n^{}x_n^{d_n}=0]$ in terms of Gauss sums.
\begin{lemma}
\label{l1}
Let $a_1,\dots,a_n\in\mathbb F_q^*$, $d_1,\dots,d_n$ be positive integers, $d_j$ divides $q-1$ for all $j$, and let $\psi_j$ be a character of order $d_j$ on $\mathbb F_q$, $1\le j\le n$. Then
\begin{align*}
N&[a_1^{}x_1^{d_1}+\dots+a_n^{}x_n^{d_n}=0]\\
&=q^{n-1}+\frac{q-1}q\sum_{\substack{1\le j_1\le d_1-1\\ \dots\\1\le j_n\le d_n-1\\(j_1/d_1)+\dots+(j_n/d_n)\in\mathbb Z}}\bar\psi_1^{j_1}(a_1)\cdots\bar\psi_n^{j_n}(a_n) G(\psi_1^{j_1})\cdots G(\psi_n^{j_n}).
\end{align*}
\end{lemma}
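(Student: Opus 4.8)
The plan is to derive the identity from the standard additive-character argument, reducing the count to one-variable exponential sums and then to Gauss sums. Using the orthogonality relation $\frac1q\sum_{t\in\mathbb F_q}\zeta_p^{\mathrm{Tr}(tc)}$, which equals $1$ if $c=0$ and $0$ otherwise, one writes
$$
N[a_1^{}x_1^{d_1}+\dots+a_n^{}x_n^{d_n}=0]=\frac1q\sum_{t\in\mathbb F_q}\ \prod_{j=1}^n\ \sum_{x_j\in\mathbb F_q}\zeta_p^{\mathrm{Tr}(t a_j x_j^{d_j})}.
$$
The term $t=0$ contributes $q^n/q=q^{n-1}$, which is the leading term of the asserted formula; it remains to treat the terms with $t\ne0$.

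First I would evaluate, for fixed $t\ne0$, each inner sum $S_j(t):=\sum_{x_j\in\mathbb F_q}\zeta_p^{\mathrm{Tr}(t a_j x_j^{d_j})}$. Since $d_j\mid(q-1)$ and $\psi_j$ has order $d_j$, the characters $\psi_j^0,\psi_j^1,\dots,\psi_j^{d_j-1}$ are exactly those whose order divides $d_j$, so for $u\in\mathbb F_q^*$ the number of $x_j\in\mathbb F_q$ with $x_j^{d_j}=u$ equals $\sum_{k=0}^{d_j-1}\psi_j^k(u)$, while for $u=0$ there is the single solution $x_j=0$. Splitting $S_j(t)$ accordingly,
$$
S_j(t)=1+\sum_{k=0}^{d_j-1}\ \sum_{u\in\mathbb F_q^*}\psi_j^k(u)\,\zeta_p^{\mathrm{Tr}(t a_j u)}.
$$
The $k=0$ summand equals $\sum_{u\in\mathbb F_q^*}\zeta_p^{\mathrm{Tr}(t a_j u)}=-1$ and cancels the initial $1$; for $1\le k\le d_j-1$ the substitution $u\mapsto u/(t a_j)$ converts the inner sum into $\bar\psi_j^k(t a_j)\,G(\psi_j^k)=\bar\psi_j^k(t)\,\bar\psi_j^k(a_j)\,G(\psi_j^k)$. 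Hence $S_j(t)=\sum_{k=1}^{d_j-1}\bar\psi_j^k(t)\,\bar\psi_j^k(a_j)\,G(\psi_j^k)$.

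Next I would multiply $S_1(t),\dots,S_n(t)$, expand the product, and interchange summations so that the sum over $t\in\mathbb F_q^*$ is innermost, obtaining
$$
\frac1q\sum_{t\in\mathbb F_q^*}\prod_{j=1}^n S_j(t)
=\frac1q\sum_{\substack{1\le j_1\le d_1-1\\ \dots\\ 1\le j_n\le d_n-1}}
\Bigl(\prod_{k=1}^n\bar\psi_k^{j_k}(a_k)\,G(\psi_k^{j_k})\Bigr)
\sum_{t\in\mathbb F_q^*}\overline{\psi_1^{j_1}\cdots\psi_n^{j_n}}(t).
$$
By orthogonality of multiplicative characters the inner $t$-sum is $q-1$ if $\psi_1^{j_1}\cdots\psi_n^{j_n}$ is trivial and $0$ otherwise. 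Writing $\psi_j=g^{(q-1)/d_j}$ for a fixed generator $g$ of the (cyclic) character group of $\mathbb F_q^*$, we have $\psi_1^{j_1}\cdots\psi_n^{j_n}=g^{(q-1)\bigl((j_1/d_1)+\dots+(j_n/d_n)\bigr)}$, which is trivial precisely when $(j_1/d_1)+\dots+(j_n/d_n)\in\mathbb Z$.

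Substituting this back and adding the $t=0$ contribution yields the claimed identity. The computation is entirely routine; the only step that needs care is this last one, namely matching the triviality of the product character $\psi_1^{j_1}\cdots\psi_n^{j_n}$ with the integrality condition $(j_1/d_1)+\dots+(j_n/d_n)\in\mathbb Z$, and verifying that, once the principal-character terms have cancelled, the indices that survive run exactly over $1\le j_k\le d_k-1$ for each $k$.
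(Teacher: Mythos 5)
Your derivation is the standard additive-character argument, which is exactly the route of the sources the paper cites for this lemma (the paper gives no proof of its own, only the reference to \cite[Theorems~10.3.1 and 10.4.2]{BEW} and \cite[Equation~(6.14)]{LN}), and everything up to and including the identity $S_j(t)=\sum_{k=1}^{d_j-1}\bar\psi_j^k(ta_j)G(\psi_j^k)$ and the orthogonality step in $t$ is correct.

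The gap sits precisely in the step you yourself flag as the one needing care. Orthogonality selects the tuples with $\psi_1^{j_1}\cdots\psi_n^{j_n}$ trivial, and to convert this into the condition $(j_1/d_1)+\dots+(j_n/d_n)\in\mathbb Z$ you write $\psi_j=g^{(q-1)/d_j}$; but the hypothesis only gives $\psi_j$ of order $d_j$, i.e.\ $\psi_j=g^{c_j(q-1)/d_j}$ with $\gcd(c_j,d_j)=1$, so triviality of the product means $c_1j_1/d_1+\dots+c_nj_n/d_n\in\mathbb Z$, which is a genuinely different index set in general. The discrepancy is not cosmetic: take $q=5$, $n=2$, $d_1=d_2=4$, $a_1=a_2=1$, $\psi_1=g$, $\psi_2=g^3$ (both of order $4$). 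The condition $4\mid(j_1+j_2)$ selects $(1,3),(2,2),(3,1)$ and the corresponding sum is $G(g)^2+G(\bar g)^2+G(\eta)^2=5-2\sqrt5$, which does not reproduce the true count $N[x_1^4+x_2^4=0]=1$; the correct condition $\psi_1^{j_1}\psi_2^{j_2}$ trivial selects $(1,1),(2,2),(3,3)$ and gives $-5$, as required. So what your argument actually proves is the lemma for a coherent choice of the $\psi_j$ (all powers of one fixed character, e.g.\ $\psi_j=g^{(q-1)/d_j}$, or all $\psi_j$ equal, which is how the lemma is applied in the paper with $\psi_1=\dots=\psi_n=\lambda$ and $d_j=2^m$); read with arbitrary order-$d_j$ characters, the displayed formula is itself sensitive to the choice. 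Either state the sum with the condition ``$\psi_1^{j_1}\cdots\psi_n^{j_n}$ trivial'' and then observe it reduces to the integrality condition for a compatible choice, or make the compatible choice of the $\psi_j$ explicit before the last step; as written, the proof silently strengthens the hypothesis.
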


\begin{proof}
See \cite[Theorems~10.3.1 and 10.4.2]{BEW} or \cite[Equation~(6.14)]{LN}.
\end{proof}

We recall some properties of Gauss sums, which will be used throughout this paper.
\begin{lemma}
\label{l2}
Let $\psi$ be a nontrivial character on $\mathbb F_q$. Then
\begin{itemize}
\item[\textup{(a)}] 
$G(\psi)G(\bar\psi)=\psi(-1)q$;
\item[\textup{(b)}] 
$G(\psi)=G(\psi^p)$.
\end{itemize}
\end{lemma}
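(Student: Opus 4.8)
The plan is to establish both identities directly from the definition of the Gauss sum, using only two elementary facts: the orthogonality relation $\sum_{z\in\mathbb F_q}\zeta_p^{\mathrm{Tr}(z)}=0$, which holds because $\mathrm{Tr}\colon\mathbb F_q\to\mathbb F_p$ is a surjective $\mathbb F_p$-linear map and $\sum_{u\in\mathbb F_p}\zeta_p^u=0$; and the fact that $\sum_{x\in\mathbb F_q^*}\psi(x)=0$ whenever $\psi$ is a nontrivial multiplicative character.

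For part (a), I would start from
$$
G(\psi)G(\bar\psi)=\sum_{x,y\in\mathbb F_q^*}\psi(x)\bar\psi(y)\,\zeta_p^{\mathrm{Tr}(x+y)},
$$
where the terms with $x=0$ or $y=0$ disappear because $\psi(0)=\bar\psi(0)=0$. Writing $x=ty$ with $t\in\mathbb F_q^*$ for each fixed $y\in\mathbb F_q^*$ turns the summand into $\psi(t)\,\zeta_p^{\mathrm{Tr}(y(t+1))}$; summing over $y$ first gives $q-1$ when $t=-1$ and, by the orthogonality relation (since $y(t+1)$ then runs over $\mathbb F_q^*$), $-1$ when $t\ne-1$. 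Hence
$$
G(\psi)G(\bar\psi)=\psi(-1)(q-1)-\sum_{\substack{t\in\mathbb F_q^*\\ t\ne-1}}\psi(t)=\psi(-1)(q-1)-\bigl(0-\psi(-1)\bigr)=\psi(-1)q.
$$

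For part (b), I would use that the Frobenius map $x\mapsto x^p$ is a bijection of $\mathbb F_q$ that fixes $\mathbb F_p$ pointwise, so $\mathrm{Tr}(x^p)=\mathrm{Tr}(x)$ for all $x$; and since $\psi$ is multiplicative, $\psi^p(x)=\psi(x)^p=\psi(x^p)$. Therefore
$$
G(\psi^p)=\sum_{x\in\mathbb F_q}\psi(x^p)\,\zeta_p^{\mathrm{Tr}(x^p)}=\sum_{y\in\mathbb F_q}\psi(y)\,\zeta_p^{\mathrm{Tr}(y)}=G(\psi),
$$
the middle step being the substitution $y=x^p$.

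Both statements are classical, so I do not expect a genuine obstacle: the only points requiring care are the harmless discarding of the $x=0$, $y=0$ terms in (a) and the verification that $x\mapsto ty$ (for fixed nonzero $y$) and $x\mapsto x^p$ are honest bijections of the relevant sets. The whole argument should occupy only a few lines.
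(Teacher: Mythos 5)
Your proof is correct: part (a) via the substitution $x=ty$ together with the orthogonality of $\zeta_p^{\mathrm{Tr}(\cdot)}$, and part (b) via multiplicativity of $\psi$ and Frobenius-invariance of the trace, are complete and accurate (the only implicit points, that $\psi(y)\bar\psi(y)=1$ for $y\neq 0$ and that $\psi^p$ is again nontrivial because $\gcd(p,q-1)=1$, are harmless). The paper gives no argument of its own here, simply citing Berndt--Evans--Williams and Lidl--Niederreiter, and the proofs in those references are essentially the ones you wrote, so your route coincides with the standard one the paper defers to.
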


\begin{proof}
See \cite[Theorem~1.1.4(a, d)]{BEW} or \cite[Theorem~5.12(iv,v)]{LN}.
\end{proof}

The evaluation of the quadratic Gauss sum $G(\eta)$ is given in the following lemma.
\begin{lemma}
\label{l3}
We have
$$
G(\eta)=
\begin{cases}
(-1)^{s-1}q^{1/2}&\text{if\,\, $p\equiv 1\!\!\pmod{4}$,}\\
(-1)^{s-1}i^s q^{1/2}&\text{if\,\, $p\equiv 3\!\!\pmod 4$.}
\end{cases}
$$
\end{lemma}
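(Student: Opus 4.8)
The plan is to reduce everything to the classical quadratic Gauss sum over the prime field $\mathbb F_p$ by invoking the Davenport--Hasse lifting relation. First I would record that the quadratic character $\eta$ on $\mathbb F_q$ is the lift of the quadratic character $\eta_p$ on $\mathbb F_p$: if $N=N_{\mathbb F_q/\mathbb F_p}$ denotes the norm, then $N$ maps $\mathbb F_q^*$ onto $\mathbb F_p^*$, so $\eta_p\circ N$ is a nontrivial character on $\mathbb F_q$ of order $2$, and since $\mathbb F_q^*$ is cyclic this forces $\eta_p\circ N=\eta$. The Davenport--Hasse theorem (see \cite[Theorem~11.5.2]{BEW} or \cite[Theorem~5.14]{LN}) then gives at once
$$
G(\eta)=(-1)^{s-1}G(\eta_p)^s.
$$

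Next I would insert Gauss's classical evaluation of the quadratic Gauss sum on $\mathbb F_p$, namely $G(\eta_p)=p^{1/2}$ if $p\equiv 1\!\pmod 4$ and $G(\eta_p)=i\,p^{1/2}$ if $p\equiv 3\!\pmod 4$ (see \cite[Theorem~1.2.4]{BEW}). Substituting into the previous display yields $G(\eta)=(-1)^{s-1}\bigl(p^{1/2}\bigr)^s=(-1)^{s-1}q^{1/2}$ when $p\equiv 1\!\pmod 4$, and $G(\eta)=(-1)^{s-1}\bigl(i\,p^{1/2}\bigr)^s=(-1)^{s-1}i^s q^{1/2}$ when $p\equiv 3\!\pmod 4$, which is exactly the asserted formula.

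It is worth noting where the actual difficulty lies. The modulus is cheap: since $\bar\eta=\eta$, Lemma~\ref{l2}(a) gives $G(\eta)^2=G(\eta)G(\bar\eta)=\eta(-1)q$, which already pins $G(\eta)$ down up to sign once one knows whether $-1$ is a square in $\mathbb F_q$, and one checks this is consistent with the claim since $\bigl((-1)^{s-1}i^s q^{1/2}\bigr)^2=(-1)^s q=\eta(-1)q$ for $p\equiv 3\!\pmod 4$. The genuine obstacle is determining the precise sign (and, for $p\equiv 3\!\pmod 4$ with $s$ odd, the precise fourth root of unity), and this is essentially Gauss's celebrated sign determination over $\mathbb F_p$; once that input is granted, the ascent from $\mathbb F_p$ to $\mathbb F_q$ via Davenport--Hasse is routine. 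Alternatively one could evaluate $G(\eta)$ directly over $\mathbb F_q$, but passing through the prime field is the most economical route and isolates the one hard classical fact being used.
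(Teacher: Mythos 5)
Your proof is correct, and it is essentially the same argument as the paper relies on: the paper simply cites \cite[Theorem~11.5.4]{BEW} and \cite[Theorem~5.15]{LN}, and those references establish the formula exactly as you do, by lifting the quadratic character from $\mathbb F_p$ to $\mathbb F_q$ via the Davenport--Hasse relation (the paper's Lemma~\ref{l8}) and inserting Gauss's classical evaluation of the prime-field quadratic Gauss sum. Your identification of $\eta$ with the lift of $\eta_p$ and the final substitution are both sound, so there is nothing to add.
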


\begin{proof}
See \cite[Theorem~11.5.4]{BEW} or \cite[Theorem~5.15]{LN}.
\end{proof}

The next lemma is a particular case of the Stickelberger theorem.

\begin{lemma}
\label{l4}
Let $p\equiv 3\!\pmod{8}$, $2\mid s$ and $\psi$ be a biquadratic character on $\mathbb F_q$. Then $G(\psi)=-q^{1/2}$.
\end{lemma}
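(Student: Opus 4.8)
The plan is to carry out an elementary reduction with Lemma~\ref{l2} and then appeal to Stickelberger's theorem for the single item --- a sign --- that the purely multiplicative identities cannot supply. First I would record two consequences of the hypotheses. Since $2\mid s$ and $p\equiv 3\pmod 8$ (so $p^{2}\equiv 1\pmod 8$), we get $q=(p^{2})^{s/2}\equiv 1\pmod 8$; hence $-1$ is a fourth power in $\mathbb F_{q}^{*}$, and as $\psi$ has order $4$ this forces $\psi(-1)=1$. Also $p\equiv 3\pmod 4$ gives $\psi^{p}=\psi^{3}=\bar\psi$. Now Lemma~\ref{l2}(b) yields $G(\psi)=G(\psi^{p})=G(\bar\psi)$, and Lemma~\ref{l2}(a) gives
$$
G(\psi)^{2}=G(\psi)G(\bar\psi)=\psi(-1)\,q=q,
$$
so that $G(\psi)=\pm q^{1/2}$.

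It remains to decide the sign, and this is the heart of the matter: every purely multiplicative relation among Gauss sums is invariant under $G(\psi)\mapsto -G(\psi)$, so the sign can only be read off from the prime-ideal factorization of $G(\psi)$, that is, from Stickelberger's theorem. I would invoke the classical evaluation of ``pure'' Gauss sums (see \cite{BEW}), available here precisely because $p\equiv -1\pmod 4$: writing $s=2t$, it gives $G(\psi)=(-1)^{t-1}\bigl((-1)^{(p+1)/4}\bigr)^{t}q^{1/2}$. Since $p\equiv 3\pmod 8$, the integer $(p+1)/4$ is odd, whence $(-1)^{t(p+1)/4}=(-1)^{t}$ and $G(\psi)=(-1)^{t-1}(-1)^{t}q^{1/2}=-q^{1/2}$, independently of the parity of $t$.

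The expected obstacle is precisely this last step. One can shrink it --- though not remove it --- by the Hasse--Davenport lifting relation: every quartic character of $\mathbb F_{q}$ is the lift of a quartic character $\psi_{0}$ of $\mathbb F_{p^{2}}$ (possible because $2\mid s$), and $-G(\psi)=\bigl(-G(\psi_{0})\bigr)^{s/2}$, so the whole statement reduces to the single evaluation $G(\psi_{0})=-p$ over $\mathbb F_{p^{2}}$ (which for $p=3$ can be confirmed by a short direct computation of the Gauss sum over $\mathbb F_{9}$). But that base case is itself an instance of Stickelberger's theorem, so in any case that theorem cannot be bypassed.
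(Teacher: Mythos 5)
Your proposal is correct and rests on the same foundation as the paper, whose entire proof is a citation of Stickelberger's evaluation of pure Gauss sums (\cite[Theorem~11.6.3]{BEW}): your formula $G(\psi)=(-1)^{t-1}\bigl((-1)^{(p+1)/4}\bigr)^{t}q^{1/2}$ with $s=2t$ is exactly that result, and the sign analysis using $(p+1)/4$ odd for $p\equiv 3\pmod 8$ is right. The preliminary reduction via Lemma~\ref{l2} to $G(\psi)=\pm q^{1/2}$ and the Davenport--Hasse descent to $\mathbb F_{p^2}$ are correct but, as you note yourself, do not remove the appeal to Stickelberger, so the argument is essentially the paper's.
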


\begin{proof}
See \cite[Theorem~11.6.3]{BEW}.
\end{proof}

The following lemma is a special case of the Davenport-Hasse product formula for Gauss sums.
\begin{lemma}
\label{l5}
Let $\psi$ be a nontrivial character on $\mathbb F_q$
with $\psi\ne\eta$. Then
$$
G(\psi)G(\psi\eta)=\bar\psi(4)G(\psi^2)G(\eta).
$$
\end{lemma}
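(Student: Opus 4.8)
The plan is to deduce this identity from the standard relations between Gauss and Jacobi sums; this is essentially the $m=2$ instance of the Hasse--Davenport product formula, but it can be obtained directly and self-containedly. Recall that for nontrivial characters $\chi,\lambda$ on $\mathbb F_q$ with $\chi\lambda$ also nontrivial one has $J(\chi,\lambda)=G(\chi)G(\lambda)/G(\chi\lambda)$, where $J(\chi,\lambda)=\sum_{x\in\mathbb F_q}\chi(x)\lambda(1-x)$ (see \cite[\S2.1]{BEW} or \cite[Theorem~5.21]{LN}). Since $\psi$ is nontrivial and $\psi\ne\eta$, both $\psi^2$ and $\psi\eta$ are nontrivial, so this applies to the pairs $(\psi,\psi)$ and $(\psi,\eta)$, giving $J(\psi,\psi)=G(\psi)^2/G(\psi^2)$ and $J(\psi,\eta)=G(\psi)G(\eta)/G(\psi\eta)$.

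The key elementary step is the identity $J(\psi,\psi)=\bar\psi(4)\,J(\psi,\eta)$. To prove it, write $J(\psi,\psi)=\sum_{x\in\mathbb F_q}\psi\bigl(x(1-x)\bigr)$ and apply the substitution $x\mapsto (y+1)/2$ (legitimate since $p>2$), using $x(1-x)=\tfrac14\bigl(1-(2x-1)^2\bigr)$, to obtain $J(\psi,\psi)=\bar\psi(4)\sum_{y\in\mathbb F_q}\psi(1-y^2)$. Then replace the sum over $y$ by a sum over $t=y^2$, each value $t$ being attained $1+\eta(t)$ times; the piece $\sum_t\psi(1-t)$ vanishes because $\psi$ is nontrivial, while the piece $\sum_t\eta(t)\psi(1-t)$ equals $J(\eta,\psi)=J(\psi,\eta)$. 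This yields the claimed relation.

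Combining the three identities gives $G(\psi)^2/G(\psi^2)=\bar\psi(4)\,G(\psi)G(\eta)/G(\psi\eta)$, and clearing denominators (all the Gauss sums involved are nonzero, having absolute value $q^{1/2}$ by Lemma~\ref{l2}(a)) yields $G(\psi)G(\psi\eta)=\bar\psi(4)G(\psi^2)G(\eta)$, as required.

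I expect the only delicate points to be bookkeeping rather than substance. One must check that the Gauss--Jacobi conversion $J(\chi,\lambda)=G(\chi)G(\lambda)/G(\chi\lambda)$ is valid for each pair used --- this is exactly where the hypotheses ``$\psi$ nontrivial'' and ``$\psi\ne\eta$'' enter, ensuring $\psi^2\ne\varepsilon$ and $\psi\eta\ne\varepsilon$ --- and one must keep track of the convention $\psi(0)=0$ when passing between the sums over $x$, over $y$, and over $t$; in each case the offending terms contribute $0$, so the manipulations go through unchanged. Alternatively, the statement may simply be quoted as the special case of the Hasse--Davenport product formula for Gauss sums (see \cite{BEW}).
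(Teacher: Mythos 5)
Your argument is correct. Note, however, that the paper does not prove this statement at all: it is quoted as a special case of the Davenport--Hasse product formula, with a citation to \cite[Theorem~11.3.5]{BEW} and \cite[Corollary~5.29]{LN}, exactly as you suggest in your closing sentence. What you supply instead is a self-contained derivation, and it checks out: the conversion $J(\chi,\lambda)=G(\chi)G(\lambda)/G(\chi\lambda)$ applies to the pairs $(\psi,\psi)$ and $(\psi,\eta)$ precisely because $\psi\ne\varepsilon$ and $\psi\ne\eta$ force $\psi^2$ and $\psi\eta$ to be nontrivial; the substitution $y=2x-1$ with $x(1-x)=\tfrac14\bigl(1-(2x-1)^2\bigr)$ is legitimate for $p>2$; the count $1+\eta(t)$ of square roots of $t$ is right (including $t=0$ under the convention $\eta(0)=0$); $\sum_t\psi(1-t)=0$ by nontriviality of $\psi$; and $J(\eta,\psi)=J(\psi,\eta)$ via $t\mapsto 1-t$. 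Clearing denominators is justified since $|G(\chi)|=q^{1/2}$ for nontrivial $\chi$, which follows from Lemma~\ref{l2}(a). So your route buys a short, elementary, character-sum proof of exactly the special case needed (a two-character instance of the general product formula, whose full proof in \cite{BEW} is considerably heavier), at the cost of invoking the two-variable Jacobi sum $J(\chi,\lambda)$, which the paper itself never introduces; the paper's choice to cite the literature keeps its preliminaries leaner. Either path is acceptable here.
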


\begin{proof}
See \cite[Theorem~11.3.5]{BEW} or \cite[Corollary~5.29]{LN}.
\end{proof}

Let $\psi$ be a nontrivial character on $\mathbb F_q$. The Jacobi sum $J(\psi)$ over $\mathbb F_q$ is defined by
$$
J(\psi)=\sum_{x\in\mathbb F_q}\psi(x)\psi(1-x).
$$
An important relationship between Jacobi sums and Gauss sums is presented in the next lemma.

\begin{lemma}
\label{l6}
Let $\psi$ be a nontrivial character on $\mathbb F_q$
with $\psi\ne\eta$. Then
$$
G(\psi)^2=G(\psi^2)J(\psi).
$$
\end{lemma}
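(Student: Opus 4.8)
The plan is to expand $G(\psi)^2$ as a double sum over $\mathbb F_q\times\mathbb F_q$, sort the terms according to the value of $x+y$, and recognize the two resulting pieces as a multiple of $J(\psi)$ times $G(\psi^2)$. Concretely, I would begin from
$$
G(\psi)^2=\sum_{x\in\mathbb F_q}\sum_{y\in\mathbb F_q}\psi(x)\psi(y)\,\zeta_p^{\mathop{\rm Tr}(x+y)}
=\sum_{x,y\in\mathbb F_q}\psi(xy)\,\zeta_p^{\mathop{\rm Tr}(x+y)},
$$
and split the outer summation into the contribution of pairs with $x+y=0$ and those with $x+y\ne 0$.

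For the first piece, setting $y=-x$ gives $\sum_{x\in\mathbb F_q}\psi(-x^2)=\psi(-1)\sum_{x\in\mathbb F_q}\psi^2(x)$. Here I would use that $\psi$ is nontrivial and $\psi\ne\eta$, so $\psi^2$ is again a nontrivial character; hence $\sum_{x\in\mathbb F_q}\psi^2(x)=0$ and this piece vanishes. (This is the one place where the hypothesis $\psi\ne\eta$ is essential, which is reassuring since the statement excludes exactly that case.)

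For the second piece, I would parametrize each pair $(x,y)$ with $x+y=s\ne 0$ by writing $x=su$, $y=s(1-u)$ with $u$ ranging over all of $\mathbb F_q$; this is a bijection for each fixed $s\ne0$. Then $\psi(xy)=\psi(s^2)\psi(u(1-u))=\psi^2(s)\,\psi(u)\psi(1-u)$ and $\mathop{\rm Tr}(x+y)=\mathop{\rm Tr}(s)$, so the double sum factors as
$$
\Bigl(\sum_{s\in\mathbb F_q^*}\psi^2(s)\,\zeta_p^{\mathop{\rm Tr}(s)}\Bigr)\Bigl(\sum_{u\in\mathbb F_q}\psi(u)\psi(1-u)\Bigr)=G(\psi^2)\,J(\psi),
$$
using the convention $\psi^2(0)=0$ to extend the first sum to all of $\mathbb F_q$. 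Combining the two pieces yields $G(\psi)^2=G(\psi^2)J(\psi)$.

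I do not anticipate a genuine obstacle here: the argument is a short manipulation of finite-field character sums. The only point requiring a moment's care is the change of variables in the $x+y\ne0$ range, where one must check that $(u,s)\mapsto(su,s(1-u))$ is indeed a bijection onto $\{(x,y):x+y=s\}$ for each nonzero $s$, and that the factorization of $\psi(xy)$ as $\psi^2(s)\psi(u(1-u))$ is legitimate even when one of $u$, $1-u$, or $s$ is zero (it is, since both sides are then $0$, with the extended convention $\psi(0)=0$).
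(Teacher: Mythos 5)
Your argument is correct, and the only care points you flag (the bijection $(u,s)\mapsto(su,s(1-u))$ and the $\psi(0)=0$ convention when $u\in\{0,1\}$, plus the use of $\psi\ne\eta$ to make $\psi^2$ nontrivial so the $x+y=0$ diagonal vanishes) are handled properly. The paper itself gives no proof but simply cites \cite[Theorem~2.1.3(a)]{BEW} and \cite[Theorem~5.21]{LN}, and your computation is essentially the standard argument given in those references, so there is nothing further to reconcile.
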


\begin{proof}
See \cite[Theorem~2.1.3(a)]{BEW} or \cite[Theorem~5.21]{LN}.
\end{proof}

Let $\psi$ be a character on $\mathbb F_q$. The lift $\psi'$ of
the character $\psi$ from $\mathbb F_{q^{\vphantom{r}}}$ to the
extension field $\mathbb F_{q^r}$ is given by
$$
\psi'(x)=\psi({\mathop{\rm N}}_{\mathbb F_{q^r}/\mathbb
F_{q^{\vphantom{r}}}}(x)), \qquad x\in\mathbb F_{q^r},
$$
where ${\mathop{\rm N}}_{\mathbb F_{q^r}/\mathbb
F_{q^{\vphantom{r}}}}(x)=x\cdot x^q\cdot x^{q^2}\cdots
x^{q^{r-1}}=x^{(q^r-1)/(q-1)}$ is the norm of $x$ from
$\mathbb F_{q^r}$ to $\mathbb F_{q^{\vphantom{r}}}$. The basic properties of the lift $\psi'$ of $\psi$ from $\mathbb F_{q^{\vphantom{r}}}$ to $\mathbb F_{q^r}$  are recorded in the next lemma.

\begin{lemma}
\label{l7} 
Let $\psi$ be a character on $\mathbb
F_{q^{\vphantom{r}}}$ and let $\psi'$ denote the lift of $\psi$
from $\mathbb F_{q^{\vphantom{r}}}$ to $\mathbb F_{q^r}$. Then
\begin{itemize}
\item[\textup{(a)}] 
$\psi'$ is a character on $\mathbb F_{q^r}$;
\item[\textup{(b)}]
a character $\lambda$ on $\mathbb F_{q^r}$ equals the lift $\psi'$ of some character $\psi$ on $\mathbb F_q$ if and only if the order of $\lambda$ divides $q-1$;
\item[\textup{(c)}]
$\psi'$ and $\psi$ have the same order.
\end{itemize}
\end{lemma}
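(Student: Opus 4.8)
The plan is to deduce all three parts from a single structural fact: the norm map $N\colon\mathbb F_{q^r}^*\to\mathbb F_q^{*}$ is a \emph{surjective} group homomorphism whose kernel has order $(q^r-1)/(q-1)$. I would establish this first: from the formula $N(x)=x^{(q^r-1)/(q-1)}$ it is clear that $N$ is a homomorphism, and since $\mathbb F_{q^r}^*$ is cyclic of order $q^r-1$ and the exponent $e:=(q^r-1)/(q-1)$ divides $q^r-1$, the image of the $e$-th power map is the subgroup of order $(q^r-1)/e=q-1$, i.e.\ all of $\mathbb F_q^{*}$; the kernel then has order $e$.

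Part (a) is then immediate: on $\mathbb F_{q^r}^*$ the lift $\psi'$ is the composite $\psi\circ N$ of two homomorphisms $\mathbb F_{q^r}^*\to\mathbb F_q^*\to\mathbb C^*$, hence a character, while $\psi'(0)=\psi(N(0))=\psi(0)=0$ agrees with the standard convention. For part (c), I would observe that $(\psi')^{k}=\psi^{k}\circ N$ for every positive integer $k$, and since $N$ is onto $\mathbb F_q^*$ this power is trivial on $\mathbb F_{q^r}^*$ exactly when $\psi^{k}$ is trivial on $\mathbb F_q^*$; hence $\psi$ and $\psi'$ have the same order. This also settles the ``only if'' direction of (b), because the order of a lift then divides $|\mathbb F_q^*|=q-1$.

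For the ``if'' direction of (b), given a character $\lambda$ on $\mathbb F_{q^r}$ of order $d\mid q-1$, I would fix a generator $g$ of $\mathbb F_{q^r}^*$ and note that $\ker N$, being the unique subgroup of order $e=(q^r-1)/(q-1)$, equals $\langle g^{\,q-1}\rangle$; since $\lambda^{d}$ is trivial and $d\mid q-1$ we have $\lambda(g)^{q-1}=1$, so $\lambda$ is trivial on $\ker N$ and therefore factors through $\mathbb F_{q^r}^*/\ker N$, which $N$ identifies with $\mathbb F_q^*$. This produces a character $\psi$ on $\mathbb F_q$ with $\psi\circ N=\lambda$, i.e.\ $\psi'=\lambda$. (A counting variant would also work: by (c) the map $\psi\mapsto\psi'$ is an injective homomorphism, so the lifts form a subgroup of order $q-1$ in the cyclic character group of $\mathbb F_{q^r}^*$ consisting of characters of order dividing $q-1$, and the unique such subgroup has order $q-1$, so they coincide.)

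The argument has no genuinely hard step. The one place that needs a little care is the ``if'' half of (b) — verifying that every character of order dividing $q-1$ actually descends through the norm map — and the clean way through it is the quotient argument above, whose only prerequisites are the surjectivity of $N$ and the precise order of $\ker N$; the rest is unwinding definitions.
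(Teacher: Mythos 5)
Your proof is correct. Note, though, that the paper does not actually prove Lemma 7: it simply cites Theorem 11.4.4 of Berndt--Evans--Williams, \emph{Gauss and Jacobi Sums}, so the comparison is between your self-contained argument and a reference. Your route is the natural one and it works: the single structural input --- that $N(x)=x^{(q^r-1)/(q-1)}$ is a surjective homomorphism $\mathbb F_{q^r}^*\to\mathbb F_q^*$ with kernel of order $(q^r-1)/(q-1)$, which follows from cyclicity of $\mathbb F_{q^r}^*$ --- immediately gives (a) as a composite of homomorphisms, gives (c) because $(\psi')^k=\psi^k\circ N$ is trivial precisely when $\psi^k$ is trivial on the (full) image of $N$, and gives the ``only if'' half of (b). Your handling of the ``if'' half is also sound: a character $\lambda$ of order $d\mid q-1$ satisfies $\lambda(g)^{q-1}=1$ for a generator $g$, hence kills $\ker N=\langle g^{q-1}\rangle$ (the unique subgroup of order $(q^r-1)/(q-1)$), so it factors through $\mathbb F_{q^r}^*/\ker N\cong\mathbb F_q^*$, producing the required $\psi$ with $\psi'=\lambda$; the counting variant via the injectivity of $\psi\mapsto\psi'$ and uniqueness of the subgroup of order $q-1$ in the cyclic character group is an equally valid alternative. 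What your write-up buys is a complete elementary proof where the paper outsources the verification; the only cosmetic point worth adding is a sentence confirming that the extension-by-zero convention at $x=0$ is consistent on both sides, which you already address for part (a).
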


\begin{proof}
See \cite[Theorem~11.4.4(a, c, e)]{BEW}.
\end{proof}

The following lemma, which is due to Davenport and Hasse, gives the relationship between a Gauss sum and its lift. 
\begin{lemma}
\label{l8} 
Let $\psi$ be a nontrivial character on $\mathbb F_q$
and let $\psi'$ denote the lift of $\psi$ from $\mathbb F_{q^{}}$
to $\mathbb F_{q^r}$. Then
$$
G(\psi')=(-1)^{r-1}G(\psi)^r.
$$
\end{lemma}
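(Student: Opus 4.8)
The plan is to give the original proof, via a generating Dirichlet series over the polynomial ring $\mathbb F_q[x]$. First I would set up notation: let $\lambda(x)=\zeta_p^{{\mathop{\rm Tr}\nolimits}(x)}$ be the canonical additive character of $\mathbb F_q$, and to every monic $f(x)=x^k-\sigma_1x^{k-1}+\dots+(-1)^k\sigma_k\in\mathbb F_q[x]$ associate the number $\omega(f)=\lambda(\sigma_1)\,\psi(\sigma_k)$. Since $\sigma_1$ is the sum and $\sigma_k$ the product of the roots of $f$, and $\lambda$ turns sums into products while $\psi$ is multiplicative, $\omega$ is completely multiplicative on the monoid of monic polynomials; hence, using unique factorization in $\mathbb F_q[x]$,
\[
L(t)\ :=\ \sum_{f\ \text{monic}}\omega(f)\,t^{\deg f}\ =\ \prod_{P}\bigl(1-\omega(P)\,t^{\deg P}\bigr)^{-1},
\]
the product being over monic irreducibles $P$.

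Next I would evaluate $L(t)$ in two ways. From the sum side: as $f$ runs over monic polynomials of degree $k$, the coefficients $(\sigma_1,\dots,\sigma_k)$ run freely over $\mathbb F_q^k$, so the coefficient of $t^k$ equals $\bigl(\sum_{\sigma_1\in\mathbb F_q}\lambda(\sigma_1)\bigr)q^{k-2}\bigl(\sum_{\sigma_k\in\mathbb F_q}\psi(\sigma_k)\bigr)=0$ for $k\ge2$ (because $\lambda$ is nontrivial), equals $\sum_{a\in\mathbb F_q}\psi(a)\lambda(a)=G(\psi)$ for $k=1$, and equals $1$ for $k=0$. Hence $L(t)=1+G(\psi)t$. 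From the product side, taking the formal logarithm,
\[
\log L(t)\ =\ \sum_{P}\sum_{m\ge1}\frac1m\,\omega(P)^m\,t^{m\deg P}\ =\ \sum_{k\ge1}\frac{t^k}{k}\sum_{d\mid k}\ \sum_{\deg P=d}d\,\omega(P)^{k/d}.
\]
Here the crucial point is that for a monic irreducible $P$ of degree $d\mid k$ with a root $\alpha\in\mathbb F_{q^d}\subseteq\mathbb F_{q^k}$, the sum and product of the roots of $P$ are ${\mathop{\rm Tr}\nolimits}_{\mathbb F_{q^d}/\mathbb F_q}(\alpha)$ and ${\mathop{\rm N}}_{\mathbb F_{q^d}/\mathbb F_q}(\alpha)$, while transitivity of trace and norm yields $(k/d){\mathop{\rm Tr}\nolimits}_{\mathbb F_{q^d}/\mathbb F_q}(\alpha)={\mathop{\rm Tr}\nolimits}_{\mathbb F_{q^k}/\mathbb F_q}(\alpha)$ and ${\mathop{\rm N}}_{\mathbb F_{q^d}/\mathbb F_q}(\alpha)^{k/d}={\mathop{\rm N}}_{\mathbb F_{q^k}/\mathbb F_q}(\alpha)$; consequently $\omega(P)^{k/d}=\lambda\bigl({\mathop{\rm Tr}\nolimits}_{\mathbb F_{q^k}/\mathbb F_q}(\alpha)\bigr)\psi\bigl({\mathop{\rm N}}_{\mathbb F_{q^k}/\mathbb F_q}(\alpha)\bigr)$, independently of which root $\alpha$ of $P$ is chosen.

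Since every $\alpha\in\mathbb F_{q^k}$ has a unique minimal polynomial over $\mathbb F_q$, whose degree divides $k$, the assignment $(P,\alpha)\mapsto\alpha$ is a bijection from the set of pairs (monic irreducible $P$ over $\mathbb F_q$ of degree dividing $k$, root $\alpha$ of $P$) onto $\mathbb F_{q^k}$; therefore the inner sum above equals $\sum_{\alpha\in\mathbb F_{q^k}}\psi'(\alpha)\,\zeta_p^{{\mathop{\rm Tr}\nolimits}(\alpha)}=G(\psi')$, where $\psi'$ is the lift of $\psi$ to $\mathbb F_{q^k}$ (here one also uses transitivity of the trace to identify $\lambda\circ{\mathop{\rm Tr}\nolimits}_{\mathbb F_{q^k}/\mathbb F_q}$ with the canonical additive character of $\mathbb F_{q^k}$). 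So $\log L(t)=\sum_{k\ge1}G(\psi')t^k/k$. Comparing this with $\log(1+G(\psi)t)=\sum_{k\ge1}(-1)^{k-1}G(\psi)^kt^k/k$ and equating coefficients of $t^r$ gives $G(\psi')=(-1)^{r-1}G(\psi)^r$.

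I expect the main obstacle to be the combinatorial bookkeeping in the logarithm step — correctly establishing the bijection between $\mathbb F_{q^k}$ and the pairs (monic irreducible over $\mathbb F_q$, root), keeping track of the factor $d=\deg P$, and verifying the norm/trace transitivity identities that rewrite $\omega(P)^{k/d}$ as the lifted character evaluated at a root. Everything else is a formal power-series manipulation over a field of characteristic zero, so no convergence or analyticity issues arise (the constant term of $L(t)$ is $1$, so its logarithm is well defined).
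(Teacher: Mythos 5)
Your argument is correct: it is the classical Davenport--Hasse proof via the $L$-series $L(t)=\sum_f\omega(f)t^{\deg f}$ over $\mathbb F_q[x]$, computing $L(t)=1+G(\psi)t$ on one side and extracting $G(\psi')$ from the logarithm of the Euler product on the other, and all the key checks (complete multiplicativity of $\omega$ with $\psi(0)=0$, trace/norm transitivity, the bijection between elements of $\mathbb F_{q^k}$ and roots of irreducibles of degree dividing $k$) go through as you describe. The paper itself gives no argument but simply cites \cite[Theorem~11.5.2]{BEW} and \cite[Theorem~5.14]{LN}, and your proof is essentially the one found in those references, so there is nothing to reconcile.
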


\begin{proof}
See \cite[Theorem~11.5.2]{BEW} or \cite[Theorem~5.14]{LN}.
\end{proof}

Now we turn to the case $p\equiv\pm 3\!\pmod{8}$. The next three lemmas were established in our earlier paper~\cite{B2} in more general settings (see Lemmas~2.2, 2.13, 2.16, respectively).

\begin{lemma}
\label{l9} 
Let $p\equiv\pm 3\!\pmod{8}$, $r$ be an
integer, and $\xi$ be a $2^k${\rm th} primitive root of unity,
where $r\ge 3$ and $k\le r$. Then
$$
\sum_{v=0}^{2^{r-2}-1}\xi^{p^v}=
\begin{cases}
2^{r-3}(\xi+\xi^p)&\text{if $k\le 3$,}\\
0&\text{if $k>3$.}
\end{cases}
$$
\end{lemma}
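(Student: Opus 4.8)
The plan is to study the multiplicative order of $p$ modulo $2^k$ and exploit the periodicity of the sequence $v\mapsto\xi^{p^v}$. Since $\xi$ has exact order $2^k$, the term $\xi^{p^v}$ depends only on $p^v\bmod 2^k$, so it suffices to understand the sum over one period; the hypothesis $k\le r$ guarantees that the summation length $2^{r-2}$ is a multiple of $2^{k-2}$, which in turn (as I will check) is a multiple of the order of $p$ modulo $2^k$, so the full sum is a nonnegative integer multiple of the sum over a single period. I will treat $k\le 3$ and $k>3$ separately, the hypothesis $p\equiv\pm 3\pmod 8$ being used only in the second case.

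For $k\le 3$ the order of $p$ modulo $2^k$ divides $2$, because every odd integer squares to $1$ modulo $8$. Hence $p^v\equiv 1\pmod{2^k}$ for even $v$ and $p^v\equiv p\pmod{2^k}$ for odd $v$, so $\xi^{p^v}$ equals $\xi$ or $\xi^p$ accordingly. Among $v=0,1,\dots,2^{r-2}-1$ there are, since $r\ge 3$, exactly $2^{r-3}$ even indices and $2^{r-3}$ odd indices, which yields the value $2^{r-3}(\xi+\xi^p)$.

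For $k>3$ I use the structure $(\mathbb Z/2^k\mathbb Z)^\times=\langle-1\rangle\times\langle 5\rangle$ (valid for $k\ge 3$). Writing $p\equiv(-1)^{\epsilon}5^{t}\pmod{2^k}$, reduction modulo $8$ together with $p\equiv\pm 3\pmod 8$ forces $t$ to be odd. A standard $2$-adic valuation computation (by lifting the exponent, or by induction from $5^{2^{j}}=(5^{2^{j-1}})^{2}$) gives $v_2\!\left(5^{2^{k-3}}-1\right)=k-1$, whence $5^{2^{k-3}}\equiv 1+2^{k-1}\pmod{2^k}$; combined with $t$ odd this yields $p^{2^{k-3}}\equiv 1+2^{k-1}\pmod{2^k}$. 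Since $(1+2^{k-1})^2\equiv 1\pmod{2^k}$ while $1+2^{k-1}\ne 1$, the order of $p$ modulo $2^k$ is exactly $2^{k-2}$, so the full sum equals $2^{r-k}\sum_{v=0}^{2^{k-2}-1}\xi^{p^v}$. Splitting this period sum at the midpoint and pairing $v$ with $v+2^{k-3}$, one has $\xi^{p^{v+2^{k-3}}}=\xi^{p^{v}(1+2^{k-1})}=\xi^{p^{v}}\xi^{2^{k-1}}=-\xi^{p^{v}}$, because $\xi^{2^{k-1}}=-1$ and $p^{v}$ is odd; hence the period sum vanishes, and so does the full sum.

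The main obstacle is the $k>3$ step, namely verifying $p^{2^{k-3}}\equiv 1+2^{k-1}\pmod{2^k}$ (equivalently, that $1+2^{k-1}$ lies in $\langle p\rangle$). This is precisely where $p\equiv\pm 3\pmod 8$ rather than $p\equiv\pm 1\pmod 8$ is essential: it is what makes the $5$-part exponent $t$ odd, so that $\langle p\rangle\supseteq\langle 5^2\rangle\ni 5^{2^{k-3}}=1+2^{k-1}$; under $p\equiv\pm 1\pmod 8$ the period sum would in general be nonzero. Everything else—the valuation estimate and the bookkeeping of how many $v$ fall in each residue class—is routine.
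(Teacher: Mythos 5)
Your proof is correct. Note that the paper itself gives no proof of this lemma: it is quoted from Lemma~2.2 of the author's earlier paper \cite{B2}, so there is no internal argument to compare against. Your self-contained route --- reducing to one period of $v\mapsto p^v \bmod 2^k$, handling $k\le 3$ by the fact that odd squares are $1 \bmod 8$, and for $k>3$ using $(\mathbb Z/2^k\mathbb Z)^\times=\langle -1\rangle\times\langle 5\rangle$ with $p\equiv(-1)^{\epsilon}5^{t}$, $t$ odd, to get $p^{2^{k-3}}\equiv 1+2^{k-1}\pmod{2^k}$ and then cancelling via the pairing $v\leftrightarrow v+2^{k-3}$, since $\xi^{2^{k-1}}=-1$ and $p^v$ is odd --- is the natural argument and all steps (the LTE valuation $v_2(5^{2^{k-3}}-1)=k-1$, the count of even/odd indices, the divisibility $2^{k-2}\mid 2^{r-2}$ from $k\le r$) check out, including the degenerate subcase $k=2$ where $\xi+\xi^p=0$ and the stated formula still holds.
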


\begin{lemma}
\label{l10} 
Let $p\equiv\pm 3\!\pmod{8}$ and
$\psi$ be a character of order~$2^r$ on $\mathbb F_q$, where 
$$
r\ge\begin{cases}
4&\text{if $p\equiv\hphantom{-} 3\!\!\pmod{8}$,}\\
3&\text{if $p\equiv-3\!\!\pmod{8}$.}
\end{cases}
$$ 
Then $G(\psi)=G(\psi\eta)$.
\end{lemma}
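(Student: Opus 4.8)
The plan is to play the relation $G(\psi)=G(\psi^p)$ of Lemma~\ref{l2}(b) off against the character-sum identity of Lemma~\ref{l9}. Since $\eta$ is the unique quadratic character on $\mathbb F_q$ and $\psi$ has order $2^r$, we have $\eta=\psi^{2^{r-1}}$, so that $\psi\eta=\psi^{1+2^{r-1}}$ has order $2^r$ as well and, for every $v\ge 0$, $(\psi\eta)^{p^v}=\psi^{p^v}\eta^{p^v}=\psi^{p^v}\eta$ (because $\eta^{p^v}=\eta$). Applying Lemma~\ref{l2}(b) repeatedly to $\psi,\psi^p,\psi^{p^2},\dots$ and to $\psi\eta,(\psi\eta)^p,\dots$ gives $G(\psi^{p^v})=G(\psi)$ and $G(\psi^{p^v}\eta)=G(\psi\eta)$ for all $v\ge 0$; summing the first $2^{r-2}$ of these equalities yields
$$
2^{r-2}\bigl(G(\psi)-G(\psi\eta)\bigr)=\sum_{v=0}^{2^{r-2}-1}\bigl(G(\psi^{p^v})-G(\psi^{p^v}\eta)\bigr).
$$

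Next I would expand the Gauss sums from the definition, interchange the order of summation, and pull the $v$-independent factor $\eta(x)$ out, obtaining
$$
2^{r-2}\bigl(G(\psi)-G(\psi\eta)\bigr)=\sum_{x\in\mathbb F_q^*}\bigl(1-\eta(x)\bigr)\zeta_p^{\mathrm{Tr}(x)}\sum_{v=0}^{2^{r-2}-1}\psi(x)^{p^v}.
$$
For fixed $x\in\mathbb F_q^*$, the number $\psi(x)$ is a primitive $2^k$th root of unity with $0\le k\le r$, so Lemma~\ref{l9} (applicable because $r\ge 3$) shows the inner sum equals $2^{r-3}\bigl(\psi(x)+\psi(x)^p\bigr)$ when $k\le 3$ and vanishes when $k>3$. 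Hence only the $x$ with $\psi(x)^{8}=1$ contribute, and on that set one has, using $r\ge 4$,
$$
\eta(x)=\psi(x)^{2^{r-1}}=\bigl(\psi(x)^{8}\bigr)^{2^{r-4}}=1 ,
$$
so the factor $1-\eta(x)$ annihilates the whole sum and $G(\psi)=G(\psi\eta)$.

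This settles every case with $r\ge 4$, which is the full statement for $p\equiv 3\pmod{8}$; the only remaining case is $p\equiv-3\pmod{8}$ with $r=3$, and there $p\equiv 5\pmod{8}$ forces $1+2^{r-1}=5\equiv p\pmod{8}$, so $\psi\eta=\psi^{1+2^{r-1}}=\psi^{p}$ and $G(\psi\eta)=G(\psi^{p})=G(\psi)$ directly from Lemma~\ref{l2}(b). The main technical point to watch is the passage through Lemma~\ref{l9}: one must confirm that the degenerate cases $\psi(x)\in\{1,-1\}$ (that is, $k\le 1$) still obey the stated evaluation, and keep the exponent bookkeeping straight so the inner sum is exactly the one appearing in that lemma. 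It is precisely the identity $\eta(x)=1$ on $\{x:\psi(x)^{8}=1\}$ that fails when $r=3$, which is why that borderline case — possible only for $p\equiv-3\pmod{8}$ — has to be handled separately. (One can also bypass Lemma~\ref{l9}: a short $2$-adic valuation computation gives $p^{2^{r-3}}\equiv 1+2^{r-1}\pmod{2^r}$ under the stated hypotheses, whence $\psi\eta=\psi^{p^{2^{r-3}}}$ and again $G(\psi\eta)=G(\psi)$ by Lemma~\ref{l2}(b).)
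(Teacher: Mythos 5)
Your proof is correct, and it supplies something the paper itself does not: the paper gives no argument for Lemma~\ref{l10} at all, but simply cites Lemma~2.13 of the author's earlier paper~\cite{B2}, so your self-contained derivation (resting only on the quoted Lemmas~\ref{l2}(b) and \ref{l9}) is a genuinely independent route within this paper's toolkit. The averaging step is sound: writing $\eta=\psi^{2^{r-1}}$, summing $G(\psi^{p^v})-G(\psi^{p^v}\eta)$ over $v=0,\dots,2^{r-2}-1$, and swapping sums reduces everything to the inner sums $\sum_v \psi(x)^{p^v}$, which vanish for $\psi(x)$ of order $>8$ by Lemma~\ref{l9}, while for $\psi(x)^8=1$ and $r\ge 4$ the factor $1-\eta(x)=1-\psi(x)^{2^{r-1}}$ is zero — indeed, for those terms you do not even need the precise evaluation $2^{r-3}(\xi+\xi^p)$, so the ``degenerate case'' worry about $k\le 1$ is moot. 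The leftover case $p\equiv-3\pmod 8$, $r=3$ is handled correctly via $\psi\eta=\psi^5=\psi^p$. Your parenthetical alternative is in fact the cleanest version and presumably closest to the argument in \cite{B2}: since $p$ has order $2^{r-2}$ modulo $2^r$ under the stated hypotheses, one gets $p^{2^{r-3}}\equiv 1+2^{r-1}\pmod{2^r}$ (this is where $r\ge 4$ is needed when $p\equiv 3\pmod 8$, and where $r=3$ is admissible when $p\equiv -3\pmod 8$), whence $\psi\eta=\psi^{p^{2^{r-3}}}$ and $G(\psi\eta)=G(\psi)$ directly from Lemma~\ref{l2}(b), uniting all cases in one line; the character-sum route buys nothing extra here beyond avoiding that explicit congruence, but both are valid.
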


\begin{lemma}
\label{l11} 
Let $p\equiv\pm 3\!\pmod{8}$  and
$\psi$ be a character of order~$2^r$ on $\mathbb F_q$, where $r\ge 3$. Then
$$
\psi(4)=
\begin{cases}
1 & \text{if $p\equiv\hphantom{-}3\!\!\pmod{8}$,}\\
(-1)^{s/2^{r-2}}&\text{if $p\equiv -3\!\!\pmod{8}$.}
\end{cases}
$$
\end{lemma}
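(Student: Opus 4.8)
The plan is to reduce the evaluation of $\psi(4)$ to a $2$-adic bookkeeping problem inside the cyclic group $\mathbb F_q^*$. Fix a generator $g$ of $\mathbb F_q^*$; since $\psi$ has order $2^r$, the value $\psi(g)$ is a primitive $2^r$th root of unity, so $\psi(g)^{2^{r-1}}=-1$. Writing $2=g^{a}$, where $a=\mathrm{ind}_g(2)$, we get $\psi(4)=\psi(2)^2=\psi(g)^{2a}$. Hence $\psi(4)=1$ exactly when $2^{r}\mid 2a$, i.e.\ when $v_2(a)\ge r-1$, while $\psi(4)=-1$ when $v_2(a)=r-2$ (then $2a=2^{r-1}\cdot(\text{odd})$, so $\psi(g)^{2a}=(\psi(g)^{2^{r-1}})^{\text{odd}}=-1$); I will check that $v_2(a)<r-2$ cannot occur. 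So it suffices to compute $v_2(a)$ and compare it with $r-1$.

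First I would observe that $2^r\mid q-1$ with $r\ge 3$ forces $s$ to be even: if $s$ were odd, then $v_2(q-1)=v_2(p^s-1)=v_2(p-1)\le 2$, because $p\equiv\pm3\pmod 8$. With $s$ even, a routine lifting-the-exponent computation gives $v_2(q-1)=v_2(p^2-1)+v_2(s)-1$; and since $p\equiv\pm3\pmod 8$ implies $p^2\equiv 9\pmod{16}$, one has $v_2(p^2-1)=3$, whence $v_2(q-1)=v_2(s)+2$. In particular the hypothesis $2^r\mid q-1$ is equivalent to $v_2(s)\ge r-2$.

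Next I would compute $a$ through the prime field. Since $2\in\mathbb F_p^*$, we may write $2=g^{((q-1)/(p-1))e}$, where $e$ is the index of $2$ in the cyclic group $\mathbb F_p^*$, so that $a\equiv (q-1)/(p-1)\cdot e\pmod{q-1}$. By the second supplement to quadratic reciprocity, $2$ is a quadratic non-residue modulo $p$ when $p\equiv\pm3\pmod 8$, so $e$ is odd and $v_2(e)=0$; therefore $v_2(a)=v_2\big((q-1)/(p-1)\big)=v_2(q-1)-v_2(p-1)=v_2(s)+2-v_2(p-1)$. Since $v_2(p-1)=1$ for $p\equiv 3\pmod 8$ and $v_2(p-1)=2$ for $p\equiv-3\pmod 8$, this yields $v_2(a)=v_2(s)+1$ in the first case and $v_2(a)=v_2(s)$ in the second.

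Finally I would combine these facts. If $p\equiv 3\pmod 8$, then $v_2(a)=v_2(s)+1\ge(r-2)+1=r-1$, so $\psi(4)=1$. If $p\equiv-3\pmod 8$, write $s=2^{r-2}s'$ with $s'$ a positive integer (legitimate since $v_2(s)\ge r-2$); then $v_2(a)=v_2(s)=(r-2)+v_2(s')$, so $v_2(a)\ge r-1$ exactly when $s'$ is even and $v_2(a)=r-2$ exactly when $s'$ is odd. Hence $\psi(4)=1$ if $s'$ is even and $\psi(4)=-1$ if $s'$ is odd, i.e.\ $\psi(4)=(-1)^{s'}=(-1)^{s/2^{r-2}}$, as claimed. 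The only point that needs care is the $2$-adic valuation of $q-1$ and its interplay with the constraint $2^r\mid q-1$ (together with the case split on $v_2(p-1)$); once $s$ is known to be even and $2$ is known to be a non-residue modulo $p$, everything else is routine.
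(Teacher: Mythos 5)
Your proof is correct. Note, though, that this paper does not prove Lemma 11 at all: it is imported from the author's earlier paper [B2] (Lemma 2.16 there), where it is obtained in a more general setting as part of a systematic study of characters of $2$-power order. Your argument is a genuinely self-contained alternative: you reduce $\psi(4)=\psi(g)^{2a}$ to computing $\nu_2(a)$ for $a=\mathrm{ind}_g(2)$, using (i) the valuation identity $\nu_2(q-1)=\nu_2(p^2-1)+\nu_2(s)-1=\nu_2(s)+2$ (the same fact the paper quotes from Beyl in the proof of Lemma 12), (ii) the second supplement to quadratic reciprocity to see that the index of $2$ in $\mathbb F_p^*$ is odd, and (iii) the split $\nu_2(p-1)=1$ or $2$ according as $p\equiv 3$ or $-3\pmod 8$. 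All steps check out: the passage from $2\in\mathbb F_p^*$ to $a\equiv\frac{q-1}{p-1}e\pmod{q-1}$ is sound, and the only point of potential concern --- that $a$ is defined only modulo $q-1$ --- is harmless because the conditions you test ($2^{r-1}\mid a$, respectively $\nu_2(a)=r-2$) depend only on $a$ modulo $2^{r-1}$, and $2^{r-1}\mid(q-1)$; equivalently, your computed valuation $\nu_2(a)=\nu_2(q-1)-\nu_2(p-1)<\nu_2(q-1)$ is representative-independent. What your route buys is transparency and independence from [B2]: a short discrete-logarithm bookkeeping argument that also makes visible exactly why the answer depends on $s/2^{r-2}$ only through its parity, and why the hypothesis $r\ge 3$ (which forces $s$ even and $\nu_2(s)\ge r-2$) is needed; what the paper's citation buys is access to the more general machinery of [B2], which is also the source of Lemmas 9 and 10 used elsewhere.
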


We next relate Gauss sums over $\mathbb F_q$ to Jacobi sums over a subfield of $\mathbb F_q$.

\begin{lemma}
\label{l12}
Let $p\equiv 3\!\pmod{8}$ and $\psi$ be a character of order $2^r$  on $\mathbb F_q$, where $r\ge 3$. Assume that $2^{r+1}\mid(q-1)$. Then $\psi^{2^{r-3}}$  is equal to the lift of some octic character $\chi$ on $\mathbb F_{p^{s/2^{r-2}}}$. Moreover,  $G(\psi)=q^{(2^{r-2}-1)/2^{r-1}}J(\chi)$.
\end{lemma}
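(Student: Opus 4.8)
The statement has two parts, and I would handle them in order: that $\psi^{2^{r-3}}$ is a lift of an octic character from $\mathbb F_{p^{s/2^{r-2}}}$, and then the evaluation of $G(\psi)$. For the first part I begin by extracting the $2$-adic content of the hypotheses. Since $p\equiv 3\pmod 8$ one has $v_2(p-1)=1$ and $v_2(p+1)=2$, so for even $s$ the lifting-the-exponent formula gives $v_2(q-1)=v_2(p^s-1)=2+v_2(s)$; as $2^{r+1}\mid q-1$ with $r\ge 3$, this forces $s$ to be even and $v_2(s)\ge r-1$. Hence $2^{r-2}\mid s$, so $t:=s/2^{r-2}$ is an integer with $v_2(t)\ge 1$, and $v_2(p^t-1)=2+v_2(t)\ge 3$, i.e. $8\mid p^t-1$. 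The character $\psi^{2^{r-3}}$ has order exactly $2^r/\gcd(2^r,2^{r-3})=8$; since its order $8$ divides $p^t-1$, Lemma~\ref{l7}(b) shows it equals the lift $\chi'$ of some character $\chi$ on $\mathbb F_{p^t}$, and Lemma~\ref{l7}(c) shows $\chi$ is octic. This settles the first assertion.

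For the Gauss sum, set $\lambda_j=\psi^{2^{r-j}}$ for $3\le j\le r$, so that $\lambda_r=\psi$, $\lambda_j^2=\lambda_{j-1}$, $\lambda_j$ has order $2^j$, and $\lambda_3=\chi'$. For $4\le j\le r$ the order of $\lambda_j$ is at least $16$, so Lemma~\ref{l10} gives $G(\lambda_j)=G(\lambda_j\eta)$; feeding this into Lemma~\ref{l5} and using $\lambda_j(4)=1$ (Lemma~\ref{l11}) yields the recursion $G(\lambda_j)^2=G(\lambda_{j-1})G(\eta)$ for $4\le j\le r$. Iterating downward from $j=r$ produces
$$G(\psi)^{2^{r-3}}=G(\lambda_3)\,G(\eta)^{2^{r-3}-1}.$$
Now I would evaluate the right side. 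Since $\lambda_3=\chi'$ under the degree-$2^{r-2}$ extension $\mathbb F_q/\mathbb F_{p^t}$, Lemma~\ref{l8} gives $G(\lambda_3)=(-1)^{2^{r-2}-1}G(\chi)^{2^{r-2}}=-G(\chi)^{2^{r-2}}$. By Lemma~\ref{l6}, $G(\chi)^2=G(\chi^2)J(\chi)$, and since $\chi^2$ is biquadratic on $\mathbb F_{p^t}$ with $t$ even, Lemma~\ref{l4} gives $G(\chi^2)=-(p^t)^{1/2}$, so $G(\chi)^{2^{r-2}}=\bigl(-(p^t)^{1/2}J(\chi)\bigr)^{2^{r-3}}$. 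Finally, $4\mid s$, so Lemma~\ref{l3} gives $G(\eta)=-q^{1/2}$. Substituting (and using $p^t=q^{1/2^{r-2}}$, together with the fact that $2^{r-1}\mid s$, so $q^{(2^{r-2}-1)/2^{r-1}}$ is an integral power of $p$) and simplifying the exponents yields
$$G(\psi)^{2^{r-3}}=\Bigl(q^{(2^{r-2}-1)/2^{r-1}}J(\chi)\Bigr)^{2^{r-3}}.$$
Hence $G(\psi)=\zeta\,q^{(2^{r-2}-1)/2^{r-1}}J(\chi)$ for some $2^{r-3}$-th root of unity $\zeta$. When $r=3$ the iteration is vacuous and the displayed chain $G(\psi)=G(\chi')=-G(\chi)^2=-G(\chi^2)J(\chi)=q^{1/4}J(\chi)$ gives the result with no ambiguity, so one may assume $r\ge 4$ from now on.

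It remains to show $\zeta=1$. First I would shrink the field of definition. Because $v_2\bigl((q-1)/(p-1)\bigr)=1+v_2(s)\ge r$, the character $\psi$ is trivial on $\mathbb F_p^\times$, so $G(\psi)\in\mathbb Q(\zeta_{2^r})$; moreover $G(\psi)=G(\psi^p)$ (Lemma~\ref{l2}(b)) and $G(\psi)=G(\psi\eta)$ (Lemma~\ref{l10}) show that $G(\psi)$ is fixed by the subgroup of $\mathrm{Gal}(\mathbb Q(\zeta_{2^r})/\mathbb Q)$ generated by the automorphisms $\zeta_{2^r}\mapsto\zeta_{2^r}^{p}$ and $\zeta_{2^r}\mapsto\zeta_{2^r}^{1+2^{r-1}}$, which for $p\equiv 3\pmod 8$ and $r\ge 4$ has index $2$ with fixed field $\mathbb Q(\sqrt{-2})$. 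Likewise $G(\chi)=G(\chi^p)=G(\chi^3)$ (Lemma~\ref{l2}(b), since $\chi$ is octic and $p\equiv 3\pmod 8$), so $J(\chi)=G(\chi)^2/G(\chi^2)$ is invariant under $\chi\mapsto\chi^3$, i.e. $J(\chi)\in\mathbb Q(\sqrt{-2})$. Consequently $\zeta\in\mathbb Q(\sqrt{-2})$ is a root of unity, whence $\zeta=\pm1$. The remaining sign I would pin down by induction on $r$: the base case $r=3$ was done above, and for the inductive step one combines the top instance $G(\psi)^2=G(\psi^2)G(\eta)$ of the recursion with the inductive evaluation of $G(\psi^2)$ — the octic character attached to $\psi^2$ being the degree-$2$ lift of $\chi$, so its Jacobi sum is $-J(\chi)^2$ — to obtain $G(\psi)^2=\bigl(q^{(2^{r-2}-1)/2^{r-1}}J(\chi)\bigr)^2$; the final choice of sign is then forced by a Stickelberger-type congruence for $G(\psi)$ modulo a prime above $p$ (equivalently, by matching the $p$-adic expansions of the two sides).

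I expect this last step — fixing the sign of $\zeta$ — to be the real obstacle. Everything up to $\zeta=\pm1$ is a formal manipulation of the lemmas already available, but squaring destroys the sign at every stage of the recursion, so the sign is genuinely invisible from the multiplicative relations and must be recovered either from a congruence argument or from an independent explicit evaluation of one of the two sides.
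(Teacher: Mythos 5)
Your first assertion (that $\psi^{2^{r-3}}$ is a lift of an octic character) is fine, and your reduction of the evaluation to a sign is essentially sound: the recursion $G(\lambda_j)^2=G(\lambda_{j-1})G(\eta)$ is legitimately obtained from Lemmas~\ref{l5}, \ref{l10}, \ref{l11}, and the Galois argument pinning $G(\psi)$ and $J(\chi)$ into $\mathbb Q(\sqrt{-2})$ does reduce the ambiguity to $\zeta=\pm 1$. But the proof is not complete: the final sign, which you yourself identify as ``the real obstacle,'' is never actually determined. ``A Stickelberger-type congruence modulo a prime above $p$'' or ``matching $p$-adic expansions'' is a program, not an argument --- carrying it out for characters of $2$-power order is a genuinely delicate computation (the known sign determinations for octic and higher $2$-power Gauss sums are exactly where the hard work in this subject lives), and nothing in the lemmas you are given does it for you. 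Since every multiplicative relation you use has been squared at some stage, the sign is invisible to your entire apparatus, so as written the proof establishes only $G(\psi)=\pm q^{(2^{r-2}-1)/2^{r-1}}J(\chi)$.

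The idea you are missing is the one the paper uses to avoid square roots altogether: descend through the tower rather than staying over $\mathbb F_q$. Since $\nu_2(s)\ge r-1$, one has $2^r\mid(p^{s/2}-1)$, so not just $\psi^{2^{r-3}}$ but $\psi$ itself is the lift of a character $\rho$ of order $2^r$ on $\mathbb F_{p^{s/2}}$; Davenport--Hasse (Lemma~\ref{l8}) then gives the exact identity $G(\psi)=-G(\rho)^2$, with no sign lost, and over the subfield Lemmas~\ref{l10}, \ref{l5}, \ref{l11}, \ref{l3} convert $G(\rho)^2=G(\rho)G(\rho\eta_0)=G(\rho^2)G(\eta_0)$ and $-G(\eta_0)=q^{1/4}$ (using $8\mid s$), so $G(\psi)=q^{1/4}G(\rho^2)$ exactly. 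Induction applied to $\rho^2$ (order $2^{r-1}$ on $\mathbb F_{p^{s/2}}$) then closes the argument, together with a short norm-compatibility check that the octic character produced at the bottom of the tower does lift to $\psi^{2^{r-3}}$. Note that your own base case $r=3$, where you get the clean chain $G(\psi)=-G(\chi)^2=-G(\chi^2)J(\chi)=q^{1/4}J(\chi)$, is precisely this descent; had you repeated that same move at every level instead of iterating $G(\lambda_j)^2=G(\lambda_{j-1})G(\eta)$ over the top field, no root of unity $\zeta$ would ever have appeared.
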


\begin{proof}
We prove the assertion of the lemma by induction on $r$. Let $16\mid(q-1)$ and $\psi$ be an octic character on $\mathbb F_q$. As $4\mid s$, we have $8\mid(p^{s/2}-1)$, and Lemma~\ref{l7} shows that $\psi$ is equal to the lift of some octic character $\chi$ on $\mathbb F_{p^{s/2}}$, that is, $\chi'=\psi$. Lemmas~\ref{l6} and \ref{l8} yield $G(\psi)=G(\chi')=-G(\chi)^2=-G(\chi^2)J(\chi)$. Note that $\chi^2$ has order 4. Thus, by Lemma~\ref{l4}, $G(\chi^2)=-q^{1/4}$,
and so $G(\psi)=q^{1/4}J(\chi)$. This completes the proof for the case $r=3$.

Suppose now that $r>3$, and assume that the result is true when $r$ is replaced by $r-1$. Let $2^{r+1}\mid(q-1)$ and $\psi$ be a character of order $2^r$  on $\mathbb F_q$. Since $s$ is even, we have $\nu_2(q-1)=\nu_2(p^s-1)=\nu_2(p^2-1)+\nu_2(s)-1$, where $\nu_2(z)$ denotes the $2$-adic valuation of $z\in\mathbb Z^+$, i.e., $2^{\nu_2(z)}\parallel z$ (for a proof, see \cite[Proposition~1]{B}). Hence $\nu_2(s)=\nu_2(q-1)-2\ge r-1$. Then $2^{r-2}\mid\frac s2$, and so $2^r\mid(p^{s/2}-1)$. By Lemma~\ref{l7}, $\psi$ is equal to the lift of some character $\rho$ of order $2^r$ on $\mathbb F_{p^{s/2}}$, that is $\rho'=\psi$. Applying Lemmas \ref{l3}, \ref{l5}, \ref{l8}, \ref{l10}, \ref{l11} and using the fact that $8\mid s$, we deduce
\begin{align}
G(\psi)&=G(\rho')=-G(\rho)^2=-G(\rho)G(\rho\eta_0)=-G(\rho^2)G(\eta_0)\notag\\
&=-(-1)^{(s/2)-1}i^{s/2}p^{s/4}G(\rho^2)=q^{1/4}G(\rho^2),\label{eq3}
\end{align}
where $\eta_0$ denotes the quadratic character on $\mathbb F_{p^{s/2}}$. Note that $\rho^2$ has order $2^{r-1}$ and $2^r\mid(p^{s/2}-1)$. Hence, by inductive hypothesis, $(\rho^2)^{2^{r-4}}=\rho^{2^{r-3}}$ is equal to the lift of some octic character $\chi$ on $\mathbb F_{p^{(s/2)/2^{r-3}}}=\mathbb F_{p^{s/2^{r-2}}}$ and $G(\rho^2)=(p^{s/2})^{(2^{r-3}-1)/2^{r-2}}J(\chi)=q^{(2^{r-3}-1)/2^{r-1}}J(\chi)$. Substituting this expression for $G(\rho^2)$ into \eqref{eq3}, we obtain $G(\psi)=q^{(2^{r-2}-1)/2^{r-1}}J(\chi)$. It remains to show that $\psi^{2^{r-3}}$ is equal to the lift of $\chi$. Indeed, for any $x\in\mathbb F_q$ we have
\begin{align*}
\chi({\mathop{\rm N}}_{\mathbb F_q/\mathbb F_{p^{s/2^{r-2}}}}(x))&=\chi(x^{(p^s-1)/(p^{s/2^{r-2}}-1)})
=\chi((x^{(p^s-1)/(p^{s/2}-1)})^{(p^{s/2}-1)/(p^{s/2^{r-2}}-1)})\\
&=\chi({\mathop{\rm N}}_{\mathbb F_{p^{s/2}}/\mathbb F_{p^{s/2^{r-2}}}}(x^{(p^s-1)/(p^{s/2}-1)}))=\rho^{2^{r-3}}(x^{(p^s-1)/(p^{s/2}-1)})\\
&=\left(\rho({\mathop{\rm N}}_{\mathbb F_{p^s}/\mathbb F_{p^{s/2}}}(x))\right)^{2^{r-3}}=\psi^{2^{r-3}}(x).
\end{align*}
Therefore $\chi'=\psi^{2^{r-3}}$, and the result now follows by the principle of mathematical induction.
\end{proof}

For the case $p\equiv -3\!\pmod{8}$ a similar result is given in the next lemma.
\begin{lemma}
\label{l13}
Let $p\equiv -3\!\pmod{8}$ and $\psi$ be a character of order $2^r$  on $\mathbb F_q$, where $r\ge 2$. Assume that $2^{r+1}\mid(q-1)$. Then $\psi^{2^{r-2}}$  is equal to the lift of some biquadratic character $\chi$ on $\mathbb F_{p^{s/2^{r-1}}}$. Moreover,  $G(\psi)=(-1)^{s(r-1)/2^{r-1}}q^{(2^{r-1}-1)/2^r}J(\chi)$.
\end{lemma}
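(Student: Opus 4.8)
The plan is to mimic the proof of Lemma~\ref{l12}, running an induction on $r$ but with the base case shifted down to $r=2$ because the hypothesis $p\equiv-3\!\pmod 8$ is weaker (Lemmas~\ref{l10} and \ref{l11} apply already for $r\ge 3$ in this case, and the Stickelberger-type evaluation needed in the base case is now just the evaluation of the quadratic Gauss sum rather than Lemma~\ref{l4}). For the base case $r=2$: assume $8\mid(q-1)$ and let $\psi$ be a biquadratic character on $\mathbb F_q$. As in Lemma~\ref{l12}, since $p\equiv-3\!\pmod 8$ one has $\nu_2(p^2-1)=3$, so $\nu_2(q-1)=3+\nu_2(s)-1\ge 3$ forces $2\mid s$, hence $4\mid(p^{s/2}-1)$ and by Lemma~\ref{l7} $\psi=\chi'$ for some biquadratic character $\chi$ on $\mathbb F_{p^{s/2}}$. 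Then Lemmas~\ref{l8} and \ref{l6} give $G(\psi)=G(\chi')=-G(\chi)^2=-G(\chi^2)J(\chi)=-G(\eta_0)J(\chi)$, and Lemma~\ref{l3} evaluates $G(\eta_0)$ over $\mathbb F_{p^{s/2}}$ as $(-1)^{(s/2)-1}i^{s/2}p^{s/4}$. One must check this equals $-(-1)^{s/2}q^{1/4}$, i.e.\ that $(-1)^{(s/2)-1}i^{s/2}=-(-1)^{s/2}$, which holds because $s/2$ is even (from $\nu_2(s)\ge 2$, since $\nu_2(s)=\nu_2(q-1)-2\ge 1$ only gives $2\mid s$, so here one needs $2^3\mid(q-1)$ to force $\nu_2(s)\ge 2$ — wait, $\nu_2(q-1)\ge 3$ does give $\nu_2(s)\ge 2$, so $4\mid s$ and $i^{s/2}=(-1)^{s/4}$... let me be careful). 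Actually with $\nu_2(s)\ge r-1=1$ one only gets $2\mid s$; the correct reading is that the exponent $(r-1)/2^{r-1}=1/2$ in the statement times $s$ is $s/2$, and one wants $G(\psi)=(-1)^{s/2}q^{1/4}J(\chi)$; reconciling $-(-1)^{(s/2)-1}i^{s/2}=(-1)^{s/2}$ requires $i^{s/2}=1$, i.e.\ $4\mid s$, which follows since $\nu_2(s)=\nu_2(q-1)-2$ and we need $2^{r+1}=8\mid(q-1)$ hence $\nu_2(s)\ge 1$ — so the sign bookkeeping in the base case is exactly the delicate point and must be traced through the valuation identity $\nu_2(q-1)=\nu_2(p^2-1)+\nu_2(s)-1=3+\nu_2(s)-1$.

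For the inductive step, suppose $r>2$ and the result holds for $r-1$. Let $2^{r+1}\mid(q-1)$ and $\psi$ have order $2^r$ on $\mathbb F_q$. The valuation identity gives $\nu_2(s)=\nu_2(q-1)-2\ge r-1$, so $2^{r-2}\mid s/2$, hence $2^r\mid(p^{s/2}-1)$, and by Lemma~\ref{l7} $\psi=\rho'$ for some character $\rho$ of order $2^r$ on $\mathbb F_{p^{s/2}}$. Then, exactly as in \eqref{eq3}, apply Lemma~\ref{l8}, then Lemma~\ref{l10} (valid since $r\ge 3$ here), then Lemma~\ref{l5}, then Lemma~\ref{l11} and Lemma~\ref{l3} to get $G(\psi)=-G(\rho)^2=-G(\rho)G(\rho\eta_0)=-\bar\rho(4)G(\rho^2)G(\eta_0)$ and reduce the scalar $-\bar\rho(4)G(\eta_0)$ over $\mathbb F_{p^{s/2}}$ to a power of $q^{1/4}$ times a sign. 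Here Lemma~\ref{l11} must be applied over the field $\mathbb F_{p^{s/2}}$ with the character $\rho$ of order $2^r$, giving $\rho(4)=(-1)^{(s/2)/2^{r-2}}=(-1)^{s/2^{r-1}}$, and Lemma~\ref{l3} gives $G(\eta_0)=(-1)^{(s/2)-1}i^{s/2}p^{s/4}$ with $i^{s/2}=1$ as $4\mid s/2$ (since $\nu_2(s)\ge r-1\ge 2$). Combining yields $G(\psi)=(\text{sign})\,q^{1/4}G(\rho^2)$ for an explicit sign in terms of $s$ and $r$; then apply the inductive hypothesis to $\rho^2$ (order $2^{r-1}$, with $2^r\mid(p^{s/2}-1)$) to write $G(\rho^2)=(-1)^{(s/2)(r-2)/2^{r-2}}(p^{s/2})^{(2^{r-2}-1)/2^{r-1}}J(\chi)=(-1)^{s(r-2)/2^{r-1}}q^{(2^{r-2}-1)/2^r}J(\chi)$ for an octic—no, biquadratic—character $\chi$ on $\mathbb F_{p^{(s/2)/2^{r-2}}}=\mathbb F_{p^{s/2^{r-1}}}$.

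The remaining arithmetic is to check that the product of the two signs collapses to $(-1)^{s(r-1)/2^{r-1}}$ and the product of the two powers of $q$ collapses to $q^{(2^{r-1}-1)/2^r}$; the latter is immediate from $\tfrac14+\tfrac{2^{r-2}-1}{2^r}=\tfrac{2^{r-2}}{2^r}+\tfrac{2^{r-2}-1}{2^r}=\tfrac{2^{r-1}-1}{2^r}$, and the former is a congruence check on exponents that I would carry out using $2\mid s/2^{r-1}$. Finally, the identification $\chi'=\psi^{2^{r-2}}$ as characters on $\mathbb F_q$ is proved by the same norm-transitivity computation as at the end of Lemma~\ref{l12}: writing $\mathrm{N}_{\mathbb F_q/\mathbb F_{p^{s/2^{r-1}}}}=\mathrm{N}_{\mathbb F_{p^{s/2}}/\mathbb F_{p^{s/2^{r-1}}}}\circ\mathrm{N}_{\mathbb F_q/\mathbb F_{p^{s/2}}}$, one gets $\chi(\mathrm{N}_{\mathbb F_q/\mathbb F_{p^{s/2^{r-1}}}}(x))=\rho^{2^{r-2}}(\mathrm{N}_{\mathbb F_q/\mathbb F_{p^{s/2}}}(x))=\psi^{2^{r-2}}(x)$, using $\chi'=(\rho^2)^{2^{r-3}}=\rho^{2^{r-2}}$ from the inductive step and $\rho'=\psi$. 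I expect the main obstacle to be the sign bookkeeping — keeping straight the three sources of signs (from Lemma~\ref{l8}'s $(-1)^{r-1}$ with $r=2$, from $\bar\rho(4)$ via Lemma~\ref{l11}, and from $G(\eta_0)$ via Lemma~\ref{l3}) and verifying they assemble into the single clean exponent $s(r-1)/2^{r-1}$, which requires careful use of the $2$-adic valuation constraints to know which of $s$, $s/2$, $s/2^{r-1}$ are even.
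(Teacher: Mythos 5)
Your overall strategy is exactly the intended one (the paper itself only says the proof is ``analogous to that of Lemma~\ref{l12}''), and your skeleton --- base case $r=2$ via Lemmas~\ref{l7}, \ref{l8}, \ref{l6}, inductive step via Lemmas~\ref{l8}, \ref{l10}, \ref{l5}, \ref{l11}, \ref{l3} applied over $\mathbb F_{p^{s/2}}$, the exponent identity $\tfrac14+\tfrac{2^{r-2}-1}{2^r}=\tfrac{2^{r-1}-1}{2^r}$, and the norm-transitivity identification $\chi'=\psi^{2^{r-2}}$ --- is correct. But the sign bookkeeping, which you yourself flag as the delicate point, is genuinely wrong as written, because you copied the evaluation $G(\eta_0)=(-1)^{(s/2)-1}i^{s/2}p^{s/4}$ from \eqref{eq3}. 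That is the $p\equiv 3\pmod 4$ branch of Lemma~\ref{l3}; here $p\equiv-3\pmod 8$ means $p\equiv 5\pmod 8$, so $p\equiv 1\pmod 4$ and the correct evaluation is $G(\eta_0)=(-1)^{(s/2)-1}p^{s/4}$, with no factor $i^{s/2}$ at all. The spurious $i^{s/2}$ then forces you into two false $2$-adic claims to cancel it: in the base case you assert that $\nu_2(q-1)\ge 3$ gives $\nu_2(s)\ge 2$ (it only gives $\nu_2(s)\ge 1$, since $\nu_2(q-1)=2+\nu_2(s)$; e.g.\ $p=5$, $s=2$, $q=25$ satisfies all hypotheses with $s/2$ odd), and in the inductive step you assert $4\mid s/2$ from $\nu_2(s)\ge r-1\ge 2$, which fails for $r=3$ with $\nu_2(s)=2$. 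Likewise ``$2\mid s/2^{r-1}$'' is false in general and is not needed.

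With the correct branch of Lemma~\ref{l3} everything closes without these claims. In the base case $G(\psi)=-G(\eta_0)J(\chi)=-(-1)^{(s/2)-1}q^{1/4}J(\chi)=(-1)^{s/2}q^{1/4}J(\chi)$, which is the stated formula for $r=2$, valid with only $2\mid s$. In the inductive step $\nu_2(s)\ge r-1\ge 2$ does give $2\mid s/2$, so $(-1)^{(s/2)-1}=-1$ and $G(\eta_0)=-q^{1/4}$; combined with $\bar\rho(4)=(-1)^{s/2^{r-1}}$ from Lemma~\ref{l11} (applied over $\mathbb F_{p^{s/2}}$) and the $-1$ from Lemma~\ref{l8}, this gives $G(\psi)=(-1)^{s/2^{r-1}}q^{1/4}G(\rho^2)$, and the inductive hypothesis $G(\rho^2)=(-1)^{s(r-2)/2^{r-1}}q^{(2^{r-2}-1)/2^r}J(\chi)$ then yields the exponent $s/2^{r-1}+s(r-2)/2^{r-1}=s(r-1)/2^{r-1}$ as an exact identity, with no parity argument required. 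So the gap is not structural but it is real: as written, your sign verification rests on an inapplicable Gauss-sum evaluation and on valuation statements that are false, and it must be redone along the lines above.
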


\begin{proof}
The proof is analogous to that of Lemma~\ref{l12}.
\end{proof}

From now on we shall assume that $p\equiv\pm 3\!\pmod{8}$, $2^m\mid(q-1)$, $\lambda$ is a fixed character of order $2^m$ on $\mathbb F_q$ and
$$
m\ge\begin{cases}
3&\text{if $p\equiv\hphantom{-}3\!\!\pmod{8}$,}\\
2&\text{if $p\equiv-3\!\!\pmod{8}$.}
\end{cases}
$$
We observe that $2^{m-2}\mid s$. To simplify notation, put $N=N[x_1^{2^m}+\dots+x_n^{2^m}=0]$. Employing Lemma~\ref{l1}, we obtain
\begin{align}
N&=q^{n-1}+\frac{q-1}q\sum_{\substack{1\le j_1,\dots,j_n\le 2^m-1\\ j_1+\dots+j_n\equiv 0\!\!\!\!\pmod{2^m}}}G(\lambda^{j_1})\cdots G(\lambda^{j_n})\notag\\
&=q^{n-1}+\frac{q-1}{2^mq}\sum_{c=1}^{2^m}\left(\sum_{j=1}^{2^m-1}G(\lambda^j)\zeta_{2^m}^{cj}\right)^n.\label{eq4}
\end{align}
For $t=0,1,\dots,m$, set
$$
S_t=\sum_{\substack{c=1\\2^t\parallel c}}^{2^m}\left(\sum_{j=1}^{2^m-1} G(\lambda^j)\zeta_{2^m}^{cj}\right)^n=\sum_{\substack{c_0=1\\2\nmid c_0}}^{2^{m-t}}\left(\sum_{j=1}^{2^m-1} G(\lambda^j)\zeta_{2^{m-t}}^{c_0j}\right)^n.
$$
Then \eqref{eq4} can be rewritten in the form
\begin{equation}
\label{eq5}
N=q^{n-1}+\frac{q-1}{2^mq}\sum_{t=0}^m S_t.
\end{equation}
For $r=1,2,\dots,m$ and any odd integer $c_0$, set
$$
W_{r,t}(c_0)=\sum_{\substack{j=1\\ 2^{m-r}\parallel j}}^{2^m-1}G(\lambda^j)\zeta_{2^{m-t}}^{c_0j}=\sum_{\substack{j_0=1\\ 2\nmid j_0}}^{2^r-1}G(\lambda^{2^{m-r}j_0})\zeta_{2^{m-t}}^{2^{m-r}c_0j_0}.
$$
In this notation we can write
\begin{equation}
\label{eq6}
S_t=\sum_{\substack{c_0=1\\2\nmid c_0}}^{2^{m-t}}\left(\sum_{r=1}^m W_{r,t}(c_0)\right)^n.
\end{equation}

\begin{lemma}
\label{l14}
We have
\begin{align*}
W_{1,t}(c_0)&=\begin{cases}
-G(\eta)&\text{if $t=0$,}\\
\hphantom{-}G(\eta)&\text{if $t\ge 1$,}
\end{cases}\\
W_{2,t}(c_0)&=\begin{cases}
G(\lambda^{2^{m-2}})+G(\bar\lambda^{2^{m-2}})&\text{if $t\ge 2$,}\\
-\bigl(G(\lambda^{2^{m-2}})+G(\bar\lambda^{2^{m-2}})\bigr)&\text{if $t=1$,}\\
i^{c_0}\bigl(G(\lambda^{2^{m-2}})-G(\bar\lambda^{2^{m-2}})\bigr)&\text{if $t=0$,}
\end{cases}
\end{align*}
and, for $3\le r\le m$,
$$
W_{r,t}(c_0)=\begin{cases}
2^{r-2}\bigl(G(\lambda^{2^{m-r}})+G(\bar\lambda^{2^{m-r}})\bigr)&\text{if $r\le t$,}\\
-2^{r-2}\bigl(G(\lambda^{2^{m-r}})+G(\bar\lambda^{2^{m-r}})\bigr)&\text{if $r=t+1$,}\\
2^{r-2} i^{c_0}\bigl(G(\lambda^{2^{m-r}})-G(\bar\lambda^{2^{m-r}})\bigr)&\text{if $r=t+2$ and $p\equiv -3\!\!\pmod{8}$,}\\
2^{r-3} i\sqrt{2}\,\bigl(G(\lambda^{2^{m-r}})-G(\bar\lambda^{2^{m-r}})\bigr)&\text{if $r=t+3$, $p\equiv 3\!\!\pmod{8}$}\\
&\text{and $c_0\equiv 1$ or $3\!\!\pmod{8}$,}\\
-2^{r-3} i\sqrt{2}\,\bigl(G(\lambda^{2^{m-r}})-G(\bar\lambda^{2^{m-r}})\bigr)&\text{if $r=t+3$, $p\equiv 3\!\!\pmod{8}$}\\
&\text{and $c_0\equiv 5$ or $7\!\!\pmod{8}$,}\\
0&\text{otherwise.}
\end{cases}
$$
\end{lemma}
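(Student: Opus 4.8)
The plan is to split the range of $j_0$ in $W_{r,t}(c_0)$ into Frobenius orbits under $\lambda\mapsto\lambda^p$ and then reduce the resulting inner sums of roots of unity to the form handled by Lemma~\ref{l9}. First I would dispose of $r=1$ and $r=2$ directly. For $r=1$ the only term has $j_0=1$; since $\lambda^{2^{m-1}}$ has order $2$ it equals $\eta$, and $\zeta_{2^{m-t}}^{2^{m-1}c_0}=(-1)^{c_0}=-1$ for $t=0$, $=1$ for $t\ge1$ (as $c_0$ is odd), which gives the claimed values. For $r=2$ the terms have $j_0\in\{1,3\}$ with $\lambda^{3\cdot2^{m-2}}=\bar\lambda^{2^{m-2}}$; evaluating $\zeta_{2^{m-t}}^{2^{m-2}c_0}$ and $\zeta_{2^{m-t}}^{3\cdot2^{m-2}c_0}$ for $t\ge2$, $t=1$ and $t=0$ (using $i^{3c_0}=i^{-c_0}=-i^{c_0}$ for odd $c_0$) produces the three displayed formulas.

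Now fix $3\le r\le m$. Since $\zeta_{2^{m-t}}^{2^{m-r}}$ equals $1$ when $r\le t$ and equals $\zeta_{2^{r-t}}$ when $r>t$, we may rewrite $W_{r,t}(c_0)=\sum_{j_0}G(\lambda^{2^{m-r}j_0})\,\zeta_{2^{r-t}}^{c_0j_0}$, the sum being over odd $j_0$ with $1\le j_0\le2^r-1$ and with the convention $\zeta_{2^{r-t}}=1$ for $r\le t$. The key structural fact is that, because $p\equiv\pm3\pmod8$ and $r\ge3$, the residue $p$ has order $2^{r-2}$ modulo $2^r$ and $-1\notin\langle p\rangle$; hence the $2^{r-1}$ odd residues modulo $2^r$ split into the two cosets $\langle p\rangle=\{p^v\bmod 2^r:0\le v<2^{r-2}\}$ and $-\langle p\rangle$. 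By Lemma~\ref{l2}(b), $G(\lambda^{2^{m-r}j_0})$ depends only on the coset of $j_0$, equalling $G(\lambda^{2^{m-r}})$ on $\langle p\rangle$ and $G(\bar\lambda^{2^{m-r}})$ on $-\langle p\rangle$. Therefore
$$W_{r,t}(c_0)=\Sigma\,G(\lambda^{2^{m-r}})+\bar\Sigma\,G(\bar\lambda^{2^{m-r}}),\qquad \Sigma:=\sum_{v=0}^{2^{r-2}-1}\zeta_{2^{r-t}}^{c_0p^v},$$
the coset $-\langle p\rangle$ contributing $\sum_v\zeta_{2^{r-t}}^{-c_0p^v}=\bar\Sigma$.

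It remains to evaluate $\Sigma$. If $r\le t$ then $\Sigma=2^{r-2}$; if $r=t+1$ then $\zeta_2^{c_0p^v}=-1$ gives $\Sigma=-2^{r-2}$; in both cases $\Sigma$ is real, $\bar\Sigma=\Sigma$, and the first two table entries follow. If $r>t+1$, apply Lemma~\ref{l9} with $\xi=\zeta_{2^{r-t}}^{c_0}$, a primitive $2^{r-t}$-th root of unity (since $c_0$ is odd) with exponent $r-t\le r$: then $\Sigma=0$ if $r-t>3$, while $\Sigma=2^{r-3}\bigl(\zeta_{2^{r-t}}^{c_0}+\zeta_{2^{r-t}}^{c_0p}\bigr)$ if $r-t\le3$. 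For $r=t+2$ this is $2^{r-3}(i^{c_0}+i^{c_0p})$, which equals $2^{r-2}i^{c_0}$ when $p\equiv1\pmod4$ (that is, $p\equiv-3\pmod8$) and $0$ when $p\equiv3\pmod8$; for $r=t+3$ it is $2^{r-3}(\zeta_8^{c_0}+\zeta_8^{c_0p})$, which is $0$ when $p\equiv-3\pmod8$ and equals $\pm2^{r-3}i\sqrt2$ when $p\equiv3\pmod8$, with sign $+$ for $c_0\equiv1,3$ and $-$ for $c_0\equiv5,7\pmod8$. In each non-vanishing subcase here $\Sigma$ is purely imaginary, so $\bar\Sigma=-\Sigma$ and $W_{r,t}(c_0)=\Sigma\bigl(G(\lambda^{2^{m-r}})-G(\bar\lambda^{2^{m-r}})\bigr)$; putting everything together reproduces the table, all unlisted combinations of $r,t,p,c_0$ yielding $W_{r,t}(c_0)=0$. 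The one step requiring real care is the group-theoretic claim, i.e. $p^{2^{r-2}}\equiv1$ but $p^{2^{r-3}}\not\equiv\pm1\pmod{2^r}$; for $r\ge4$ this follows from the valuation identity used in the proof of Lemma~\ref{l12}, which gives $\nu_2(p^{2^k}-1)=\nu_2(p^2-1)+\nu_2(2^k)-1=k+2$ for $k\ge1$ (because $p\equiv\pm3\pmod8$ forces $\nu_2(p^2-1)=3$), and the case $r=3$ reduces to $p\not\equiv\pm1\pmod8$.
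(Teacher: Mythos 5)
Your proposal is correct and follows essentially the same route as the paper: direct evaluation for $r=1,2$, then for $r\ge 3$ the decomposition of the odd residues modulo $2^r$ into the two cosets $\pm\langle p\rangle$ (the paper states this as ``$\pm p^0,\dots,\pm p^{2^{r-2}-1}$ is a reduced residue system modulo $2^r$''), Lemma~\ref{l2}(b) to collapse the Gauss sums, and Lemma~\ref{l9} to evaluate the root-of-unity sums, with the purely imaginary cases handled via $\bar\Sigma=-\Sigma$ exactly as in the paper's use of $\bar\zeta_8^{c_0}+\bar\zeta_8^{3c_0}=-(\zeta_8^{c_0}+\zeta_8^{3c_0})$. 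The only difference is that you supply a proof (via the $2$-adic valuation identity) of the reduced-residue-system fact, which the paper simply asserts; this is a harmless, correct addition.
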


\begin{proof}
We first observe that
\begin{align*}
W_{1,t}(c_0)&=G(\lambda^{2^{m-1}})\zeta_{2^{m-t}}^{2^{m-1}c_0}=G(\eta)\zeta_{2^{m-t}}^{2^{m-1}c_0}=\begin{cases}
-G(\eta)&\text{if $t=0$,}\\
\hphantom{-}G(\eta)&\text{if $t\ge 1$,}
\end{cases}\\
W_{2,t}(c_0)&=G(\lambda^{2^{m-2}})\zeta_{2^{m-t}}^{2^{m-2}c_0}+G(\lambda^{3\cdot 2^{m-2}})\zeta_{2^{m-t}}^{3\cdot 2^{m-2}c_0}\\
&=\begin{cases}
G(\lambda^{2^{m-2}})+G(\bar\lambda^{2^{m-2}})&\text{if $t\ge 2$,}\\
-\bigl(G(\lambda^{2^{m-2}})+G(\bar\lambda^{2^{m-2}})\bigr)&\text{if $t=1$,}\\
i^{c_0}\bigl(G(\lambda^{2^{m-2}})-G(\bar\lambda^{2^{m-2}})\bigr)&\text{if $t=0$.}
\end{cases}
\end{align*}

Now assume that $3\le r\le m$. Since $\lambda^{2^{m-r}}$ has order $2^r$ and $\pm p^0,\pm p^1,\dots,\pm p^{2^{r-2}-1}$ is a reduced residue system modulo $2^r$, we conclude that
$$
W_{r,t}(c_0)=\sum_{u\in\{\pm 1\}}\sum_{v=0}^{2^{r-2}-1}G(\lambda^{2^{m-r}up^v})\zeta_{2^{m-t}}^{2^{m-r}c_0up^v}.
$$
Applying Lemma~\ref{l2}(b), we obtain
\begin{equation}
\label{eq7}
W_{r,t}(c_0)=G(\lambda^{2^{m-r}})\sum_{v=0}^{2^{r-2}-1}\zeta_{2^{m-t}}^{2^{m-r}c_0p^v}+G(\bar\lambda^{2^{m-r}})\sum_{v=0}^{2^{r-2}-1}\bar\zeta_{2^{m-t}}^{2^{m-r}c_0p^v}.
\end{equation}
If $r\le t$, then $\zeta_{2^{m-t}}^{2^{m-r}c_0}=\bar\zeta_{2^{m-t}}^{2^{m-r}c_0}=1$ and $W_{r,t}(c_0)=2^{r-2}\bigl(G(\lambda^{2^{m-r}})+G(\bar\lambda^{2^{m-r}})\bigr)$. Suppose that $r>t$. Then $\zeta_{2^{m-t}}^{2^{m-r}c_0}=\zeta_{2^{r-t}}^{c_0}$ is a $2^{r-t}$th primitive root of unity. If $r=t+1$, then $\zeta_{2^{r-t}}^{c_0}=-1$, and so $W_{r,t}(c_0)=-2^{r-2}\bigl(G(\lambda^{2^{m-r}})+G(\bar\lambda^{2^{m-r}})\bigr)$. If $r=t+2$, then $\zeta_{2^{r-t}}^{c_0}=i^{c_0}$. Appealing to Lemma~\ref{l9}, we deduce that
$$
\sum_{v=0}^{2^{r-2}-1}\zeta_{2^{m-t}}^{2^{m-r}c_0p^v}=\sum_{v=0}^{2^{r-2}-1}i^{c_0p^v}=2^{r-3}(i^{c_0}+i^{c_0p})=\begin{cases}
0&\text{if $p\equiv\hphantom{-}3\!\!\pmod{8}$,}\\
2^{r-2}i^{c_0}&\text{if $p\equiv -3\!\!\pmod{8}$,}
\end{cases}
$$
 and the result follows from \eqref{eq7}. Next assume that $r=t+3$. Again by Lemma~\ref{l9},
 \begin{align*}
 \sum_{v=0}^{2^{r-2}-1}\zeta_{2^{m-t}}^{2^{m-r}c_0p^v}&=\sum_{v=0}^{2^{r-2}-1}\zeta_8^{c_0p^v}=2^{r-3}(\zeta_8^{c_0}+\zeta_8^{c_0p})\\
 &=\begin{cases}
2^{r-3} i\sqrt{2}&\text{if $p\equiv\hphantom{-}3\!\!\pmod{8}$ and $c_0\equiv 1$ or $3\!\!\pmod{8}$,}\\
 -2^{r-3} i\sqrt{2}&\text{if $p\equiv\hphantom{-}3\!\!\pmod{8}$ and $c_0\equiv 5$ or $7\!\!\pmod{8}$,}\\
 0&\text{if $p\equiv -3\!\!\pmod{8}$.}\\
 \end{cases}
 \end{align*}
 The result now follows from \eqref{eq7} and the fact that $\bar\zeta_8^{c_0}+\bar\zeta_8^{3c_0}=-(\zeta_8^{c_0}+\zeta_8^{3c_0})$. Finally, assume that $r>t+3$. In view of Lemma~\ref{l9},
 $$
 \sum_{v=0}^{2^{r-2}-1}\zeta_{2^{m-t}}^{2^{m-r}c_0p^v}=\sum_{v=0}^{2^{r-2}-1}\zeta_{2^{r-t}}^{c_0p^v}=0,
 $$
 and \eqref{eq7} yields $W_{r,t}(c_0)=0$. This completes the proof of Lemma~\ref{l14}.
\end{proof}

From Lemma~\ref{l14} we see that $W_{r,m-1}(1)=W_{r,m}(1)$ for $1\le r\le m-1$, and $W_{m,m-1}(1)=-W_{m,m}(1)$.  Note also that in the case $p\equiv 3\!\pmod{8}$ we have $G(\lambda^{2^{m-2}})=G(\bar\lambda^{2^{m-2}})=-q^{1/2}$ by Lemma~\ref{l4}. Hence in this case $W_{2,0}(c_0)=0$ for any odd $c_0$. In view of these observations, the following corollary is an immediate consequence of Lemma~\ref{l14}.

\begin{corollary}
\label{c1}
We have
$$
S_{m-1}+S_m=\left(\sum_{r=1}^{m-1} W_{r,m}(1)+W_{m,m}(1)\right)^n+\left(\sum_{r=1}^{m-1} W_{r,m}(1)-W_{m,m}(1)\right)^n.
$$
Furthermore, if $p\equiv 3\!\pmod{8}$, then
$$
S_{m-2}=2\cdot\left(\sum_{r=1}^{m-1} W_{r,m-2}(1)\right)^n,
$$
and, for $t\le m-3$,
$$
S_t=2^{m-t-2}\left[\left(\sum_{r=1}^{t+1} W_{r,t}(1)+W_{t+3,t}(1)\right)^n+\left(\sum_{r=1}^{t+1} W_{r,t}(1)-W_{t+3,t}(1)\right)^n\,\right].
$$
If $p\equiv -3\!\pmod{8}$ and $t\le m-2$, then
$$
S_t=2^{m-t-2}\left[\left(\sum_{r=1}^{t+1} W_{r,t}(1)+W_{t+2,t}(1)\right)^n+\left(\sum_{r=1}^{t+1} W_{r,t}(1)-W_{t+2,t}(1)\right)^n\,\right].
$$
\end{corollary}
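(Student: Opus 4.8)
The plan is to substitute the explicit evaluations of $W_{r,t}(c_0)$ furnished by Lemma~\ref{l14} into the defining expression \eqref{eq6} for $S_t$, and then to exploit the fact that in each of the four regimes the inner sum $\sum_{r=1}^{m}W_{r,t}(c_0)$ depends on the odd parameter $c_0$ only through a single $\pm1$ sign, or not at all. After that the argument reduces to elementary bookkeeping together with one count of residue classes.

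For the first assertion, observe that when $t=m$ or $t=m-1$ the range of summation over $c_0$ in \eqref{eq6} collapses to the single value $c_0=1$, so that $S_m=\bigl(\sum_{r=1}^{m}W_{r,m}(1)\bigr)^n$ and $S_{m-1}=\bigl(\sum_{r=1}^{m}W_{r,m-1}(1)\bigr)^n$. Using the identities recorded immediately before the corollary, namely $W_{r,m-1}(1)=W_{r,m}(1)$ for $1\le r\le m-1$ and $W_{m,m-1}(1)=-W_{m,m}(1)$, the inner sum for $S_{m-1}$ becomes $\sum_{r=1}^{m-1}W_{r,m}(1)-W_{m,m}(1)$ and that for $S_m$ becomes $\sum_{r=1}^{m-1}W_{r,m}(1)+W_{m,m}(1)$; adding the two $n$th powers gives the claimed formula.

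For the remaining three formulas the key structural observation, read off from Lemma~\ref{l14}, is the following: for every odd $c_0$ the terms $W_{r,t}(c_0)$ with $r\le t+1$ are independent of $c_0$, so that $\sum_{r=1}^{t+1}W_{r,t}(c_0)=\sum_{r=1}^{t+1}W_{r,t}(1)=:A$, while the terms with $r\ge t+4$ vanish; hence only $r=t+2$ and $r=t+3$ can carry a dependence on $c_0$. If $p\equiv3\pmod8$, the $r=t+2$ term vanishes (for $t\ge1$ by the ``$r=t+2$, $p\equiv3$'' line of Lemma~\ref{l14}, and for $t=0$ by the remark $W_{2,0}(c_0)=0$ preceding the corollary, which comes from Lemma~\ref{l4}); thus for $t=m-2$, where no $r=t+3$ term is present at all, the inner sum equals $A=\sum_{r=1}^{m-1}W_{r,m-2}(1)$ for both admissible values $c_0\in\{1,3\}$, yielding $S_{m-2}=2A^{n}$, whereas for $t\le m-3$ the inner sum equals $A+W_{t+3,t}(1)$ when $c_0\equiv1,3\pmod8$ and $A-W_{t+3,t}(1)$ when $c_0\equiv5,7\pmod8$, since $W_{t+3,t}(c_0)=\pm W_{t+3,t}(1)$ with sign governed by $c_0\bmod8$. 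If instead $p\equiv-3\pmod8$, it is the $r=t+3$ term that vanishes, while the $r=t+2$ term is $i^{c_0}$ times a fixed quantity; so for $t\le m-2$ the inner sum equals $A+W_{t+2,t}(1)$ when $c_0\equiv1\pmod4$ and $A-W_{t+2,t}(1)$ when $c_0\equiv3\pmod4$.

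It then remains to count. For $p\equiv3\pmod8$ and $t\le m-3$ one has $8\mid 2^{m-t}$, so of the $2^{m-t-1}$ odd integers $c_0$ in $\{1,\dots,2^{m-t}\}$ exactly $2^{m-t-2}$ lie in the classes $1,3\pmod8$ and exactly $2^{m-t-2}$ in the classes $5,7\pmod8$; summing the $n$th powers over $c_0$ produces $2^{m-t-2}\bigl[(A+B)^{n}+(A-B)^{n}\bigr]$ with $B=W_{t+3,t}(1)$. Likewise, for $p\equiv-3\pmod8$ and $t\le m-2$ one has $4\mid 2^{m-t}$, so exactly $2^{m-t-2}$ of the odd $c_0$ satisfy $c_0\equiv1\pmod4$ and $2^{m-t-2}$ satisfy $c_0\equiv3\pmod4$, which gives the stated expression with $B=W_{t+2,t}(1)$. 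The one point demanding genuine care is the bookkeeping of the exceptional small-index formulas ($r=1$ and $r=2$) and of the boundary values of $t$ (in particular $t=0$ and $t=m-2$), to be sure that the two reductions ``$W_{r,t}(c_0)$ is $c_0$-free for $r\le t+1$'' and ``the term $r=t+2$, respectively $r=t+3$, is the unique carrier of the $\pm1$ sign'' hold with no exceptions; once this is checked, the corollary is immediate.
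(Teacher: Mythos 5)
Your argument is correct and is exactly the paper's intended reasoning: the paper declares the corollary an immediate consequence of Lemma~\ref{l14} together with the remarks that $W_{r,m-1}(1)=W_{r,m}(1)$ for $r\le m-1$, $W_{m,m-1}(1)=-W_{m,m}(1)$, and $W_{2,0}(c_0)=0$ when $p\equiv 3\pmod 8$, and your write-up simply fills in the same substitution into \eqref{eq6} plus the residue-class count of odd $c_0$ modulo $8$ (resp.\ $4$).
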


\section{The case $p\equiv 3\pmod{8}$}
\label{s3}

In this section, let $p\equiv 3\!\pmod{8}$, $q=p^s\equiv 1\!\pmod{2^m}$, $m\ge 3$. As before, $\lambda$ is a fixed character of order $2^m$ on $\mathbb F_q$.

For $r=2,3,\dots,m$, define  the integers $A_r$ and $B_r$ by
\begin{equation}
\label{eq8}
p^{s/2^{r-2}}=A_r^2+2B_r^2,\qquad A_r\equiv -1\!\!\pmod{4},\qquad p\nmid A_r.
\end{equation}
It is well known \cite[Lemma~3.0.1]{BEW} that for each fixed $r$, \eqref{eq8} determines $A_r$ uniquely but determines $B_r$ only up to sign. Also, if $2^{r-1}\mid s$, or, equivalently, $2^{r+1}\mid(q-1)$, and $\chi$ is an octic character on $\mathbb F_{p^{s/2^{r-2}}}$ then $J(\chi)=A_r\pm|B_r| i\sqrt{2}$  (see \cite[Lemma~17]{B1}). Combining this last fact with Lemma~\ref{l12}, we deduce the following result.

\begin{lemma}
\label{l15}
Let $r$ be an integer with $3\le r\le m$ and assume that $2^{r+1}\mid(q-1)$. Then
$$
G(\lambda^{2^{m-r}})+G(\bar\lambda^{2^{m-r}})=2A_r q^{(2^{r-2}-1)/2^{r-1}}
$$
and
$$
G(\lambda^{2^{m-r}})-G(\bar\lambda^{2^{m-r}})=\pm 2|B_r| q^{(2^{r-2}-1)/2^{r-1}}i\sqrt{2}.
$$
\end{lemma}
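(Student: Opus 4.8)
The plan is to reduce everything to Lemma~\ref{l12} together with the quadratic partition of the octic Jacobi sum quoted from~\cite{B1} in the paragraph preceding the lemma. Fix $r$ with $3\le r\le m$ and $2^{r+1}\mid(q-1)$, and set $\psi=\lambda^{2^{m-r}}$, a character of order $2^r$ on $\mathbb F_q$. By Lemma~\ref{l12}, $\psi^{2^{r-3}}$ is the lift of some octic character $\chi$ on $\mathbb F_{p^{s/2^{r-2}}}$ and $G(\lambda^{2^{m-r}})=G(\psi)=q^{(2^{r-2}-1)/2^{r-1}}J(\chi)$. Applying the same lemma to $\bar\psi=\bar\lambda^{2^{m-r}}$, which is also of order $2^r$, yields an octic character $\tilde\chi$ on the same field with $G(\bar\lambda^{2^{m-r}})=q^{(2^{r-2}-1)/2^{r-1}}J(\tilde\chi)$ and $\tilde\chi'=\bar\psi^{\,2^{r-3}}=\overline{\psi^{2^{r-3}}}=\overline{\chi'}=(\bar\chi)'$. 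Since the norm map $\mathbb F_q^*\to\mathbb F_{p^{s/2^{r-2}}}^*$ is surjective, the map sending a character to its lift is injective, so $\tilde\chi=\bar\chi$; consequently $J(\tilde\chi)=J(\bar\chi)=\overline{J(\chi)}$, the last equality being immediate from the definition of the Jacobi sum.

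Next I would invoke the fact, recorded just before the statement, that under the hypothesis $2^{r+1}\mid(q-1)$ every octic character on $\mathbb F_{p^{s/2^{r-2}}}$ has Jacobi sum of the form $A_r\pm|B_r|\,i\sqrt2$, where $A_r$ is the uniquely determined integer from~\eqref{eq8}; in particular $J(\chi)=A_r+\varepsilon|B_r|\,i\sqrt2$ for some $\varepsilon\in\{1,-1\}$, so $\overline{J(\chi)}=A_r-\varepsilon|B_r|\,i\sqrt2$. Since the scalar $q^{(2^{r-2}-1)/2^{r-1}}$ is a positive real number, adding and subtracting the two expressions for $G(\lambda^{2^{m-r}})$ and $G(\bar\lambda^{2^{m-r}})$ gives
$$
G(\lambda^{2^{m-r}})+G(\bar\lambda^{2^{m-r}})=q^{(2^{r-2}-1)/2^{r-1}}\bigl(J(\chi)+\overline{J(\chi)}\bigr)=2A_r\,q^{(2^{r-2}-1)/2^{r-1}}
$$
and
$$
G(\lambda^{2^{m-r}})-G(\bar\lambda^{2^{m-r}})=q^{(2^{r-2}-1)/2^{r-1}}\bigl(J(\chi)-\overline{J(\chi)}\bigr)=\pm 2|B_r|\,q^{(2^{r-2}-1)/2^{r-1}}\,i\sqrt2,
$$
which is the asserted pair of identities.

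As a cross-check, and as an alternative to the injectivity step, one may instead use Lemma~\ref{l2}(a): $G(\psi)G(\bar\psi)=\psi(-1)q=q$, where $\psi(-1)=1$ because $2^{r+1}\mid(q-1)$ forces $-1\in(\mathbb F_q^*)^{2^r}=\ker\psi$. Substituting $G(\psi)=q^{(2^{r-2}-1)/2^{r-1}}J(\chi)$ and $G(\bar\psi)=q^{(2^{r-2}-1)/2^{r-1}}J(\tilde\chi)$ gives $J(\chi)J(\tilde\chi)=q^{1/2^{r-2}}=p^{s/2^{r-2}}=A_r^2+2B_r^2=|J(\chi)|^2$, so again $J(\tilde\chi)=\overline{J(\chi)}$. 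I do not expect a genuine obstacle; the only point that needs care is precisely this bookkeeping — verifying that the octic character attached to $\bar\lambda^{2^{m-r}}$ is the complex conjugate of the one attached to $\lambda^{2^{m-r}}$, so that the two Jacobi sums are true complex conjugates and the $B_r$-terms, rather than cancelling spuriously, combine to give exactly $\pm2|B_r|\,q^{(2^{r-2}-1)/2^{r-1}}\,i\sqrt2$.
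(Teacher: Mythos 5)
Your proof is correct and takes essentially the same route as the paper: the paper gives no separate argument for Lemma~\ref{l15}, simply stating that it follows by combining Lemma~\ref{l12} with the quoted evaluation $J(\chi)=A_r\pm|B_r|\,i\sqrt2$ of octic Jacobi sums over $\mathbb F_{p^{s/2^{r-2}}}$. Your explicit check that the octic character attached to $\bar\lambda^{2^{m-r}}$ is the complex conjugate of the one attached to $\lambda^{2^{m-r}}$ (so the two Jacobi sums are genuine conjugates and the real parts add while the imaginary parts cancel) is exactly the bookkeeping the paper leaves implicit, and both your lift-injectivity argument and the $G(\psi)G(\bar\psi)=q$ cross-check are valid ways to do it.
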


Lemma~\ref{l15} allows us to evaluate $G(\lambda^{2^{m-r}})+G(\bar\lambda^{2^{m-r}})$ and $G(\lambda^{2^{m-r}})-G(\bar\lambda^{2^{m-r}})$ (in the latter case only up to sign) if either $3\le r\le m-1$ or $r=m$ and $2^{m+1}\mid(q-1)$. For the remaining case $r=m$ and $2^m\parallel(q-1)$, we need the following lemma.

\begin{lemma}
\label{l16}
Assume that $2^m\parallel(q-1)$. Then
$$
G(\lambda)+G(\bar\lambda)=\pm 2A_m q^{(2^{m-2}-1)/2^{m-1}}i
$$
and
$$
G(\lambda)-G(\bar\lambda)=\pm 2|B_m| q^{(2^{m-2}-1)/2^{m-1}}\sqrt{2}.
$$
\end{lemma}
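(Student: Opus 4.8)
The plan is to reduce this case, which falls outside the range of Lemma~\ref{l15}, to a situation already handled by passing to the quadratic extension $\mathbb F_{q^2}$, where the relevant $2$-power divisibility improves by one. Two preliminary observations will drive the computation. First, since $p\equiv 3\pmod 8$ and $2^m\parallel(q-1)$ with $m\ge 3$, the degree $s$ is even and, by the $2$-adic valuation formula used in the proof of Lemma~\ref{l12}, $\nu_2(s)=\nu_2(q-1)-2=m-2$; hence $s/2^{m-2}$ is odd and $p^{s/2^{m-2}}\equiv 3\pmod 8$. From this I would extract the arithmetic identity $A_{m-1}=A_m^2-2B_m^2$: the congruence forces $A_m$ odd with $A_m^2\equiv 1\pmod 8$, hence $B_m$ odd and $A_m^2-2B_m^2\equiv -1\pmod 4$; also $2B_m^2\equiv -A_m^2\pmod p$ gives $A_m^2-2B_m^2\equiv 2A_m^2\not\equiv 0\pmod p$; and $(A_m^2-2B_m^2)^2+2(2A_mB_m)^2=(p^{s/2^{m-2}})^2=p^{s/2^{m-3}}$, so by the uniqueness in \eqref{eq8} indeed $A_{m-1}=A_m^2-2B_m^2$. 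Second, $\lambda(-1)=-1$ because $2^m\parallel(q-1)$, whence Lemma~\ref{l2}(a) gives $G(\lambda)G(\bar\lambda)=-q$.

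The core step is to evaluate $G(\lambda)^2+G(\bar\lambda)^2$. Let $\lambda'$ be the lift of $\lambda$ from $\mathbb F_q$ to $\mathbb F_{q^2}$; by Lemma~\ref{l7} it has order $2^m$. Since $q\equiv 1\pmod 4$ we have $\nu_2(q^2-1)=\nu_2(q-1)+1=m+1$, so $2^{m+1}\mid(q^2-1)$, and Lemma~\ref{l12} applies over $\mathbb F_{q^2}$ with $r=m$: the character $(\lambda')^{2^{m-3}}$ is the lift of some octic character $\chi$ on $\mathbb F_{p^{s/2^{m-3}}}$ and $G(\lambda')=q^{(2^{m-2}-1)/2^{m-2}}J(\chi)$. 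The same reasoning with $\bar\lambda$ in place of $\lambda$ gives $G(\bar\lambda')=q^{(2^{m-2}-1)/2^{m-2}}J(\bar\chi)$, the descended octic character now being $\bar\chi$. By Lemma~\ref{l8}, $G(\lambda')=-G(\lambda)^2$ and $G(\bar\lambda')=-G(\bar\lambda)^2$. Since $2^{m-2}\mid s$, the Jacobi-sum evaluation recalled just before Lemma~\ref{l15} (that is, \cite[Lemma~17]{B1}, applied with parameter $m-1$) yields $J(\chi)=A_{m-1}\pm|B_{m-1}|i\sqrt 2$, so $J(\chi)+J(\bar\chi)=2A_{m-1}$. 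Putting these together with the identity from the first paragraph,
$$
G(\lambda)^2+G(\bar\lambda)^2=-2A_{m-1}\,q^{(2^{m-2}-1)/2^{m-2}}=-2(A_m^2-2B_m^2)\,q^{(2^{m-2}-1)/2^{m-2}}.
$$

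Finally I would assemble the result. Write $E=q^{(2^{m-2}-1)/2^{m-2}}$, so that $E=\bigl(q^{(2^{m-2}-1)/2^{m-1}}\bigr)^2$ and $q=E\,p^{s/2^{m-2}}=E(A_m^2+2B_m^2)$. Using $G(\lambda)G(\bar\lambda)=-q$ one then computes
$$
\bigl(G(\lambda)+G(\bar\lambda)\bigr)^2=-2(A_m^2-2B_m^2)E-2(A_m^2+2B_m^2)E=-4A_m^2E
$$
and
$$
\bigl(G(\lambda)-G(\bar\lambda)\bigr)^2=-2(A_m^2-2B_m^2)E+2(A_m^2+2B_m^2)E=8B_m^2E,
$$
and extracting square roots — the ambiguity of sign being precisely the ``$\pm$'' in the statement — gives the two asserted formulas.

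I expect the main obstacle to be the bookkeeping in invoking Lemma~\ref{l12} inside $\mathbb F_{q^2}$: checking that the descended octic character lives over exactly $\mathbb F_{p^{s/2^{m-3}}}$, that passing to $\bar\lambda$ replaces $\chi$ by $\bar\chi$, and that the divisibility $2^{m-2}\mid s$ is just what permits the use of \cite[Lemma~17]{B1} there — together with pinning down the sign in $A_{m-1}=A_m^2-2B_m^2$, which is the one point at which the hypothesis $2^m\parallel(q-1)$, via the parity of $B_m$, is genuinely used.
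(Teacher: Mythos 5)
Your proof is correct, and its overall skeleton coincides with the paper's: square $G(\lambda)\pm G(\bar\lambda)$, use $\lambda(-1)=-1$ to get $G(\lambda)G(\bar\lambda)=-q$, reduce everything to the single quantity $G(\lambda)^2+G(\bar\lambda)^2$, and exploit the identity $A_{m-1}=A_m^2-2B_m^2$ (your derivation of that identity via the uniqueness clause of \eqref{eq8}, with $B_m$ odd coming from $\nu_2(s)=m-2$ and $p^{s/2^{m-2}}\equiv 3\pmod 8$, is exactly the paper's argument, just phrased through uniqueness rather than fixing a sign). Where you genuinely diverge is in evaluating $G(\lambda)^2+G(\bar\lambda)^2$: the paper stays inside $\mathbb F_q$ and splits cases, using Lemmas~\ref{l4} and \ref{l6} for $m=3$ and the product formula machinery (Lemmas~\ref{l3}, \ref{l5}, \ref{l10}, \ref{l11}, \ref{l15}) for $m\ge 4$, arriving at $-2A_{m-1}q^{(2^{m-2}-1)/2^{m-2}}$; you instead pass to $\mathbb F_{q^2}$, where $2^{m+1}\mid(q^2-1)$, apply Lemma~\ref{l12} with $r=m$ to the lift $\lambda'$, and descend through $G(\lambda')=-G(\lambda)^2$ (Lemma~\ref{l8}), reading off the same quantity as $-q^{(2^{m-2}-1)/2^{m-2}}\bigl(J(\chi)+J(\bar\chi)\bigr)=-2A_{m-1}q^{(2^{m-2}-1)/2^{m-2}}$. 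The bookkeeping you flagged does check out: $\lambda'$ has order $2^m$, the descended octic character lives on $\mathbb F_{p^{2s/2^{m-2}}}=\mathbb F_{p^{s/2^{m-3}}}$, conjugating $\lambda$ conjugates $\chi$ by injectivity of the lift (or one can simply use $\overline{G(\lambda)^2}=G(\bar\lambda)^2$ and take real parts), and the cited Jacobi evaluation applies with index $m-1$ since $2^{m-2}\mid s$; note it covers $m=3$ as well, where $\chi=\lambda$ itself. What your route buys is uniformity — no $m=3$ versus $m\ge 4$ case split, with the product-formula work outsourced to the already-proved Lemma~\ref{l12} — at the cost of a detour through the quadratic extension; the paper's version is more self-contained at this point, since Lemma~\ref{l12} is really the same Davenport--Hasse ingredients run one level down.
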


\begin{proof}
Since $2^m\parallel(q-1)$, it follows from Lemma~\ref{l2}(a) that
$$
\bigl(G(\lambda)+G(\bar\lambda)\bigr)^2=G(\lambda)^2+G(\bar\lambda)^2+ 2\lambda(-1)q=G(\lambda)^2+G(\bar\lambda)^2-2q.
$$
If $m=3$, then, by Lemmas~\ref{l4} and \ref{l6},
$$
G(\lambda)^2+G(\bar\lambda)^2=G(\lambda^2)J(\lambda)+G(\bar\lambda^2)J(\bar\lambda)=-2A_2 q^{1/2}.
$$
If $m\ge 4$, then Lemmas~\ref{l3}, \ref{l5}, \ref{l10}, \ref{l11} and \ref{l15} yield
\begin{align*}
G(\lambda)^2+G(\bar\lambda)^2&=G(\lambda)G(\lambda\eta)+G(\bar\lambda)G(\bar\lambda\eta)=\bar\lambda(4)G(\lambda^2)G(\eta)+\lambda(4)G(\bar\lambda^2)G(\eta)\\
&=-q^{1/2}(G(\lambda^2)+G(\bar\lambda^2))=-2A_{m-1}q^{(2^{m-2}-1)/2^{m-2}}.
\end{align*}
Thus, in both cases,
\begin{equation}
\label{eq9}
\bigl(G(\lambda)+ G(\bar\lambda)\bigr)^2=-2q^{(2^{m-2}-1)/2^{m-2}}(A_{m-1}+p^{s/2^{m-2}}).
\end{equation}
 Note that
$$
A_{m-1}^2+2B_{m-1}^2=p^{s/2^{m-3}}=(p^{s/2^{m-2}})^2=(A_m^2+2B_m^2)^2
=(A_m^2-2B_m^2)^2+2\cdot(2A_mB_m)^2.
$$
Hence $A_{m-1}=\pm(A_m^2-2B_m^2)$. Since $p^{s/2^{m-2}}=A_m^2+2B_m^2\equiv 3\!\pmod{8}$, $B_m$ is odd, and so $A_{m-1}=A_m^2-2B_m^2$. Substituting the expressions for $p^{s/2^{m-2}}$ and $A_{m-1}$ into \eqref{eq9} and taking square roots of both sides, we find that $G(\lambda)+ G(\bar\lambda)=\pm 2A_mq^{(2^{m-2}-1)/2^{m-1}}i$. Similarly,
$$
\bigl(G(\lambda)-G(\bar\lambda)\bigr)^2=-2q^{(2^{m-2}-1)/2^{m-2}}(A_{m-1}-p^{s/2^{m-2}})=8B_m^2q^{(2^{m-2}-1)/2^{m-2}},
$$
which implies that $G(\lambda)-G(\bar\lambda)=\pm 2|B_m| q^{(2^{m-2}-1)/2^{m-1}}\sqrt{2}$.
\end{proof}

We are now ready to determine the number $N$  of solutions to \eqref{eq2} in the case $p\equiv 3\!\pmod{8}$. In the proofs of the next two theorems, we shall frequently employ Lemmas~\ref{l14}--\ref{l16} and Corollary~\ref{c1} without further comments.

\begin{theorem}
\label{t1} 
Let $p\equiv 3\!\pmod{8}$ and $2^{m+1}\mid (q-1)$. If $m=3$ then
\begin{align*}
N=\,&q^{n-1}+\frac{q-1}{8q}\biggl[\biggr.2\cdot\left((q^{\frac12}+4B_3q^{\frac14})^n+(q^{\frac12}-4B_3q^{\frac14})^n\right)+2q^{\frac n2}\\
&\biggl.+\left(-3q^{\frac12}+4A_3q^{\frac14}\right)^n+\left(-3q^{\frac12}-4A_3q^{\frac14}\right)^n\biggr].
\end{align*}
If $m\ge 4$ then
\begin{align*}
N=\,&q^{n-1}+\frac{q-1}{2^mq}\cdot\biggl[2^{m-2}\cdot\Bigl((q^{\frac12}+4B_3q^{\frac14})^n+(q^{\frac12}-4B_3q^{\frac14})^n\Bigr)\biggr.\\
&+2^{m-3}\cdot\Bigl((q^{\frac12}+8B_4q^{\frac38})^n+(q^{\frac12}-8B_4q^{\frac38})^n\Bigr)\\
&\begin{aligned}
+\sum_{t=2}^{m-3}\!2^{m-t-2}\Bigl(&(-3q^{\frac12}+\sum_{r=3}^t 2^{r-1}A_rq^{\frac{2^{r-2}-1}{2^{r-1}}}-2^t A_{t+1}q^{\frac{2^{t-1}-1}{2^t}}+2^{t+2} B_{t+3}q^{\frac{2^{t+1}-1}{2^{t+2}}})^n\Bigr.\\
+&(-3q^{\frac12}+\sum_{r=3}^t 2^{r-1}A_rq^{\frac{2^{r-2}-1}{2^{r-1}}}-2^t A_{t+1}q^{\frac{2^{t-1}-1}{2^t}}-2^{t+2} B_{t+3}q^{\frac{2^{t+1}-1}{2^{t+2}}})^n\Bigl.\Bigr)\end{aligned}\\
&+2\cdot\Bigl(-3q^{\frac 12}+\sum_{r=3}^{m-2} 2^{r-1}A_rq^{\frac{2^{r-2}-1}{2^{r-1}}}-2^{m-2} A_{m-1}q^{\frac{2^{m-3}-1}{2^{m-2}}}\Bigr)^n\\
&+\Bigl(-3q^{\frac 12}+\sum_{r=3}^{m-1}2^{r-1}A_r q^{\frac{2^{r-2}-1}{2^{r-1}}}-2^{m-1}A_m q^{\frac{2^{m-2}-1}{2^{m-1}}}\Bigr)^n\\
&+\biggl.\Bigl(-3q^{\frac 12}+\sum_{r=3}^{m}2^{r-1}A_r q^{\frac{2^{r-2}-1}{2^{r-1}}}\Bigr)^n\biggr].
\end{align*}
The integers $A_r$ and $|B_r|$ are uniquely determined by~\eqref{eq8}.
\end{theorem}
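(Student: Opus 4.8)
The plan is to start from the decomposition~\eqref{eq5}, that is, $N=q^{n-1}+\frac{q-1}{2^mq}\sum_{t=0}^{m}S_t$, and to split $\sum_{t=0}^{m}S_t=S_0+S_1+\sum_{t=2}^{m-3}S_t+S_{m-2}+(S_{m-1}+S_m)$. Each of these five pieces is expressed by Corollary~\ref{c1} through the quantities $W_{r,t}(1)$, so the whole problem reduces to making those explicit via Lemma~\ref{l14}, once the relevant Gauss sums are known.

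First I would record the Gauss-sum data. Since $p\equiv3\pmod8$ and $2^{m+1}\mid(q-1)$, the $2$-adic valuation identity $\nu_2(q-1)=\nu_2(p^2-1)+\nu_2(s)-1=\nu_2(s)+2$ used in the proof of Lemma~\ref{l12} forces $\nu_2(s)\ge m-1\ge2$, so $4\mid s$ and Lemma~\ref{l3} gives $G(\eta)=-q^{1/2}$; moreover $\lambda^{2^{m-2}}$ is biquadratic, so Lemma~\ref{l4} gives $G(\lambda^{2^{m-2}})=G(\bar\lambda^{2^{m-2}})=-q^{1/2}$. For each $r$ with $3\le r\le m$ the hypothesis yields $2^{r+1}\mid(q-1)$, so Lemma~\ref{l15} applies and gives $G(\lambda^{2^{m-r}})+G(\bar\lambda^{2^{m-r}})=2A_rq^{(2^{r-2}-1)/2^{r-1}}$ and $G(\lambda^{2^{m-r}})-G(\bar\lambda^{2^{m-r}})=\pm2|B_r|q^{(2^{r-2}-1)/2^{r-1}}i\sqrt2$; Lemma~\ref{l16} is not needed in this regime. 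Feeding these into Lemma~\ref{l14} with $c_0=1\equiv1\pmod8$, and using $(i\sqrt2)^2=-2$, gives $W_{1,0}(1)=q^{1/2}$, $W_{1,t}(1)=-q^{1/2}$ for $t\ge1$, $W_{2,1}(1)=2q^{1/2}$, $W_{2,t}(1)=-2q^{1/2}$ for $t\ge2$, and, for $3\le r\le m$: $W_{r,t}(1)=2^{r-1}A_rq^{(2^{r-2}-1)/2^{r-1}}$ if $r\le t$, $W_{t+1,t}(1)=-2^{t}A_{t+1}q^{(2^{t-1}-1)/2^{t}}$, and $W_{t+3,t}(1)=\mp2^{t+2}|B_{t+3}|q^{(2^{t+1}-1)/2^{t+2}}$ in the case $r=t+3\le m$, all remaining $W_{r,t}(1)$ with $r\ge3$ being $0$.

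Telescoping these then gives $\sum_{r=1}^{t+1}W_{r,t}(1)=q^{1/2}$ for $t=0,1$ and, for $2\le t\le m-2$, $\sum_{r=1}^{t+1}W_{r,t}(1)=-3q^{1/2}+\sum_{r=3}^{t}2^{r-1}A_rq^{(2^{r-2}-1)/2^{r-1}}-2^{t}A_{t+1}q^{(2^{t-1}-1)/2^{t}}$ (the inner sum empty when $t=2$), while for $S_{m-1}+S_m$ one has $\sum_{r=1}^{m-1}W_{r,m}(1)=-3q^{1/2}+\sum_{r=3}^{m-1}2^{r-1}A_rq^{(2^{r-2}-1)/2^{r-1}}$ and $W_{m,m}(1)=2^{m-1}A_mq^{(2^{m-2}-1)/2^{m-1}}$. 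Substituting these into the relevant displays of Corollary~\ref{c1} to read off $S_0,S_1,\sum_{t=2}^{m-3}S_t,S_{m-2},S_{m-1}+S_m$, and then into~\eqref{eq5}, produces the asserted formula for $m\ge4$; the case $m=3$ is stated separately because then the sum $\sum_{t=2}^{m-3}S_t$ is empty, $S_{m-2}=S_1=2\bigl(W_{1,1}(1)+W_{2,1}(1)\bigr)^n=2q^{n/2}$, and several of the inner $r$-sums collapse. The sign ambiguity in each $|B_r|$ coming from Lemma~\ref{l15}, as well as the choice $c_0=1$ among the odd residues, is immaterial: every $|B_r|$ and every $W_{t+3,t}$ enters only through an expression of the shape $(X+Y)^n+(X-Y)^n$, which is even in $Y$ — precisely the form supplied by Corollary~\ref{c1}. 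I expect the only genuine effort to be organizational, namely keeping the boundary indices $t\in\{0,1,m-2,m-1,m\}$, the empty products $\sum_{r=3}^{2}$, and the powers $2^{m-t-2}$ straight, and checking that the $m=3$ specialization agrees; there is no analytic obstacle beyond the lemmas already in hand.
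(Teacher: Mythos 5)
Your proposal is correct and follows essentially the same route as the paper: starting from \eqref{eq5}, evaluating the $W_{r,t}(1)$ via Lemma~\ref{l14} together with $G(\eta)=-q^{1/2}$, Lemma~\ref{l4} and Lemma~\ref{l15} (Lemma~\ref{l16} indeed being unnecessary since $2^{m+1}\mid(q-1)$), and then substituting into Corollary~\ref{c1}, with the sign ambiguities in $|B_r|$ absorbed by the even symmetry $(X+Y)^n+(X-Y)^n$. The boundary values you list ($t=0,1,m-2$, $S_{m-1}+S_m$, and the $m=3$ collapse $S_{m-2}=2q^{n/2}$) all match the paper's computations \eqref{eq10}--\eqref{eq14}.
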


\begin{proof}
Since $2^{m+1}\mid(q-1)$, $m\ge 3$ and $\lambda^{2^{m-2}}$ has order four, we see that 
\begin{align*}
W_{1,m}(1)&=-q^{1/2},\qquad W_{2,m}(1)=-2q^{1/2},\\ 
W_{r,m}(1)&=2^{r-1}A_rq^{(2^{r-2}-1)/2^{r-1}},\quad 3\le r\le m. 
\end{align*}
Hence
\begin{align}
S_{m-1}+S_m=\,&\Bigl(-3q^{\frac 12}+\sum_{r=3}^{m-1}2^{r-1}A_r q^{\frac{2^{r-2}-1}{2^{r-1}}}-2^{m-1}A_m q^{\frac{2^{m-2}-1}{2^{m-1}}}\Bigr)^n\notag\\
&+\Bigl(-3q^{\frac 12}+\sum_{r=3}^{m}2^{r-1}A_r q^{\frac{2^{r-2}-1}{2^{r-1}}}\Bigr)^n.\label{eq10}
\end{align}
Next, 
\begin{align*}
W_{1,m-2}(1)&=-q^{1/2},\qquad
W_{2,m-2}(1)=\begin{cases}
2q^{1/2}&\text{if $m=3$,}\\
-2q^{1/2}&\text{if $m\ge 4$,}
\end{cases}\\
W_{r,m-2}(1)&=2^{r-1}A_rq^{(2^{r-2}-1)/2^{r-1}},\quad 3\le r\le m-2,\quad m\ge 5,\\
W_{m-1,m-2}(1)&=-2^{m-2}A_{m-1}q^{(2^{m-3}-1)/2^{m-2}},\quad m\ge 4.
\end{align*}
Thus
\begin{equation}
\label{eq11}
S_{m-2}=\begin{cases}
2q^{\frac n2}&\text{if $m=3$,}\\
2\cdot\Bigl(-3q^{\frac 12}+\sum\limits_{r=3}^{m-2} 2^{r-1}A_rq^{\frac{2^{r-2}-1}{2^{r-1}}}-2^{m-2} A_{m-1}q^{\frac{2^{m-3}-1}{2^{m-2}}}\Bigr)^n&\text{if $m\ge 4$.}
\end{cases}
\end{equation}
Now assume that $2\le t\le m-3$, $m\ge 5$. Then
\begin{align*}
W_{1,t}(1)&=-q^{1/2},\qquad W_{2,t}(1)=-2q^{1/2},\\
W_{r,t}(1)&=2^{r-1}A_rq^{(2^{r-2}-1)/2^{r-1}},\quad 3\le r\le t,\quad t\ge 3,\\
W_{t+1,t}(1)&=-2^t A_{t+1}q^{(2^{t-1}-1)/2^t},\qquad W_{t+3,t}(1)=\pm 2^{t+2}|B_{t+3}|q^{(2^{t+1}-1)/2^{t+2}}.
\end{align*}
Therefore,
\begin{align}
S_t=\,&2^{m-t-2}\Bigl((-3q^{\frac12}+\sum_{r=3}^t 2^{r-1}A_rq^{\frac{2^{r-2}-1}{2^{r-1}}}-2^t A_{t+1}q^{\frac{2^{t-1}-1}{2^t}}+2^{t+2} B_{t+3}q^{\frac{2^{t+1}-1}{2^{t+2}}})^n\Bigr.\notag\\
&+(-3q^{\frac12}+\sum_{r=3}^t 2^{r-1}A_rq^{\frac{2^{r-2}-1}{2^{r-1}}}-2^t A_{t+1}q^{\frac{2^{t-1}-1}{2^t}}-2^{t+2} B_{t+3}q^{\frac{2^{t+1}-1}{2^{t+2}}})^n\Bigl.\Bigr).\label{eq12}
\end{align}
If $m\ge 4$, then
$$
W_{1,1}(1)=-q^{1/2},\qquad W_{2,1}(1)=2q^{1/2},\qquad W_{4,1}(1)=\pm 8|B_4|q^{3/8}.
$$
This yields
\begin{equation}
\label{eq13}
S_1=2^{m-3}\cdot\Bigl((q^{\frac12}+8B_4q^{\frac38})^n+(q^{\frac12}-8B_4q^{\frac38})^n\Bigr).
\end{equation}
Finally, we have
$$
W_{1,0}(1)=q^{1/2},\qquad W_{3,0}(1)=\pm 4|B_3| q^{1/4},
$$
and so
\begin{equation}
\label{eq14}
S_0=2^{m-2}\cdot\Bigl((q^{\frac12}+4B_3q^{\frac14})^n+(q^{\frac12}-4B_3q^{\frac14})^n\Bigr).
\end{equation}
Substituting \eqref{eq10}--\eqref{eq14} into \eqref{eq5}, we obtain the asserted result.
\end{proof}

\begin{theorem}
\label{t2}
 Let $p\equiv 3\!\pmod{8}$ and $2^m\parallel (q-1)$.  If $m=3$ then
\begin{align*}
N=\,&q^{n-1}+\frac{q-1}{8q}\biggl[2\cdot\left((-q^{\frac12}+4B_3q^{\frac14}i)^n+(-q^{\frac12}-4B_3q^{\frac14}i)^n\right)+2\cdot 3^nq^{\frac n2}\biggr.\\
&+\left(-q^{\frac12}+4A_3q^{\frac14}i\right)^n+\left(-q^{\frac12}-4A_3q^{\frac14}i\right)^n\biggl.\biggr].
\end{align*}
 If $m=4$ then
\begin{align*}
N=\,&q^{n-1}+\frac{q-1}{16q}\biggl[4\cdot\left((q^{\frac12}+4B_3q^{\frac14})^n+(q^{\frac12}-4B_3q^{\frac14})^n\right)\biggr.\\
&+2\cdot\left((q^{\frac12}+8B_4q^{\frac38}i)^n+(q^{\frac12}-8B_4q^{\frac38}i)^n\right)+2\cdot\left(-3q^{\frac12}-4A_3q^{\frac14}\right)^n\\
&+\left(-3q^{\frac12}+4A_3q^{\frac14}+8A_4q^{\frac38}i\right)^n+\left(-3q^{\frac12}+4A_3q^{\frac14}-8A_4q^{\frac38}i\right)^n\biggl.\biggr].
\end{align*}
If $m\ge 5$  then
\begin{align*}
N=\,&q^{n-1}+\frac{q-1}{2^mq}\biggl[2^{m-2}\cdot\Bigl((q^{\frac12}+4B_3q^{\frac14})^n+(q^{\frac12}-4B_3q^{\frac14})^n\Bigr)\biggr.\\
&+2^{m-3}\cdot\Bigl((q^{\frac12}+8B_4q^{\frac38})^n+(q^{\frac12}-8B_4q^{\frac38})^n\Bigr)\\
&\begin{aligned}
+\sum_{t=2}^{m-4}\!2^{m-t-2}\Bigl(&(-3q^{\frac12}+\sum_{r=3}^t 2^{r-1}A_rq^{\frac{2^{r-2}-1}{2^{r-1}}}-2^t A_{t+1}q^{\frac{2^{t-1}-1}{2^t}}+2^{t+2} B_{t+3}q^{\frac{2^{t+1}-1}{2^{t+2}}})^n\Bigr.\\
+&(-3q^{\frac12}+\sum_{r=3}^t 2^{r-1}A_rq^{\frac{2^{r-2}-1}{2^{r-1}}}-2^t A_{t+1}q^{\frac{2^{t-1}-1}{2^t}}-2^{t+2} B_{t+3}q^{\frac{2^{t+1}-1}{2^{t+2}}})^n\Bigl.\Bigr)\end{aligned}\\
&\begin{aligned}
+2\cdot\Bigl(&(-3q^{\frac 12}+\sum_{r=3}^{m-3} 2^{r-1}A_rq^{\frac{2^{r-2}-1}{2^{r-1}}}-2^{m-3} A_{m-2}q^{\frac{2^{m-4}-1}{2^{m-3}}}+2^{m-1}B_mq^{\frac{2^{m-2}-1}{2^{m-1}}}i)^n\Bigr.\\
+&(-3q^{\frac 12}+\sum_{r=3}^{m-3} 2^{r-1}A_rq^{\frac{2^{r-2}-1}{2^{r-1}}}-2^{m-3} A_{m-2}q^{\frac{2^{m-4}-1}{2^{m-3}}}-2^{m-1}B_mq^{\frac{2^{m-2}-1}{2^{m-1}}}i)^n\Bigl.\Bigr)\end{aligned}\\
&+2\cdot\Bigl(-3q^{\frac 12}+\sum_{r=3}^{m-2} 2^{r-1}A_rq^{\frac{2^{r-2}-1}{2^{r-1}}}-2^{m-2} A_{m-1}q^{\frac{2^{m-3}-1}{2^{m-2}}}\Bigr)^n\\
&+\Bigl(-3q^{\frac 12}+\sum_{r=3}^{m-1}2^{r-1}A_r q^{\frac{2^{r-2}-1}{2^{r-1}}}+2^{m-1}A_m q^{\frac{2^{m-2}-1}{2^{m-1}}}i\Bigr)^n\\
&+\biggl.\Bigl(-3q^{\frac 12}+\sum_{r=3}^{m-1}2^{r-1}A_r q^{\frac{2^{r-2}-1}{2^{r-1}}}-2^{m-1}A_m q^{\frac{2^{m-2}-1}{2^{m-1}}}i\Bigr)^n\biggr].
\end{align*}
The integers $A_r$ and $|B_r|$ are uniquely determined by~\eqref{eq8}.
\end{theorem}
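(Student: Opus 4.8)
The plan is to run the scheme of the proof of Theorem~\ref{t1} essentially verbatim: read off the values $W_{r,t}(1)$ from Lemma~\ref{l14}, assemble the partial sums $S_t$ using Corollary~\ref{c1}, and substitute into \eqref{eq5}. The one structural difference is the hypothesis $2^m\parallel(q-1)$, which forces $2^{m-2}\parallel s$; in particular Lemma~\ref{l15} is no longer available for the character $\lambda$ of order $2^m$ itself, and for $r=m$ it must be replaced by Lemma~\ref{l16}, while for $3\le r\le m-1$ one still has $2^{r+1}\mid(q-1)$ and Lemma~\ref{l15} applies unchanged.

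First I would tabulate the relevant $W_{r,t}(1)$. The quadratic pieces $W_{1,t}(1)=\pm G(\eta)$ require some care: by Lemma~\ref{l3} together with $2^{m-2}\parallel s$ (so $s\equiv 2\pmod 4$ when $m=3$, whereas $4\mid s$ when $m\ge 4$) one has $G(\eta)=q^{1/2}$ if $m=3$ and $G(\eta)=-q^{1/2}$ if $m\ge 4$, which is why the case $m=3$ looks different from the rest. The biquadratic pieces $W_{2,t}(1)$ come from $G(\lambda^{2^{m-2}})=-q^{1/2}$ (Lemma~\ref{l4}); in particular $W_{2,0}(1)=0$. For $3\le r\le m-1$ the combinations $G(\lambda^{2^{m-r}})\pm G(\bar\lambda^{2^{m-r}})$ are supplied by Lemma~\ref{l15}. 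The genuinely new inputs concern $r=m$: $W_{m,m}(1)=2^{m-2}\bigl(G(\lambda)+G(\bar\lambda)\bigr)=\pm 2^{m-1}A_m q^{(2^{m-2}-1)/2^{m-1}}i$ (the case $r=t$ of Lemma~\ref{l14} combined with Lemma~\ref{l16}); $W_{m,m-2}(1)=0$ (the case $r=t+2$, which vanishes for $p\equiv 3\pmod 8$); and $W_{m,m-3}(1)=\pm 2^{m-1}|B_m| q^{(2^{m-2}-1)/2^{m-1}}i$ (the case $r=t+3$ of Lemma~\ref{l14}, using $G(\lambda)-G(\bar\lambda)=\pm 2|B_m| q^{(2^{m-2}-1)/2^{m-1}}\sqrt 2$ from Lemma~\ref{l16} and $i\sqrt 2\cdot\sqrt 2=2i$). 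I would flag the associated sign mechanism: in a term $W_{t+3,t}(1)$ the outer factor $i\sqrt 2$ multiplies $G(\lambda^{2^{m-r}})-G(\bar\lambda^{2^{m-r}})$, which carries a factor $i\sqrt 2$ when Lemma~\ref{l15} applies (so $i\sqrt 2\cdot i\sqrt 2=-2$, real) but a factor $\sqrt 2$ when Lemma~\ref{l16} applies (so $i\sqrt 2\cdot\sqrt 2=2i$, imaginary); the first happens for $t\le m-4$ and the second only at $t=m-3$. This is exactly why the $4B_3 q^{1/4}$ inside $S_0$, and the $8B_4 q^{3/8}$ inside $S_1$, come out imaginary in the smallest case ($m=3$, respectively $m=4$) and real otherwise.

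Next I would assemble $\sum_{t=0}^{m}S_t$ from Corollary~\ref{c1}. For $m\ge 5$: $S_{m-1}+S_m$ gives the last two displayed terms (those containing $\pm 2^{m-1}A_m q^{(2^{m-2}-1)/2^{m-1}}i$); $S_{m-2}$ gives the term $2\bigl(-3q^{1/2}+\sum_{r=3}^{m-2}2^{r-1}A_r q^{(2^{r-2}-1)/2^{r-1}}-2^{m-2}A_{m-1}q^{(2^{m-3}-1)/2^{m-2}}\bigr)^n$; $S_{m-3}$, handled by the $t\le m-3$ clause of Corollary~\ref{c1} with $W_{m,m-3}(1)$ playing the role of the $\pm$ term, gives the two terms containing $\pm 2^{m-1}B_m q^{(2^{m-2}-1)/2^{m-1}}i$; the $S_t$ with $2\le t\le m-4$ give the displayed sum over $t$ (here $t+3\le m-1$, so Lemma~\ref{l15} produces $B_{t+3}$); and $S_1$, $S_0$ give the $8B_4 q^{3/8}$ and $4B_3 q^{1/4}$ terms. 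For $m=3$ and $m=4$ the generic range $2\le t\le m-4$ and parts of the boundary block are empty or coincide, so I would simply list the few nonzero $W_{r,t}(1)$ and read off $S_0,\dots,S_m$ directly; for instance when $m=3$ one finds $S_0=2\bigl((-q^{1/2}+4B_3 q^{1/4}i)^n+(-q^{1/2}-4B_3 q^{1/4}i)^n\bigr)$, $S_1=S_{m-2}=2\bigl(W_{1,1}(1)+W_{2,1}(1)\bigr)^n=2(3q^{1/2})^n$, and $S_2+S_3=(-q^{1/2}+4A_3 q^{1/4}i)^n+(-q^{1/2}-4A_3 q^{1/4}i)^n$.

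Finally, substituting the resulting $S_0,\dots,S_m$ into \eqref{eq5} gives the three asserted formulas. All of the sign ambiguities coming from Lemmas~\ref{l15} and~\ref{l16}, and the sign indeterminacy of the $B_r$ (and of $A_m$ inside $W_{m,m}(1)$), are immaterial, because each such quantity occurs only inside an expression of the form $(X+Y)^n+(X-Y)^n$, which is invariant under $Y\mapsto-Y$; this is what lets one write $A_r$, $B_r$ in the statement rather than $\pm|A_r|$, $\pm|B_r|$. The main obstacle is purely organizational: partitioning the index $t$ correctly into the generic range $2\le t\le m-4$ together with the boundary values $t=0,1,m-3,m-2$ and $t\in\{m-1,m\}$, and keeping straight the two sign issues above --- that $G(\eta)$, hence $W_{1,t}(1)$, switches sign between $m=3$ and $m\ge 4$, and that the $r=m$ contributions must be taken from Lemma~\ref{l16} instead of Lemma~\ref{l15}. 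Once these are settled the remainder is a routine transcription of Corollary~\ref{c1}, entirely parallel to the proof of Theorem~\ref{t1}.
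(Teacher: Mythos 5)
Your proposal is correct and follows essentially the same route as the paper's own proof: read off the $W_{r,t}(1)$ from Lemma~\ref{l14} (with Lemma~\ref{l16} supplying the $r=m$ contributions in place of Lemma~\ref{l15}, and the sign of $G(\eta)$ separating $m=3$ from $m\ge 4$), assemble the $S_t$ via Corollary~\ref{c1}, and substitute into \eqref{eq5}. The sample values you compute ($W_{m,m}(1)$, $W_{m,m-3}(1)$, and the full $m=3$ case) agree with the paper's \eqref{eq15}--\eqref{eq20}, and your observation that all sign ambiguities cancel in expressions of the form $(X+Y)^n+(X-Y)^n$ is exactly how the paper resolves them.
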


\begin{proof}
Since $2^m\parallel(q-1)$, we find that
\begin{align*}
W_{1,m}(1)&=\begin{cases}
q^{1/2}&\text{if $m=3$,}\\
-q^{1/2}&\text{if $m\ge 4$,}
\end{cases}\qquad W_{2,m}(1)=-2q^{1/2},\\
W_{r,m}(1)&=2^{r-1}A_rq^{(2^{r-2}-1)/2^{r-1}},\quad 3\le r\le m-1,\quad m\ge 4,\\
W_{m,m}(1)&=\pm 2^{m-1}A_m q^{(2^{m-2}-1)/2^{m-1}}i.
\end{align*}
This yields
\begin{equation}
\label{eq15}
S_{m-1}+S_m=\left(-q^{\frac12}+4A_3q^{\frac14}i\right)^n+\left(-q^{\frac12}-4A_3q^{\frac14}i\right)^n
\end{equation}
if $m=3$, and
\begin{align}
S_{m-1}+S_m=\,&\Bigl(-3q^{\frac 12}+\sum_{r=3}^{m-1}2^{r-1}A_r q^{\frac{2^{r-2}-1}{2^{r-1}}}+2^{m-1}A_m q^{\frac{2^{m-2}-1}{2^{m-1}}}i\Bigr)^n\notag\\
&+\biggl.\Bigl(-3q^{\frac 12}+\sum_{r=3}^{m-1}2^{r-1}A_r q^{\frac{2^{r-2}-1}{2^{r-1}}}-2^{m-1}A_m q^{\frac{2^{m-2}-1}{2^{m-1}}}i\Bigr)^n\label{eq16}
\end{align}
if $m\ge 4$. Furthermore,
\begin{align*}
W_{1,m-2}(1)&=\begin{cases}
q^{1/2}&\text{if $m=3$,}\\
-q^{1/2}&\text{if $m\ge 4$,}
\end{cases}\qquad
W_{2,m-2}(1)=\begin{cases}
2q^{1/2}&\text{if $m=3$,}\\
-2q^{1/2}&\text{if $m\ge 4$,}
\end{cases}\\
W_{r,m-2}(1)&=2^{r-1}A_rq^{(2^{r-2}-1)/2^{r-1}},\quad 3\le r\le m-2,\quad m\ge 5,\\
W_{m-1,m-2}(1)&=-2^{m-2}A_{m-1}q^{(2^{m-3}-1)/2^{m-2}},\quad m\ge 4.
\end{align*}
Hence
\begin{equation}
\label{eq17}
S_{m-2}=\begin{cases}
2\cdot 3^nq^{\frac n2}&\text{if $m=3$,}\\
2\cdot\Bigl(-3q^{\frac 12}+\sum\limits_{r=3}^{m-2} 2^{r-1}A_rq^{\frac{2^{r-2}-1}{2^{r-1}}}-2^{m-2} A_{m-1}q^{\frac{2^{m-3}-1}{2^{m-2}}}\Bigr)^n&\text{if $m\ge 4$.}
\end{cases}
\end{equation}
If $m\ge 4$, then
\begin{align*}
W_{1,m-3}(1)&=-q^{1/2},\qquad W_{2,m-3}(1)=\begin{cases}
2q^{1/2}&\text{if $m=4$,}\\
-2q^{1/2}&\text{if $m\ge 5$,}
\end{cases}\\
W_{r,m-3}(1)&=2^{r-1}A_rq^{(2^{r-2}-1)/2^{r-1}},\quad 3\le r\le m-3,\quad m\ge 6,\\
W_{m-2,m-3}(1)&=-2^{m-3}A_{m-2}q^{(2^{m-4}-1)/2^{m-3}},\quad m\ge 5,\\
W_{m,m-3}(1)&=\pm 2^{m-1}|B_m|q^{(2^{m-2}-1)/2^{m-1}}i.
\end{align*}
Therefore,
\begin{equation}
\label{eq18}
S_{m-3}=2\cdot\left((q^{\frac12}+8B_4q^{\frac38}i)^n+(q^{\frac12}-8B_4q^{\frac38}i)^n\right)
\end{equation}
if $m=4$, and
\begin{align}
S_{m-3}=\,&2\cdot\Bigl((-3q^{\frac 12}+\sum_{r=3}^{m-3} 2^{r-1}A_rq^{\frac{2^{r-2}-1}{2^{r-1}}}-2^{m-3} A_{m-2}q^{\frac{2^{m-4}-1}{2^{m-3}}}+2^{m-1}B_mq^{\frac{2^{m-2}-1}{2^{m-1}}}i)^n\Bigr.\notag\\
&+(-3q^{\frac 12}+\sum_{r=3}^{m-3} 2^{r-1}A_rq^{\frac{2^{r-2}-1}{2^{r-1}}}-2^{m-3} A_{m-2}q^{\frac{2^{m-4}-1}{2^{m-3}}}-2^{m-1}B_mq^{\frac{2^{m-2}-1}{2^{m-1}}}i)^n\Bigl.\Bigr)\label{eq19}
\end{align}
if $m\ge 5$. It is easy to check that $S_2,\dots,S_{m-4}$ (for $m\ge 6$) and $S_1$ (for $m\ge 5$) are determined by \eqref{eq12} and \eqref{eq13}, respectively. Moreover, if $m\ge 4$, then $S_0$ is determined by \eqref{eq14}. For $m=3$, we have
$$
W_{1,0}(1)=-q^{1/2},\qquad W_{3,0}(1)=\pm 4|B_3|q^{1/4}i,
$$
and so
\begin{equation}
\label{eq20}
S_0=2\cdot\left((-q^{\frac12}+4B_3q^{\frac14}i)^n+(-q^{\frac12}-4B_3q^{\frac14}i)^n\right).
\end{equation}
Substituting \eqref{eq12}--\eqref{eq20} into \eqref{eq5}, we obtain the desired result.
\end{proof}

\section{The case $p\equiv -3\pmod{8}$}
\label{s4}

In this section, let $p\equiv -3\!\pmod{8}$, $q=p^s\equiv 1\!\pmod{2^m}$, $m\ge 2$. As in the previous sections, $\lambda$ denotes a fixed character of order $2^m$ on $\mathbb F_q$.

For $r=1,2,\dots,m-1$, define  the integers $C_r$ and $D_r$ by
\begin{equation}
\label{eq21}
p^{s/2^{r-1}}=C_r^2+D_r^2,\qquad C_r\equiv -1\!\!\pmod{4},\qquad p\nmid C_r.
\end{equation}
If $2^{m+1}\mid(q-1)$ (or, equivalently, $2^{m-1}\mid s$), we extend this notation to $r=m$. It is well known \cite[Lemma~3.0.1]{BEW} that for each fixed $r$, \eqref{eq21} determines $C_r$ uniquely but determines $D_r$ only up to sign. Further, if $\chi$ is a biquadratic character on $\mathbb F_{p^{s/2^{r-1}}}$ then $J(\chi)=C_r\pm|D_r| i$  (see \cite[Proposition~2]{KR}). Appealing to Lemma~\ref{l13}, we obtain the following result.

\begin{lemma}
\label{l17}
Let $r$ be an integer with $2^{r+1}\mid(q-1)$ and $2\le r\le m$. Then
$$
G(\lambda^{2^{m-r}})+G(\bar\lambda^{2^{m-r}})=
\begin{cases}
2C_r q^{(2^{r-1}-1)/2^r} &\text{if $2^{r+2}\mid(q-1)$,}\\
(-1)^{r-1}\cdot 2C_r q^{(2^{r-1}-1)/2^r} &\text{if $2^{r+1}\parallel(q-1)$,}
\end{cases}
$$
and
$$
G(\lambda^{2^{m-r}})-G(\bar\lambda^{2^{m-r}})=
\pm 2|D_r| q^{(2^{r-1}-1)/2^r}i.
$$
\end{lemma}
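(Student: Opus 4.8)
The plan is to combine Lemma~\ref{l13} with the known formula for the Jacobi sum of a biquadratic character, exactly paralleling the proof of Lemma~\ref{l15} in the previous ($p\equiv 3\!\pmod 8$) case. First I would fix an integer $r$ with $2\le r\le m$ and $2^{r+1}\mid(q-1)$, so that $\lambda^{2^{m-r}}$ is a character of order $2^r$ on $\mathbb F_q$. Lemma~\ref{l13} (applied to $\psi=\lambda^{2^{m-r}}$) then tells us that $(\lambda^{2^{m-r}})^{2^{r-2}}=\lambda^{2^{m-2}}$ is the lift of some biquadratic character $\chi$ on $\mathbb F_{p^{s/2^{r-1}}}$, and that
$$
G(\lambda^{2^{m-r}})=(-1)^{s(r-1)/2^{r-1}}q^{(2^{r-1}-1)/2^r}J(\chi).
$$
By \cite[Proposition~2]{KR} we have $J(\chi)=C_r\pm|D_r|i$, where $C_r,D_r$ are the integers from \eqref{eq21}; note that \eqref{eq21} indeed applies, since $2^{r+1}\mid(q-1)$ forces $2^{r-1}\mid s$, which is the hypothesis under which $C_r,D_r$ were defined for the index $r$. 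Writing $\varepsilon=(-1)^{s(r-1)/2^{r-1}}$, we get $G(\lambda^{2^{m-r}})=\varepsilon q^{(2^{r-1}-1)/2^r}(C_r\pm|D_r|i)$.

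Next I would take the conjugate character: $\bar\lambda^{2^{m-r}}$ also has order $2^r$, and Lemma~\ref{l13} applied to it gives $G(\bar\lambda^{2^{m-r}})=\varepsilon q^{(2^{r-1}-1)/2^r}J(\bar\chi)$ with the \emph{same} sign $\varepsilon$ (the prefactor depends only on $s$ and $r$). Since $\chi$ has order $4$, $\bar\chi=\chi^3$ is the other biquadratic character and $J(\bar\chi)=\overline{J(\chi)}=C_r\mp|D_r|i$. Therefore
$$
G(\lambda^{2^{m-r}})+G(\bar\lambda^{2^{m-r}})=\varepsilon\cdot 2C_r q^{(2^{r-1}-1)/2^r},\qquad
G(\lambda^{2^{m-r}})-G(\bar\lambda^{2^{m-r}})=\pm\varepsilon\cdot 2|D_r|q^{(2^{r-1}-1)/2^r}i.
$$
Since the sign on $D_r$ is in any case only determined up to sign, the $\varepsilon$ may be absorbed there, giving the stated formula $G(\lambda^{2^{m-r}})-G(\bar\lambda^{2^{m-r}})=\pm 2|D_r|q^{(2^{r-1}-1)/2^r}i$. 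For the first equation, however, $\varepsilon$ must be pinned down explicitly.

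The main obstacle is precisely the evaluation of $\varepsilon=(-1)^{s(r-1)/2^{r-1}}$ under the two sub-hypotheses in the statement. Write $\nu=\nu_2(s)$; since $p\equiv-3\!\pmod 8$ one has $\nu_2(p^2-1)=3$, hence $\nu_2(q-1)=\nu_2(p^2-1)+\nu_2(s)-1=\nu+2$ (as in the proof of Lemma~\ref{l12}). The hypothesis $2^{r+2}\mid(q-1)$ means $\nu+2\ge r+2$, i.e.\ $\nu\ge r$, so $2^{r-1}\mid s$ with quotient $s/2^{r-1}$ even; then $s(r-1)/2^{r-1}$ is even and $\varepsilon=1$, yielding the first case. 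The hypothesis $2^{r+1}\parallel(q-1)$ means $\nu=r-1$, so $s/2^{r-1}$ is odd, and $s(r-1)/2^{r-1}$ has the same parity as $r-1$; hence $\varepsilon=(-1)^{r-1}$, yielding the second case. The only delicate points are making sure \eqref{eq21} legitimately applies for the given $r$ (it does, since $2^{r+1}\mid(q-1)\Rightarrow 2^{r-1}\mid s$, which is exactly the divisibility under which $C_r,D_r$ were defined, including the extension to $r=m$ when $2^{m+1}\mid(q-1)$) and that the $\varepsilon$-ambiguity in the difference is harmless because $|D_r|$ appears with an undetermined sign anyway. Everything else is the same bookkeeping as in Lemma~\ref{l15}.
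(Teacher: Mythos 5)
Your proposal is correct and follows essentially the same route as the paper, which obtains Lemma~\ref{l17} directly by combining Lemma~\ref{l13} with the evaluation $J(\chi)=C_r\pm|D_r|i$ from \cite[Proposition~2]{KR}. Your explicit determination of the sign $(-1)^{s(r-1)/2^{r-1}}$ via $\nu_2(q-1)=\nu_2(s)+2$ in the two cases $2^{r+2}\mid(q-1)$ and $2^{r+1}\parallel(q-1)$ is exactly the bookkeeping the paper leaves implicit, and it matches the stated formulas.
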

\medskip

To find $G(\lambda)\pm G(\bar\lambda)$ in the case when $2^m\parallel(q-1)$, we need the next result.

\begin{lemma}
\label{l18}
Assume that $2^m\parallel(q-1)$. Then
$$
G(\lambda)+G(\bar\lambda)=\pm q^{(2^{m-1}-1)/2^m}i\sqrt{2(q^{1/2^{m-1}}-(-1)^m C_{m-1})}
$$
and
$$
G(\lambda)-G(\bar\lambda)=\pm q^{(2^{m-1}-1)/2^m}\sqrt{2(q^{1/2^{m-1}}+(-1)^m C_{m-1})}\, .
$$
\end{lemma}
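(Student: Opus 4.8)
The plan is to follow the proof of Lemma~\ref{l16} in structure. Since $2^m\parallel(q-1)$, writing $q-1=2^mu$ with $u$ odd and choosing a generator $g$ of $\mathbb F_q^*$ with $\lambda(g)=\zeta_{2^m}$ gives $\lambda(-1)=\lambda(g^{(q-1)/2})=\zeta_{2^m}^{2^{m-1}u}=(-1)^u=-1$; hence Lemma~\ref{l2}(a) yields $G(\lambda)G(\bar\lambda)=-q$ and therefore
\[
\bigl(G(\lambda)\pm G(\bar\lambda)\bigr)^2=G(\lambda)^2+G(\bar\lambda)^2\mp 2q.
\]
So everything reduces to computing $G(\lambda)^2+G(\bar\lambda)^2$.

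To evaluate $G(\lambda)^2+G(\bar\lambda)^2$ I would split into $m=2$ and $m\ge3$, noting first (a short count of $2$-adic valuations, as in the proof of Lemma~\ref{l12}) that $m=2$ forces $s$ odd while $m\ge 3$ forces $s$ even with $2^{m-2}\parallel s$; in particular $s/2^{m-2}$ is odd in all cases. If $m=2$, then $\lambda^2=\eta$ and $G(\eta)=q^{1/2}$ by Lemma~\ref{l3} (here $p\equiv 1\!\pmod 4$, $s$ odd); by Lemma~\ref{l6}, $G(\lambda)^2=G(\eta)J(\lambda)$ and $G(\bar\lambda)^2=G(\eta)J(\bar\lambda)$, and since $J(\bar\lambda)=\overline{J(\lambda)}$ while $J(\lambda)=C_1\pm|D_1|i$ by the Jacobi-sum evaluation recalled before Lemma~\ref{l17}, I obtain $G(\lambda)^2+G(\bar\lambda)^2=2C_1q^{1/2}$. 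If $m\ge 3$, then Lemma~\ref{l11} gives $\lambda(4)=(-1)^{s/2^{m-2}}=-1$, Lemma~\ref{l3} gives $G(\eta)=-q^{1/2}$, Lemma~\ref{l10} gives $G(\lambda)=G(\lambda\eta)$ (and $G(\bar\lambda)=G(\bar\lambda\eta)$), and Lemma~\ref{l5} gives $G(\lambda)^2=G(\lambda)G(\lambda\eta)=\bar\lambda(4)G(\lambda^2)G(\eta)=q^{1/2}G(\lambda^2)$, and likewise $G(\bar\lambda)^2=q^{1/2}G(\bar\lambda^2)$. Now apply Lemma~\ref{l17} with $r=m-1$: this is legitimate because $2\le m-1\le m$, and $2^m\parallel(q-1)$ puts us in its second branch, giving $G(\lambda^2)+G(\bar\lambda^2)=(-1)^{m}\cdot 2C_{m-1}q^{(2^{m-2}-1)/2^{m-1}}$. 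Hence $G(\lambda)^2+G(\bar\lambda)^2=(-1)^m\cdot 2C_{m-1}q^{(2^{m-1}-1)/2^{m-1}}$ after the exponent simplification $\tfrac12+\tfrac{2^{m-2}-1}{2^{m-1}}=\tfrac{2^{m-1}-1}{2^{m-1}}$; and since $(-1)^2\cdot 2C_1q^{1/2}$ agrees with this, the formula $G(\lambda)^2+G(\bar\lambda)^2=(-1)^m\cdot 2C_{m-1}q^{(2^{m-1}-1)/2^{m-1}}$ holds for all $m\ge 2$.

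To finish, I would feed this into the identity displayed above and factor using $q=q^{1/2^{m-1}}\cdot q^{(2^{m-1}-1)/2^{m-1}}$, getting
\[
\bigl(G(\lambda)+G(\bar\lambda)\bigr)^2=-2q^{(2^{m-1}-1)/2^{m-1}}\bigl(q^{1/2^{m-1}}-(-1)^mC_{m-1}\bigr)
\]
and
\[
\bigl(G(\lambda)-G(\bar\lambda)\bigr)^2=2q^{(2^{m-1}-1)/2^{m-1}}\bigl(q^{1/2^{m-1}}+(-1)^mC_{m-1}\bigr).
\]
Both factors $q^{1/2^{m-1}}\mp(-1)^mC_{m-1}$ are nonnegative because $C_{m-1}^2\le C_{m-1}^2+D_{m-1}^2=p^{s/2^{m-2}}=q^{1/2^{m-2}}=\bigl(q^{1/2^{m-1}}\bigr)^2$; extracting square roots and writing $q^{(2^{m-1}-1)/2^{m-1}}=\bigl(q^{(2^{m-1}-1)/2^{m}}\bigr)^2$ gives precisely the two claimed identities.

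I expect the only real obstacles to be bookkeeping ones: pinning down the $2$-adic valuation of $s$ so as to fix the signs of $G(\eta)$ and $\lambda(4)$, recognising that $2^m\parallel(q-1)$ places the application of Lemma~\ref{l17} to $\lambda^2$ in its \emph{second} (not its first) branch, and carrying the sign $(-1)^m$ through unchanged. The one genuine wrinkle is that the case $m=2$ lies outside the hypotheses of Lemmas~\ref{l10} and~\ref{l11} and so must be handled via Lemma~\ref{l6} directly; it merges into the same final formula only because there the exponent $(2^{m-2}-1)/2^{m-1}$ degenerates to $0$ and $G(\eta)=q^{1/2}$.
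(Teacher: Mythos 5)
Your proposal is correct and follows essentially the same route as the paper, whose proof of Lemma~\ref{l18} simply says to argue as in Lemma~\ref{l16} and records the two squared identities before extracting roots via $q^{1/2^{m-1}}>|C_{m-1}|$. Your filled-in details (the split $m=2$ via Lemma~\ref{l6} and the Jacobi-sum evaluation versus $m\ge 3$ via Lemmas~\ref{l3}, \ref{l5}, \ref{l10}, \ref{l11} and the second branch of Lemma~\ref{l17}, yielding $G(\lambda)^2+G(\bar\lambda)^2=(-1)^m\cdot 2C_{m-1}q^{(2^{m-1}-1)/2^{m-1}}$) are exactly the intended argument and check out.
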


\begin{proof}
By employing the same type of argument as in the proof of Lemma~\ref{l16}, we see that
\begin{align*}
\bigl(G(\lambda)+G(\bar\lambda)\bigr)^2&=-2q^{(2^{m-1}-1)/2^{m-1}}\bigl(q^{1/2^{m-1}}-(-1)^m C_{m-1}\bigr),\\
\bigl(G(\lambda)-G(\bar\lambda)\bigr)^2&=\hphantom{-}2q^{(2^{m-1}-1)/2^{m-1}}\bigl(q^{1/2^{m-1}}+(-1)^m C_{m-1}\bigr).
\end{align*}
As $q^{1/2^{m-2}}=p^{s/2^{m-2}}=C_{m-1}^2+D_{m-1}^2$, we have $q^{1/2^{m-1}}>|C_{m-1}|$, and the result follows.
\end{proof}

We are now in a position to derive explicit formulas for $N$ when $p\equiv -3\!\pmod{8}$.  We shall be using Lemmas~\ref{l14}, \ref{l17}, \ref{l18} and Corollary~\ref{c1} without mention.

\begin{theorem}
\label{t3} 
Let $p\equiv -3\!\pmod{8}$ and $2^{m+1}\mid (q-1)$. Then
\begin{align*}
N=\,&q^{n-1}+\frac{q-1}{2^mq}\cdot\biggl[2^{m-2}\cdot\Bigl((q^{\frac12}+2D_2q^{\frac14})^n+(q^{\frac12}-2D_2q^{\frac14})^n\Bigr)\biggr.\\
&\begin{aligned}
+\sum_{t=1}^{m-2}2^{m-t-2}\Bigl(&(-q^{\frac12}+\sum_{r=2}^t 2^{r-1}C_rq^{\frac{2^{r-1}-1}{2^{r}}}-2^t C_{t+1}q^{\frac{2^{t}-1}{2^{t+1}}}+2^{t+1} D_{t+2}q^{\frac{2^{t+1}-1}{2^{t+2}}})^n\Bigr.\\
+&(-q^{\frac12}+\sum_{r=2}^t 2^{r-1}C_rq^{\frac{2^{r-1}-1}{2^{r}}}-2^t C_{t+1}q^{\frac{2^{t}-1}{2^{t+1}}}-2^{t+1} D_{t+2}q^{\frac{2^{t+1}-1}{2^{t+2}}})^n\Bigl.\Bigr)\end{aligned}\\
&+\Bigl(-q^{\frac 12}+\sum_{r=2}^{m-1}2^{r-1}C_r q^{\frac{2^{r-1}-1}{2^{r}}}-2^{m-1}C_m q^{\frac{2^{m-1}-1}{2^{m}}}\Bigr)^n\\
&+\biggl.\Bigl(-q^{\frac 12}+\sum_{r=2}^{m}2^{r-1}C_r q^{\frac{2^{r-1}-1}{2^{r}}}\Bigr)^n\biggr].
\end{align*}
The integers $C_r$ and $|D_r|$ are uniquely determined by~\eqref{eq21}.
\end{theorem}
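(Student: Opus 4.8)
The plan is to follow the same route as in the proofs of Theorems~\ref{t1} and~\ref{t2}: start from the decomposition \eqref{eq5}, namely $N=q^{n-1}+\frac{q-1}{2^mq}\sum_{t=0}^m S_t$, assemble each $S_t$ from Corollary~\ref{c1}, and feed into it explicit closed forms for the quantities $W_{r,t}(1)$ obtained from Lemma~\ref{l14}.

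First I would fix the arithmetic setup. Since $p\equiv -3\pmod 8$ and $2^{m+1}\mid(q-1)$ with $m\ge 2$, the same $2$-adic valuation computation as in the proof of Lemma~\ref{l12} shows that $s$ is even and $2^{m-1}\mid s$. As $s$ is even, Lemma~\ref{l3} gives $G(\eta)=-q^{1/2}$, whence $W_{1,0}(1)=q^{1/2}$ and $W_{1,t}(1)=-q^{1/2}$ for $t\ge 1$. Moreover, for every $r$ with $2\le r\le m$ we have $2^{r+1}\le 2^{m+1}\mid(q-1)$, so Lemma~\ref{l17} applies: $G(\lambda^{2^{m-r}})+G(\bar\lambda^{2^{m-r}})=2C_rq^{(2^{r-1}-1)/2^r}$ (a sign $(-1)^{r-1}$ may appear only when $r=m$ and $2^{m+1}\parallel(q-1)$), while $G(\lambda^{2^{m-r}})-G(\bar\lambda^{2^{m-r}})=\pm 2|D_r|q^{(2^{r-1}-1)/2^r}i$. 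Substituting these into the cases of Lemma~\ref{l14}---and recalling that for $p\equiv -3\pmod 8$ only the indices $r\le t+2$ contribute---one gets, for $c_0=1$, the closed forms $W_{r,t}(1)=2^{r-1}C_rq^{(2^{r-1}-1)/2^r}$ for $r\le t$, $W_{t+1,t}(1)=-2^{t}C_{t+1}q^{(2^{t}-1)/2^{t+1}}$ (for $t\ge 1$), $W_{t+2,t}(1)=\pm 2^{t+1}D_{t+2}q^{(2^{t+1}-1)/2^{t+2}}$, and $W_{2,0}(1)=\pm 2D_2q^{1/4}$; the only genuine work here is to check that the exponents of $q$ line up.

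With these values in hand I would compute the $S_t$ from Corollary~\ref{c1}. For $t=0$ one obtains $S_0=2^{m-2}\bigl((q^{1/2}+2D_2q^{1/4})^n+(q^{1/2}-2D_2q^{1/4})^n\bigr)$. For $1\le t\le m-2$ one has $\sum_{r=1}^{t+1}W_{r,t}(1)=-q^{1/2}+\sum_{r=2}^{t}2^{r-1}C_rq^{(2^{r-1}-1)/2^r}-2^{t}C_{t+1}q^{(2^{t}-1)/2^{t+1}}$ together with the ``$\pm$'' term $W_{t+2,t}(1)$, which produces exactly the $t$th summand of the asserted formula. For the top piece, $\sum_{r=1}^{m-1}W_{r,m}(1)=-q^{1/2}+\sum_{r=2}^{m-1}2^{r-1}C_rq^{(2^{r-1}-1)/2^r}$ and $W_{m,m}(1)=\pm 2^{m-1}C_mq^{(2^{m-1}-1)/2^m}$, so Corollary~\ref{c1} yields $S_{m-1}+S_m$ as the last two displayed terms. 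Substituting all of this into \eqref{eq5} gives the claimed identity. The sign ambiguities of $D_2,\dots,D_m$ and the possible $(-1)^{m-1}$ on $C_m$ are harmless, since each of them occurs only inside a pair of the form $(\alpha+\beta)^n+(\alpha-\beta)^n$, which is invariant under $\beta\mapsto-\beta$.

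The one place demanding a little care---the anticipated obstacle, though a mild one---is the collapse of index ranges at the small values $m=2$ and $m=3$: when $m=2$ the sum $\sum_{t=1}^{m-2}$ is empty and $W_{2,2}(1)$ must be recognized simultaneously as the ``$C_2$'' contribution and as the ``$C_m$'' contribution, while when $m=3$ the single term $t=1$ of the middle sum has to be matched against the general $t$th summand. One should therefore verify directly that the general formula specializes correctly in these cases. All remaining computations are the same Gauss-sum bookkeeping already performed for $p\equiv 3\pmod 8$.
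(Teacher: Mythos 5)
Your proposal is correct and follows essentially the same route as the paper's own proof: it starts from \eqref{eq5}, evaluates $W_{r,t}(1)$ via Lemmas~\ref{l14} and \ref{l17} (noting that the sign ambiguities in $D_r$ and in $C_m$ disappear inside the symmetric pairs of Corollary~\ref{c1}), and assembles $S_0$, $S_t$ for $1\le t\le m-2$, and $S_{m-1}+S_m$ exactly as in \eqref{eq22}--\eqref{eq24}. The extra care you flag for the degenerate ranges at $m=2,3$ is a sound check, and the argument goes through there just as in the paper.
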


\begin{proof}
We have
\begin{align*}
W_{1,m}(1)&=-q^{1/2},\\
W_{r,m}(1)&=2^{r-1}C_r q^{(2^{r-1}-1)/2^r},\quad 2\le r\le m-1,\quad m\ge 3,\\
W_{m,m}(1)&=\pm 2^{m-1}C_m q^{(2^{m-1}-1)/2^m}.
\end{align*}
Consequently,
\begin{align}
S_{m-1}+S_m=\,&\Bigl(-q^{\frac 12}+\sum_{r=2}^{m-1}2^{r-1}C_r q^{\frac{2^{r-1}-1}{2^{r}}}-2^{m-1}C_m q^{\frac{2^{m-1}-1}{2^{m}}}\Bigr)^n\notag\\
&+\biggl.\Bigl(-q^{\frac 12}+\sum_{r=2}^{m}2^{r-1}C_r q^{\frac{2^{r-1}-1}{2^{r}}}\Bigr)^n.\label{eq22}
\end{align}
Assume now that $1\le t\le m-2$, $m\ge 3$. Then
\begin{align*}
W_{1,t}(1)&=-q^{1/2},\\
W_{r,t}(1)&=2^{r-1}C_r q^{(2^{r-1}-1)/2^r},\quad 2\le r\le t,\quad t\ge 2,\\
W_{t+1,t}(1)&=-2^t C_{t+1}q^{(2^t-1)/2^{t+1}},\qquad W_{t+2,t}(1)=\pm 2^{t+1}|D_{t+2}|q^{(2^{t+1}-1)/2^{t+2}}.
\end{align*}
Thus
\begin{align}
S_t=\,&2^{m-t-2}\Bigl((-q^{\frac12}+\sum_{r=2}^t 2^{r-1}C_rq^{\frac{2^{r-1}-1}{2^{r}}}-2^t C_{t+1}q^{\frac{2^{t}-1}{2^{t+1}}}+2^{t+1} D_{t+2}q^{\frac{2^{t+1}-1}{2^{t+2}}})^n\Bigr.\notag\\
+&(-q^{\frac12}+\sum_{r=2}^t 2^{r-1}C_rq^{\frac{2^{r-1}-1}{2^{r}}}-2^t C_{t+1}q^{\frac{2^{t}-1}{2^{t+1}}}-2^{t+1} D_{t+2}q^{\frac{2^{t+1}-1}{2^{t+2}}})^n\Bigl.\Bigr).\label{eq23}
\end{align}
Finally,
$$
W_{1,0}(1)=q^{1/2},\qquad W_{2,0}(1)=\pm 2|D_2|q^{1/4},
$$
and so
\begin{equation}
\label{eq24}
S_0=2^{m-2}\cdot\Bigl((q^{\frac12}+2D_2q^{\frac14})^n+(q^{\frac12}-2D_2q^{\frac14})^n\Bigr).
\end{equation}
Substituting \eqref{eq22}--\eqref{eq24} into \eqref{eq5}, we obtain the assertion of the theorem.
\end{proof}

\begin{theorem}
\label{t4}
Let $p\equiv -3\!\pmod{8}$ and $2^m\parallel (q-1)$.  If $m=2$ then
\begin{align*}
N=\,&q^{n-1}+\frac{q-1}{4q}\biggl[\Bigl(-q^{\frac12}+q^{\frac14}i\sqrt{2(q^{\frac12}+C_1)}\,\Bigr)^n+\Bigl(-q^{\frac12}-q^{\frac14}i\sqrt{2(q^{\frac12}+C_1)}\,\Bigr)^n\biggr.\\
&+\biggl.\Bigl(q^{\frac12}+q^{\frac14}i\sqrt{2(q^{\frac12}-C_1)}\,\Bigr)^n+\Bigl(q^{\frac12}-q^{\frac14}i\sqrt{2(q^{\frac12}-C_1)}\,\Bigr)^n\biggr].
\end{align*}
If $m\ge 3$  then
\begin{align*}
N=\,&q^{n-1}+\frac{q-1}{2^mq}\cdot\biggl[2^{m-2}\cdot\Bigl((q^{\frac12}+2D_2q^{\frac14})^n+(q^{\frac12}-2D_2q^{\frac14})^n\Bigr)\biggr.\\
&\begin{aligned}
+\sum_{t=1}^{m-3}2^{m-t-2}\Bigl(&(-q^{\frac12}+\sum_{r=2}^t 2^{r-1}C_rq^{\frac{2^{r-1}-1}{2^{r}}}-2^t C_{t+1}q^{\frac{2^{t}-1}{2^{t+1}}}+2^{t+1} D_{t+2}q^{\frac{2^{t+1}-1}{2^{t+2}}})^n\Bigr.\\
+&(-q^{\frac12}+\sum_{r=2}^t 2^{r-1}C_rq^{\frac{2^{r-1}-1}{2^{r}}}-2^t C_{t+1}q^{\frac{2^{t}-1}{2^{t+1}}}-2^{t+1} D_{t+2}q^{\frac{2^{t+1}-1}{2^{t+2}}})^n\Bigl.\Bigr)\end{aligned}\\
&\begin{aligned}
+\Bigl(-q^{\frac 12}+\sum_{r=2}^{m-2}2^{r-1}C_r q^{\frac{2^{r-1}-1}{2^{r}}}&+2^{m-2}C_{m-1} q^{\frac{2^{m-2}-1}{2^{m-1}}}\\ &+2^{m-2}q^{\frac{2^{m-1}-1}{2^{m}}}i\sqrt{2(q^{\frac1{2^{m-1}}}-C_{m-1})}\,\Bigr)^n
\end{aligned}\\
\end{align*}
\begin{align*}
&\begin{aligned}
+\Bigl(-q^{\frac 12}+\sum_{r=2}^{m-2}2^{r-1}C_r q^{\frac{2^{r-1}-1}{2^{r}}}&+2^{m-2}C_{m-1} q^{\frac{2^{m-2}-1}{2^{m-1}}}\\ &-2^{m-2}q^{\frac{2^{m-1}-1}{2^{m}}}i\sqrt{2(q^{\frac1{2^{m-1}}}-C_{m-1})}\,\Bigr)^n
\end{aligned}\\
&\begin{aligned}
+\Bigl(-q^{\frac 12}+\sum_{r=2}^{m-2}2^{r-1}C_r q^{\frac{2^{r-1}-1}{2^{r}}}&-2^{m-2}C_{m-1} q^{\frac{2^{m-2}-1}{2^{m-1}}}\\ &+2^{m-2}q^{\frac{2^{m-1}-1}{2^{m}}}i\sqrt{2(q^{\frac1{2^{m-1}}}+C_{m-1})}\,\Bigr)^n
\end{aligned}\\
&\begin{aligned}
+\Bigl(-q^{\frac 12}+\sum_{r=2}^{m-2}2^{r-1}C_r q^{\frac{2^{r-1}-1}{2^{r}}}&-2^{m-2}C_{m-1} q^{\frac{2^{m-2}-1}{2^{m-1}}}\\&-2^{m-2}q^{\frac{2^{m-1}-1}{2^{m}}}i\sqrt{2(q^{\frac1{2^{m-1}}}+C_{m-1}\big)}\,\Bigr)^n\biggl.\biggr]. \end{aligned}
\end{align*}
The integers $C_r$ and $|D_r|$ are uniquely determined by~\eqref{eq21}.
\end{theorem}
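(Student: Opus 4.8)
The plan is to follow the proof of Theorem~\ref{t3} almost verbatim, the only structural novelty being that $2^m\parallel(q-1)$ rather than $2^{m+1}\mid(q-1)$. Starting from \eqref{eq5}, I would use Corollary~\ref{c1} to express $S_0,\dots,S_{m-2}$ and $S_{m-1}+S_m$ through the sums $W_{r,t}(1)$, and then evaluate each $W_{r,t}(1)$ via Lemma~\ref{l14}. Feeding Lemma~\ref{l14} requires three inputs: the value $G(\eta)=\pm q^{1/2}$ from Lemma~\ref{l3} (here $p\equiv 1\pmod 4$ since $p\equiv 5\pmod 8$, and $2^{m-2}\parallel s$, so $s$ is odd precisely when $m=2$ and even when $m\ge 3$); the combinations $G(\lambda^{2^{m-r}})\pm G(\bar\lambda^{2^{m-r}})$ for $2\le r\le m-1$ from Lemma~\ref{l17}; and the combinations $G(\lambda)\pm G(\bar\lambda)$ from Lemma~\ref{l18}. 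The use of Lemma~\ref{l18} is exactly where the hypothesis $2^m\parallel(q-1)$ enters, since Lemma~\ref{l17} is not applicable at $r=m$. The case $m=2$ I would dispose of first and separately: then $s$ is odd, only $S_0,S_1,S_2$ appear, and each is read off from $G(\eta)=q^{1/2}$ together with $W_{2,0}(1)=i(G(\lambda)-G(\bar\lambda))$ and $W_{2,2}(1)=G(\lambda)+G(\bar\lambda)$ evaluated by Lemma~\ref{l18}; substitution into \eqref{eq5} gives the asserted four-term formula.

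For $m\ge 3$, the terms $S_0$ and $S_1,\dots,S_{m-3}$ come out exactly as in Theorem~\ref{t3}: for $0\le t\le m-3$, Lemma~\ref{l14} annihilates $W_{r,t}(1)$ whenever $r>t+2$, so that $\sum_{r=1}^{t+1}W_{r,t}(1)=-q^{1/2}+\sum_{r=2}^{t}2^{r-1}C_rq^{(2^{r-1}-1)/2^r}-2^{t}C_{t+1}q^{(2^t-1)/2^{t+1}}$ (the sign change at $r=t+1$ being the corresponding case of Lemma~\ref{l14}, and every $C_r$ here coming from the first alternative of Lemma~\ref{l17} because $r+2\le m$), while $W_{t+2,t}(1)=\pm 2^{t+1}D_{t+2}q^{(2^{t+1}-1)/2^{t+2}}$. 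The genuinely new parts are $S_{m-2}$ and $S_{m-1}+S_m$, which involve the character $\lambda^2$ of order $2^{m-1}$ and the character $\lambda$ of order $2^m$: for $\lambda^2$ one must use the second alternative of Lemma~\ref{l17} (the case $r=m-1$, where $2^{(m-1)+1}=2^m\parallel(q-1)$), which introduces a factor $(-1)^{m-2}$; for $\lambda$ one uses Lemma~\ref{l18}. Collecting all the $W_{r,t}(1)$ and substituting into \eqref{eq5} produces the stated expression; one then checks that $m=3$ is not an exceptional case, the empty sums there collapsing correctly.

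The step I expect to be the main obstacle is the sign bookkeeping at $r=m-1$ and $r=m$. The factor $(-1)^{m-2}$ from Lemma~\ref{l17} at $r=m-1$, combined with the Lemma~\ref{l14} sign (which is $+1$ when $t=m$, that is, inside $S_{m-1}+S_m$, and $-1$ when $t=m-2$, that is, inside $S_{m-2}$), makes the base of $S_{m-1}+S_m$ carry the term $+(-1)^m 2^{m-2}C_{m-1}q^{(2^{m-2}-1)/2^{m-1}}$ and the base of $S_{m-2}$ carry $-(-1)^m 2^{m-2}C_{m-1}q^{(2^{m-2}-1)/2^{m-1}}$; at the same time Lemma~\ref{l18} gives $G(\lambda)+G(\bar\lambda)$ with radicand $2(q^{1/2^{m-1}}-(-1)^mC_{m-1})$ and $G(\lambda)-G(\bar\lambda)$ with radicand $2(q^{1/2^{m-1}}+(-1)^mC_{m-1})$. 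These two parity dependencies cancel: writing $b_{\pm}$ for the base with coefficient $\pm 2^{m-2}C_{m-1}q^{(2^{m-2}-1)/2^{m-1}}$ and $X_{\mp}=2^{m-2}q^{(2^{m-1}-1)/2^{m}}i\sqrt{2(q^{1/2^{m-1}}\mp C_{m-1})}$, the four-element multiset $\{b_{+}+X_{-},\,b_{+}-X_{-},\,b_{-}+X_{+},\,b_{-}-X_{+}\}$ is the same whether $m$ is even or odd, and it is precisely the block of four exceptional summands in the statement. Finally, the signs left undetermined by Lemmas~\ref{l17} and \ref{l18}, namely those of the $D_r$ and of the square roots, are immaterial: each occurs only inside an expression of the form $(\beta+X)^n+(\beta-X)^n$, which is unchanged under $X\mapsto -X$.
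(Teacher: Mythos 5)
Your proposal is correct and follows essentially the same route as the paper: reduce via \eqref{eq5} and Corollary~\ref{c1} to the sums $W_{r,t}(1)$, evaluate them by Lemma~\ref{l14} together with Lemma~\ref{l17} for $2\le r\le m-1$ and Lemma~\ref{l18} at $r=m$, treat $m=2$ separately, reuse \eqref{eq23}--\eqref{eq24} for $S_0,\dots,S_{m-3}$, and absorb the $(-1)^m$ factors by combining $S_{m-2}$ with $S_{m-1}+S_m$ (your multiset argument is exactly the paper's ``examine $m$ odd and $m$ even separately'' step), with the undetermined $\pm$ signs harmless because each enters only through $(\beta+X)^n+(\beta-X)^n$.
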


\begin{proof}
Since $2^{m-2}\parallel s$, we conclude that
\begin{align*}
W_{1,m}(1)&=\begin{cases}
q^{1/2}&\text{if $m=2$,}\\
-q^{1/2}&\text{if $m\ge 3$,}
\end{cases}\\
W_{r,m}(1)&=2^{r-1}C_rq^{(2^{r-1}-1)/2^r},\quad 2\le r\le m-2,\quad m\ge 4,\\
W_{m-1,m}(1)&=(-1)^m\cdot 2^{m-2}C_{m-1}q^{(2^{m-2}-1)/2^{m-1}},\quad m\ge 3,\\
W_{m,m}(1)&=\pm 2^{m-2}q^{(2^{m-1}-1)/2^m}i\sqrt{2(q^{1/2^{m-1}}-(-1)^m C_{m-1})}\,.
\end{align*}
Therefore,
\begin{equation}
\label{eq25}
S_{m-1}+S_m=\Bigl(q^{\frac12}+q^{\frac14}i\sqrt{2(q^{\frac12}-C_1)}\,\Bigr)^n+\Bigl(q^{\frac12}-q^{\frac14}i\sqrt{2(q^{\frac12}-C_1)}\,\Bigr)^n
\end{equation}
if $m=2$, and
\begin{align}
&S_{m-1}+S_m\hskip-35pt&=
\Bigl(-q^{\frac 12}&+\sum_{r=2}^{m-2}2^{r-1}C_r q^{\frac{2^{r-1}-1}{2^{r}}}+(-1)^m\cdot 2^{m-2}C_{m-1} q^{\frac{2^{m-2}-1}{2^{m-1}}}\notag\\ 
&&&+2^{m-2}q^{\frac{2^{m-1}-1}{2^{m}}}i\sqrt{2(q^{\frac1{2^{m-1}}}-(-1)^m C_{m-1})}\,\Bigr)^n\notag\\
&&+\,\Bigl(-q^{\frac 12}&+\sum_{r=2}^{m-2}2^{r-1}C_r q^{\frac{2^{r-1}-1}{2^{r}}}+(-1)^m\cdot2^{m-2}C_{m-1} q^{\frac{2^{m-2}-1}{2^{m-1}}}\notag\\
&&&-2^{m-2}q^{\frac{2^{m-1}-1}{2^{m}}}i\sqrt{2(q^{\frac1{2^{m-1}}}-(-1)^mC_{m-1}\big)}\,\Bigr)^n\label{eq26}
\end{align}
if $m\ge 3$. Further,
\begin{align*}
W_{1,m-2}(1)&=-q^{1/2},\\
W_{r,m-2}(1)&=2^{r-1}C_rq^{(2^{r-1}-1)/2^r},\quad 2\le r\le m-2,\quad m\ge 4,\\
W_{m-1,m-2}(1)&=-(-1)^m\cdot 2^{m-2}C_{m-1}q^{(2^{m-2}-1)/2^{m-1}},\quad m\ge 3,\\
W_{m,m-2}(1)&=\pm 2^{m-2}q^{(2^{m-1}-1)/2^m}i\sqrt{2(q^{1/2^{m-1}}+(-1)^m C_{m-1})}\,.
\end{align*}
Hence
\begin{equation}
\label{eq27}
S_{m-2}=\Bigl(-q^{\frac12}+q^{\frac14}i\sqrt{2(q^{\frac12}+C_1)}\,\Bigr)^n+\Bigl(-q^{\frac12}-q^{\frac14}i\sqrt{2(q^{\frac12}+C_1)}\,\Bigr)^n
\end{equation}
if $m=2$, and
\begin{align}
&\,\,\,\,\,\,S_{m-2}\hskip-35pt&=
\Bigl(-q^{\frac 12}&+\sum_{r=2}^{m-2}2^{r-1}C_r q^{\frac{2^{r-1}-1}{2^{r}}}-(-1)^m\cdot 2^{m-2}C_{m-1} q^{\frac{2^{m-2}-1}{2^{m-1}}}\notag\\ 
&&&+2^{m-2}q^{\frac{2^{m-1}-1}{2^{m}}}i\sqrt{2(q^{\frac1{2^{m-1}}}+(-1)^m C_{m-1})}\,\Bigr)^n\notag\\
&&+\,\Bigl(-q^{\frac 12}&+\sum_{r=2}^{m-2}2^{r-1}C_r q^{\frac{2^{r-1}-1}{2^{r}}}-(-1)^m\cdot2^{m-2}C_{m-1} q^{\frac{2^{m-2}-1}{2^{m-1}}}\notag\\
&&&-2^{m-2}q^{\frac{2^{m-1}-1}{2^{m}}}i\sqrt{2(q^{\frac1{2^{m-1}}}+(-1)^mC_{m-1}\big)}\,\Bigr)^n\label{eq28}
\end{align}
if $m\ge 3$. By combining \eqref{eq26} and \eqref{eq28} and examining the two cases $m$ odd and $m$ even separately, we infer that for $m\ge 3$,
\begin{align*}
&S_{m-2}+S_{m-1}+S_m\hskip-20pt&=
\Bigl(-q^{\frac 12}&+\sum_{r=2}^{m-2}2^{r-1}C_r q^{\frac{2^{r-1}-1}{2^{r}}}+2^{m-2}C_{m-1} q^{\frac{2^{m-2}-1}{2^{m-1}}}\\ 
&&&+2^{m-2}q^{\frac{2^{m-1}-1}{2^{m}}}i\sqrt{2(q^{\frac1{2^{m-1}}}-C_{m-1})}\,\Bigr)^n\\
&&+\,\Bigl(-q^{\frac 12}&+\sum_{r=2}^{m-2}2^{r-1}C_r q^{\frac{2^{r-1}-1}{2^{r}}}+2^{m-2}C_{m-1} q^{\frac{2^{m-2}-1}{2^{m-1}}}\\
&&&-2^{m-2}q^{\frac{2^{m-1}-1}{2^{m}}}i\sqrt{2(q^{\frac1{2^{m-1}}}-C_{m-1}\big)}\,\Bigr)^n\\
&&+\Bigl(-q^{\frac 12}&+\sum_{r=2}^{m-2}2^{r-1}C_r q^{\frac{2^{r-1}-1}{2^{r}}}-2^{m-2}C_{m-1} q^{\frac{2^{m-2}-1}{2^{m-1}}}\\ 
&&&+2^{m-2}q^{\frac{2^{m-1}-1}{2^{m}}}i\sqrt{2(q^{\frac1{2^{m-1}}}+C_{m-1})}\,\Bigr)^n
\end{align*}
\begin{align}
&&+\,\Bigl(-q^{\frac 12}&+\sum_{r=2}^{m-2}2^{r-1}C_r q^{\frac{2^{r-1}-1}{2^{r}}}-2^{m-2}C_{m-1} q^{\frac{2^{m-2}-1}{2^{m-1}}}\notag\\
&&&-2^{m-2}q^{\frac{2^{m-1}-1}{2^{m}}}i\sqrt{2(q^{\frac1{2^{m-1}}}+C_{m-1}\big)}\,\Bigr)^n\label{eq29}.
\end{align}
It is readily seen that for $m\ge 3$ the sums $S_0,\dots,S_{m-3}$ are determined by \eqref{eq23} and \eqref{eq24}. Substituting \eqref{eq23}, \eqref{eq24},  \eqref{eq25}, \eqref{eq27},  \eqref{eq29} into \eqref{eq5}, we deduce the desired result.
\end{proof}

\section{Numerical results}
\label{s5}

The theoretical results of this paper are supported by numerical experiments. Some
numerical results are listed in Table~1.

\begin{table}[t]
\caption{Numerical results.}
\begin{tabularx}{\textwidth}{XXXXr|XXXXr}
\hline
$p$&$s$&$m$&$n$&$N\quad\quad$&$p$&$s$&$m$&$n$&$N\quad\quad$\\
\hline
3&4&3&3&7041&5&2&2&5&498625\\
3&4&3&4&1130241&5&2&3&4&12289\\
3&4&3&5&41304321&5&2&3&5&129025\\
3&4&4&3&20481&5&4&2&3&416833\\
3&4&4&4&81921&5&4&2&4&250892929\\
3&4&4&5&126033921&5&4&3&3&94849\\
3&8&3&3&30805761&5&4&3&4&304182529\\
3&8&4&3&42298881&5&4&4&3&319489\\
3&8&5&3&167936001&5&4&4&4&369328129\\
\hline
\end{tabularx}
\end{table}

\section{Concluding remarks}
\label{s6}

The results of the previous sections can be applied to some other diagonal equations. As before, $2^m\mid(q-1)$, $N=N[x_1^{2^m}+\dots+x_n^{2^m}=0]$ and $\lambda$ denotes a character of order $2^m$ on $\mathbb F_q$.

Granville, Li and Sun~\cite{GLS} have shown that
$$
N[a_1^{}x_1^{d_1}+\dots+a_n^{}x_n^{d_n}=0]=N[a_1^{}x_1^{w_1}+\dots+a_n^{}x_n^{w_n}=0],
$$
where $w_j=\gcd(d_j,\text{\rm lcm}(d_1,\dots,d_{j-1},d_{j+1},\dots,d_n))$, $1\le j\le n$. Thus, if $h_1,\dots,h_n$ are pairwise coprime positive integers with $2^mh_1\cdots h_n\mid(q-1)$, then
$$
N[x_1^{2^mh_1}+\dots+x_n^{2^m h_n}=0]=N[x_1^{2^m}+\dots+x_n^{2^m}=0]=N,
$$
and so our formulas are valid for some more general equations.

Let $u_1>2,\dots,u_t>2$ be pairwise coprime odd positive integers with $u_j\mid(q-1)$ for all $j$. Assume in addition that for each $j\in\{1,\dots,t\}$ there exists a positive integer $\ell_j$ such that $u_j\mid(p^{\ell_j}+1)$, with $\ell_j$ chosen minimal. It follows from \cite[Theorem~11.6.2]{BEW} that $2\ell_j\mid s$ for all $j$. Cao and Sun~\cite{CS} obtained the factorization formulas for the number of solutions to diagonal equations. Combining their result with \cite[Corollary~4]{W1}, we infer that
\begin{align*}
N&[x_1^{2^m}+\dots+x_n^{2^m}+y_{11}^{u_1}+\dots+y_{1n_1}^{u_1}+\dots+y_{t1}^{u_t}+\dots+y_{tn_t}^{u_t}=0]\\
&=q^{n+n_1+\dots+n_t-1}+(-1)^{\sum_{j=1}^t ((s/\ell_j)-1)n_j}(N-q^{n-1})q^{(n_1+\dots+n_t)/2}\\
&\hphantom{=}\times\prod_{j=1}^t \frac{(u_j-1)^{n_j}+(-1)^{n_j}(u_j-1)}{u_j}.
\end{align*}

Now let $k\ge 2$ be even and $b_1,\dots,b_k\in\mathbb F_q^*$. Lemma~\ref{l1} yields
\begin{align*}
N&[x_1^{2^m}+\dots+x_n^{2^m}+b_1^{}y_1^2+\dots+b_k^{}y_k^2=0]\\
&=q^{n+k-1}+\frac{q-1}q\sum_{\substack{1\le j_1,\dots,j_n\le 2^m-1\\j_1+\dots+j_n\equiv 0\!\!\pmod{2^m}}}\eta(b_1\cdots b_k)G(\eta)^k G(\lambda^{j_1})\cdots G(\lambda^{j_n}).
\end{align*}
Since, by Lemma~\ref{l2}(a), $G(\eta)^2=\eta(-1)q$, we deduce that
$$
N[x_1^{2^m}+\dots+x_n^{2^m}+b_1^{}y_1^2+\dots+b_k^{}y_k^2=0]=q^{n+k-1}+\eta((-1)^{k/2}b_1\cdots b_k)q^{k/2}(N-q^{n-1}).
$$
In particular,
\begin{align*}
N&[x_1^{2^m}+x_2^{2^m}+b_1^{}y_1^2+\dots+b_k^{}y_k^2=0]\\
&=q^{k+1}+\eta((-1)^{k/2}b_1\cdots b_k)q^{k/2}(q-1)\cdot\begin{cases}
2^m-1&\text{if $2^{m+1}\mid(q-1)$,}\\
-1&\text{if $2^m\parallel(q-1)$,}
\end{cases}
\end{align*}
which is a special case of a result of Sun~\cite{S}.

Finally, we notice that in the more general case where $f$ is a nondegenerate quadratic form over $\mathbb F_q$ in an even number $k$ of variables, we have
$$
N[x_1^{2^m}+\dots+x_n^{2^m}+f(y_1,\dots,y_k)=0]=q^{n+k-1}+\eta((-1)^{k/2}\Delta)q^{k/2}(N-q^{n-1}),
$$
where $\Delta$ denotes the determinant of $f$.

\section*{Acknowledgment}
The author thanks the referee for a careful reading of the manuscript and  helpful suggestions.

\end{document}